\numberwithin{equation}{section}
\numberwithin{figure}{section}
\renewcommand{\subsection}{\hspace{-\parindent}\refstepcounter{subsection}{\bf \arabic{section}\alph{subsection}. }}
\newenvironment{nouppercase}{%
  \renewcommand{\uppercasenonmath}[1]{}}{}
\theoremstyle{plain}
\newtheorem{thm}{Theorem}[section]
\newtheorem{theorem}[thm]{Theorem}
\newtheorem{assumption}[thm]{Assumption}
\newtheorem{definition}[thm]{Definition}
\newtheorem{remark}[thm]{Remark}
\newtheorem{proposition}[thm]{Proposition}
\newtheorem{example}[thm]{Example}
\newtheorem{non-example}[thm]{Non-example}
\newtheorem{lemma}[thm]{Lemma}
\newtheorem*{claim*}{Claim} 
\newtheorem*{lemma*}{Lemma}
\newtheorem*{theorem*}{Theorem}
\newtheorem*{conjecture*}{Conjecture}
\newcommand{\bC}{{\mathbb C}}
\newcommand{\bF}{{\mathbb F}}
\newcommand{\bQ}{{\mathbb Q}}
\newcommand{\bR}{{\mathbb R}}
\newcommand{\bZ}{{\mathbb Z}}
\newcommand{\scrC}{\EuScript C}
\newcommand{\scrM}{\EuScript M}
\newcommand{\iso}{\cong}
\newcommand{\htp}{\simeq}
\newcommand{\smooth}{C^\infty}
\title[Quantum Steenrod]{\Large\larger\rm Covariant constancy of quantum Steenrod operations}
\author{Paul Seidel, Nicholas Wilkins}
\date{\today. This is an unfinished draft.}
\begin{document}
\begin{nouppercase}
\maketitle
\end{nouppercase}

\begin{abstract}
We prove a relationship between quantum Steenrod operations and the quantum connection. In particular there are operations extending the quantum Steenrod power operations that, when viewed as endomorphisms of equivariant quantum cohomology, are covariantly constant. We demonstrate how this property is used in computations of examples.
\end{abstract}

\section{Introduction}
Quantum Steenrod operations, originally introduced by Fukaya \cite{fukaya93b}, have recently appeared in a variety of contexts: their properties have been explored in \cite{wilkins18} (which also contains the first nontrivial computations); they can be used to study arithmetic aspects of mirror symmetry \cite{seidel19}; and in Hamiltonian dynamics, they are relevant for the existence of pseudo-rotations \cite{shelukhin19,cineli-ginzburg-gurel19,shelukhin19b}. Nevertheless, computing quantum Steenrod operations remains a challenging problem in all but the simplest cases. Using methods similar to \cite{wilkins18}, this paper establishes a relation between quantum Steenrod operations and the quantum connection. As a consequence, the contribution of rational curves of low degree (very roughly speaking, of degree $<p$ if one is interested in quantum Steenrod operations with $\bF_p$-coefficients) can be computed using only ordinary Steenrod operations and Gromov-Witten invariants. This is consonant with other indications that the geometrically most interesting part of quantum Steenrod operations may come from $p$-fold covered curves. Even though our method does not reach that part, it yields interesting results in many examples (some are carried out here, and there are more in \cite{seidel19}).

\subsection{}
Throughout this paper, $M$ is a closed symplectic manifold which is weakly monotone \cite{hofer-salamon95} (in \cite[Definition 6.4.1]{mcduff-salamon}, this is called semi-positive). Fix an arbitrary coefficient field $\bF$. The associated Novikov ring $\Lambda$ is the ring of series
\begin{equation} \label{eq:novikov}
\gamma = \textstyle\sum_A c_A q^A,
\end{equation}
where the exponents are $A \in H_2^{\mathit{sphere}}(M;\bZ) = \mathrm{im}(\pi_2(M) \rightarrow H_2(M;\bZ))$ such that either $A = 0$ or $\int_A \omega_M > 0$; and among those $A$ such that $\int_A \omega_M$ is bounded by a given constant, only finitely many $c_A$ may be nonzero. We think of this as a graded ring, where $|q^A| = 2 c_1(A)$ (the notation being that $c_1(A)$ is the pairing between $c_1(M)$ and $A$). Write $I_{\mathit{max}} \subset \Lambda$ for the ideal generated by $q^A$ for nonzero $A$, so that $\Lambda/I_{\mathit{max}} = \bF$. 

For each $a \in H^2(M;\bZ)$ there is an $\bF$-linear differentiation operation $\partial_a: \Lambda \rightarrow \Lambda$, 
\begin{equation} \label{eq:derivative}
\partial_a  q^A = (a \cdot A)\, q^A.
\end{equation}
Write $I_{\mathit{diff}} \subset I_{\mathit{max}}$ for the ideal generated by $q^A$, where $A \neq 0$ lies in the kernel of the map $H_2^{\mathit{sphere}}(M;\bZ) \hookrightarrow H_2(M;\bZ) \twoheadrightarrow \mathit{Hom}(H^2(M;\bZ),\bF)$. In other words, the generators are precisely those nontrivial monomials whose derivatives \eqref{eq:derivative} are zero. (If $\bF$ is of characteristic zero and $H_*^{\mathit{sphere}}(M;\bZ)$ is torsion-free, then $I_{\mathit{diff}} = 0$; but that's not the case we'll be interested in.)

\begin{remark}
Clearly, $\partial_a$ only depends on $a \otimes 1 \in H^2(M;\bZ) \otimes \bF$. One could define such operations for all elements in $H^2(M;\bF)$, and prove a version of our results in that context. We have refrained from doing so, since it adds a technical wrinkle (having to represent classes in $H^2(M;\bF)$ geometrically) without giving any striking additional applications.
\end{remark}

\subsection{}
We will exclusively consider genus zero Gromov-Witten invariants. The three-pointed Gromov-Witten invariant in a class $A \in H_2^{\mathit{sphere}}(M;\bZ)$ can be written as a bilinear operation 
\begin{equation}
\begin{aligned}
& \ast_A: H^*(M;\bF)^{\otimes 2} \longrightarrow H^{*-2c_1(A)}(M;\bF), \\
& \int_M (c_1 \ast_A c_2)\, c_3 = \langle c_1,c_2,c_3 \rangle_A.
\end{aligned}
\end{equation}
One extends this to $H^*(M;\Lambda)$, and then packages all the $\ast_A$ into the small quantum product
\begin{equation}
\gamma_1 \ast \gamma_2 = \sum_A (\gamma_1 \ast_A \gamma_2)\, q^A.
\end{equation}
Let $t$ be another formal variable, of degree $2$. The quantum connection on $H^*(M;\Lambda)[[t]]$ consists of the operations
\begin{equation} \label{eq:nabla}
\nabla_a \gamma = t\partial_a \gamma + a \ast \gamma,
\end{equation}
where $\ast$ has been extended $t$-linearly. By the divisor axiom in Gromov-Witten theory, we have that for any $a_1,a_2 \in H^2(M;\bZ)$ and $c_1,c_2 \in H^*(M;\bF)$,
\begin{equation}
 (a_1 \cdot A) \int_M (a_2 \ast_A c_1)\, c_2 = \langle a_1, a_2, c_1, c_2 \rangle_{A} = (a_2 \cdot A) \int_M (a_1 \ast_A c_1)\, c_2. %the c_1 here was c_2
\end{equation}
This implies that the operations \eqref{eq:nabla} for different $a$ commute: the connection is flat. % Moreover, it is compatible with the intersection pairing in a suitable sense. Namely, write $\overline{\gamma}$ for the outcome of substituting $t \mapsto -t$ in $\gamma$. Then, for $a \in H^2(M;\bZ)$ and $\gamma_1,\gamma_2\in H^*(M;\Lambda)[[t]]$,
%\begin{equation}
%t  \partial_a \int_M \bar{\gamma}_1\, \gamma_2 = 
% -\int_M \overline{\nabla_a \gamma_1}\, \gamma_2 + \int_M %\bar{\gamma}_1 \, \nabla_a \gamma_2.
%\end{equation}

We will consider endomorphisms $\Sigma$ of $H^*(M;\Lambda)[[t]]$ which are $\Lambda[[t]]$-linear and covariantly constant, which means that they satisfy
\begin{equation} \label{eq:covariantly-constant}
\nabla_a \Sigma - \Sigma \nabla_a = 0.
\end{equation}
This is a system of linear first order differential equations. By looking at the equations for each $q^A$ coefficient of $\Sigma$, one sees that:

\begin{lemma} \label{th:up-to-order-p}
For covariantly constant endomorphisms, the constant term determines the behaviour modulo $I_{\mathit{diff}}$. More formally, if $\Sigma$ satisfies \eqref{eq:covariantly-constant}, then we have
%**REF(1)** added ``using \eqref{eq:covariantly-constant} we obtain"
\begin{equation}
\Sigma \in \mathit{End}(H^*(M;\bF)) \otimes I_{\mathit{max}}[[t]] 
 \;\; \Longrightarrow \;\; 
\Sigma \in \mathit{End}(H^*(M;\bF)) \otimes I_{\mathit{diff}}[[t]].
\end{equation}
\end{lemma}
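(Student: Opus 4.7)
The strategy is to pass to the quotient $\Lambda' := \Lambda/I_{\mathit{diff}}$ and prove the equivalent statement that $\bar\Sigma := \Sigma \bmod I_{\mathit{diff}}$ vanishes. First observe that $\partial_a$ descends to $\Lambda'$: any generator $q^A$ of $I_{\mathit{diff}}$ has $A \in \ker$, hence $\partial_a q^A = (a \cdot A) q^A = 0$, and by the Leibniz rule $\partial_a$ preserves $I_{\mathit{diff}}$. Consequently $\nabla_a$ descends and $[\nabla_a, \bar\Sigma] = 0$ holds in $\mathit{End}(H^*(M;\bF)) \otimes \Lambda'[[t]]$. Write $\bar\Sigma = \sum_{C \in T} \Sigma_C\, \bar q^C$, where $T := \{C \in H_2^{\mathit{sphere}}(M;\bZ) : q^C \notin I_{\mathit{diff}}\}$; for $C \neq 0$ in $T$ one has $C \notin \ker$ (otherwise $q^C \in I_{\mathit{diff}}$), so some $a \in H^2(M;\bZ)$ satisfies $(a \cdot C) \neq 0$ in $\bF$. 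A useful closure property of $T$: if $A + B = C \in T$ with $A, B$ valid Novikov exponents, then $A, B \in T$, since $A \notin T$ would yield $q^C = q^A q^B \in I_{\mathit{diff}}$.

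The main argument is induction on $\omega(C)$, proving $\Sigma_C = 0$ for every $C \in T$. The base $C = 0$ is the hypothesis. For the inductive step, extract the $\bar q^C$-coefficient of $[\nabla_a, \bar\Sigma]\,c = 0$ applied to a constant $c \in H^*(M;\bF)$. By the closure property above, the sum over $A + B = C$ is automatically restricted to $A, B \in T$:
\[
t(a \cdot C)\,\Sigma_C(c) + \sum_{A+B=C,\, A, B \in T} \bigl[(a \ast_B \Sigma_A)(c) - \Sigma_A(a \ast_B c)\bigr] = 0.
\]
The $A = 0$ summand vanishes since $\Sigma_0 = 0$; the $A = C$, $B = 0$ summand equals $[L_a, \Sigma_C](c)$; every remaining summand has $A \in T$ with $0 < \omega(A) < \omega(C)$ and so vanishes by the inductive hypothesis. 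The equation collapses to
\[
t (a \cdot C)\, \Sigma_C + [L_a, \Sigma_C] = 0 \quad \text{in } \mathit{End}(H^*(M;\bF))[[t]].
\]

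Now pick $a$ with $(a \cdot C) \neq 0$, expand $\Sigma_C = \sum_n t^n \Sigma_C^{(n)}$, and compare $t^{n+1}$-coefficients to obtain the descending recursion
\[
\Sigma_C^{(n)} = -(a \cdot C)^{-1}\,[L_a, \Sigma_C^{(n+1)}].
\]
Because $\Sigma$ has fixed cohomological degree and $\mathit{End}(H^*(M;\bF))$ has bounded degrees, only finitely many $\Sigma_C^{(n)}$ are nonzero; starting the recursion from the top $t$-degree $N_C$, where $\Sigma_C^{(N_C + 1)} = 0$, forces $\Sigma_C^{(n)} = 0$ for every $n$, hence $\Sigma_C = 0$.

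The main delicate point is the bookkeeping around the quotient: checking the closure property of $T$ and confirming that after reducing modulo $I_{\mathit{diff}}$ the $\bar q^C$-coefficient of $[L_{a \ast}, \bar\Sigma]$ genuinely only involves $(A, B) \in T \times T$. This is precisely what kills the contributions from classes $A \in \ker \setminus \{0\}$ that would otherwise persist in $\Lambda$ and force an invocation of the divisor axiom to handle them; once that bookkeeping is in place, the remaining descending $t$-recursion is transparent.
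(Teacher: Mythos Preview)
Your argument is correct and is precisely the elaboration the paper's one-line justification (``by looking at the equations for each $q^A$ coefficient'') points to: extract the $q^C$-coefficient of the covariant constancy equation and induct on energy. One small remark: you invoke a fixed cohomological degree on $\Sigma$ to terminate the $t$-recursion, which is not literally part of the lemma statement (though it holds in the intended application); you can dispense with it by observing that $L_a$ is nilpotent on $H^*(M;\bF)$, hence $\mathrm{ad}_{L_a}$ is nilpotent on $\mathit{End}(H^*(M;\bF))$, so iterating your recursion gives $\Sigma_C^{(n)} = (-(a\cdot C))^{-k}\,\mathrm{ad}_{L_a}^k(\Sigma_C^{(n+k)}) = 0$ for $k$ large, with no finiteness of the $t$-expansion needed.
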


\subsection{}
From now on, we restrict to coefficient fields $\bF = \bF_p$, for a prime $p$. Our arguments involve $(\bZ/p)$-equivariant cohomology with $\bF_p$-coefficients. For a point, that is
\begin{equation} \label{eq:equivariant-ring}
H^*_{\bZ/p}(\mathit{point};\bF_p) = H^*(B\bZ/p;\bF_p) = \bF_p[[t,\theta]], \;\; |t| = 2,\, |\theta| = 1.
\end{equation}
The notation requires some explanation. For $p = 2$, we have $\theta^2 = t$, so $\bF_2[[t,\theta]]$ is actually a ring of power series in a single variable $\theta$. For $p > 2$, we have $t \theta = \theta t$ and $\theta^2 = 0$, so that $\bF_p[[t,\theta]]$ is a ring of power series in two supercommuting variables. 
% theta is dual to the obvious generator of H_1(BF_p) = Z/p

For any $A \in H_2^{\mathit{sphere}}(M;\bZ)$ and any class $b \in H^*(M;\bF_p)$, one can use $(\bZ/p)$-equivariant Gromov-Witten theory to define an operation
\begin{equation} \label{eq:q-sigma-a}
Q\Sigma_{b,A}: H^*(M;\bF_p) \longrightarrow (H^*(M;\bF_p)[[t,\theta]])^{*+p|b|-2c_1(A)}.
\end{equation}
For the trivial class $A = 0$, this is a form of the classical Steenrod operation $\mathit{St}$, more precisely
\begin{equation} \label{eq:classical-xi}
Q\Sigma_{b,0}(c) = \mathit{St}(b) c.
\end{equation}

\begin{remark}
Our notational and sign conventions follow \cite{seidel19} (except that we suppress the prime $p$), which differ from the classical conventions for Steenrod operations. In particular, for $p>2$,
\begin{equation} \label{eq:trivial-steenrod}
\mathit{St}(b) = (-1)^{\frac{|b|(|b|-1)}{2} \frac{p-1}{2}} \big({\textstyle\frac{p-1}{2}!}\big)^{|b|} t^{\frac{p-1}{2} |b|}\, b + \cdots,
\end{equation}
where $\cdots$ is the part involving cohomology classes of degree $>|b|$. For $|b|$ even, this simplifies to
\begin{equation} \label{eq:trivial-steenrod-2}
\mathit{St}(b) = (-1)^{\frac{|b|}{2}} t^{\frac{p-1}{2}|b|} b + \cdots
\end{equation}
At the other extreme, setting $t = \theta = 0$ in $\mathit{St}(b)$ still yields the $p$-fold (cup) power $b^p$. The Cartan relation says that
\begin{equation} \label{eq:cartan}
\mathit{St}(\tilde{b}) \,\mathit{St}(b) = (-1)^{|b|\,|\tilde{b}|\,\frac{p(p-1)}{2}} \mathit{St}(\tilde{b} b).
\end{equation}
Note that many coefficients of $\mathit{St}(b)$ vanish, because this operation comes from the cohomology of the symmetric group. Concretely, if $|b|$ is even, all the potentially nonzero terms in $\mathit{St}(b)$ are of the form $t^{k(p-1)}$ or $t^{k(p-1)-1}\theta$; and if $|b|$ is odd, of the form $t^{(k+1/2)(p-1)}$ or $t^{(k+1/2)(p-1)-1}\theta$. That is no longer true for quantum operations.
\end{remark}

As usual, one adds up \eqref{eq:q-sigma-a} over all $A$ with weights $q^A$. The outcome is denoted by
\begin{equation} \label{eq:sigma}
Q\Sigma_b: H^*(M;\bF_p) \longrightarrow (H^*(M;\Lambda)[[t,\theta]])^{*+p|b|}.
\end{equation}
The non-equivariant ($t = \theta = 0$) part is the $p$-fold quantum product with $b$:
\begin{equation} \label{eq:non-equivariant}
Q\Sigma_b(c) = \overbrace{b \ast \cdots \ast b}^p \ast c + (\text{\it terms involving $t,\theta$}).
\end{equation}
The case $b = 1$ is trivial:
\begin{equation} \label{eq:steenrod-zero}
Q\Sigma_1 = \mathit{id}.
\end{equation}
The relation with the more standard formulation of the quantum Steenrod operation is that
\begin{equation} \label{eq:steenrod-sigma}
\mathit{QSt}(b) = Q\Sigma_b(1).
\end{equation}

It is convenient to formally extend \eqref{eq:sigma}. First, turn it into an endomorphism of $H^*(M;\Lambda)[[t,\theta]]$, linearly in the variables $q^A$ and $(t,\theta)$ (with appropriate Koszul signs). %These endomorphisms are compatible with the intersection pairing, in %the sense that
%\begin{equation} \label{eq:t-symmetry}
%\textstyle
%\int_M \overline{\gamma_1}\, Q\Sigma_b(\gamma_2)  = (-1)^{(|\gamma_1|+ \frac{p(p-1)}{2})|b|} \int_M \overline{Q\Sigma_b(\gamma_1)}\, \gamma_2, 
%
%c_1\,\Sigma_b(c_2) = (-1)^{|c_2|}\left(\int_M c_2\Sigma_b(c_1) %\right)_{
%\end{equation}
%where $\overline{\cdots}$ now stands for the substitution $t \mapsto -t$, $\theta \mapsto -\theta$ (in terms of \eqref{eq:equivariant-ring}, this is the map induced by $-1: \bZ/p \rightarrow \bZ/p$). 
Next, extend the $b$-variable to $\beta \in H^*(M;\Lambda)$, by setting
\begin{equation} \label{eq:b-variable-extension}
\textstyle Q\Sigma_{\beta} = \sum_A q^{pA} \,Q\Sigma_{b_A} \quad \text{for $\beta = \sum_A b_A q^A$.}
\end{equation}
Then, the composition of these operations is described by
\begin{equation} \label{eq:compose-sigma}
Q\Sigma_{\tilde{b}} \circ Q\Sigma_{b} = (-1)^{|b|\,|\tilde{b}|\,\frac{p(p-1)}{2}} Q\Sigma_{\tilde{b}\, \ast\, b}.
\end{equation}
Note that for $b = 1$, \eqref{eq:non-equivariant} implies that $Q\Sigma_1$ is an automorphism of $H^*(M;\Lambda)[[t,\theta]]$, and \eqref{eq:compose-sigma} that it is idempotent. Hence, it must be the identity, so those two properties imply \eqref{eq:steenrod-zero}.
%Using \eqref{eq:steenrod-sigma} and \eqref{eq:compose-sigma}, one can recover $Q\Sigma$ from $\mathit{QSt}$. 

%\begin{remark}
%For $p>2$, the signs in the formulae above deserve some discussion. 
%If the classical Steenrod operation is written in terms of $(t,\theta)$, many coefficients vanish, because this operation comes from the cohomology of the symmetric group. Concretely, if $|b|$ is even, all the potentially nonzero terms in $\mathit{St}(b)$ are of the form $t^{k(p-1)}$ or $t^{k(p-1)-1}\theta$; and if $|b|$ is odd, of the form $t^{(k+1/2)(p-1)}$ or $t^{(k+1/2)(p-1)-1}\theta$. That is no longer true for the quantum version of those operations.
%\end{remark}
%As a consequence, the substitution in \eqref{eq:t-symmetry} acts by
%\begin{equation}
%\overline{\mathit{St}(b)}= (-1)^{\frac{p(p-1)}{2}|b|} \mathit{St}(b).
%
 %\begin{cases} \mathit{St}(b) & |b| \text{ even,} \\
%(-1)^{(p-1)/2} \mathit{St}(b) & |b| \text{ odd.}
%\end{cases}
%\end{equation}
% P^i: H^n -> H^{n+2i(p-1)} = H^{np + (p-1)(2i-n)}, which means t to the power of (n-2i)(p-1)/2
% n even: multiple of (p-1)
% n odd: (p-1) times a half-integer.
%
% degree St(b) = p deg(b), degree St(b)-b = (p-1) deg(b) 
%This explains how \eqref{eq:t-symmetry} can be reconciled with the classical contribution \eqref{eq:classical-xi}. Similarly, in our conventions the classical Cartan relation says that
%\begin{equation}
%\mathit{St}(\tilde{b}) \,\mathit{St}(b) = (-1)^{|b|\,|\tilde{b}|\,\frac{p(p-1)}{2}} \mathit{St}(\tilde{b} b),
%\end{equation}
%and that matches with \eqref{eq:compose-sigma}.
%\end{remark}

\subsection{}
The quantum connection can be extended to $H^*(M;\Lambda)[[t,\theta]]$ by making it $\theta$-linear. Our main result is:
%\begin{proof}
%Write $\Sigma = \sum_{A \neq 0} q^A S_A$. By expanding \eqref{eq:covariantly-constant}, one sees that $S_A \neq 0$ implies that one of the two following holds: (i) $a(A) = 0 \in \bF_p$ for all $a$; or (ii) one can find some $A' \in H^2_{\mathit{sphere}}(M;\bZ)$ with $S_{A'} \neq 0$, such that $0<\int_{A'} \omega_M < \int_A \omega_M$.
%
%Start with some $A$ such that $S_A \neq 0$. If (ii) holds for that $A$, we repeat that argument, replacing $A$ with $A'$. Because $\int_{A'} \omega_M < \int_{A} \omega_M$, the argument must eventually lead to (i). This leads us to write the original element as $A = A_1 + \cdots + A_k$, where $\omega_{A_i} > 0$ and $a(A_k) = 0 \in \bF_p$ for all $a$. But that is exactly the property which defines $q^A \in I_{\mathit{diff}}$.
%\end{proof}

\begin{theorem} \label{th:covariantly-constant}
For any $b \in H^*(M;\bF_p)$, the operation $Q\Sigma_b$ is a covariantly constant endomorphism (of degree $p|b|$), meaning that it satisfies \eqref{eq:covariantly-constant}.
\end{theorem}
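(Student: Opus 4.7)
The plan is to verify \eqref{eq:covariantly-constant} for $\Sigma = Q\Sigma_b$ by constructing a single $1$-parameter $\bZ/p$-equivariant moduli problem whose codimension-one degenerations match the three terms of an expanded form of the identity.

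I first unpack the equation. Using \eqref{eq:nabla}, \eqref{eq:derivative}, the $\Lambda[[t,\theta]]$-linear extension \eqref{eq:b-variable-extension}, and the $\Lambda$-linearity of quantum multiplication by $a$, the identity $\nabla_a Q\Sigma_b - Q\Sigma_b \nabla_a = 0$ reduces (by $\Lambda[[t,\theta]]$-linearity, it suffices to test on $\gamma \in H^*(M;\bF_p)$) to
\begin{equation*}
t \sum_A (a\cdot A)\, q^A\, Q\Sigma_{A,b}(\gamma) \;+\; a \ast Q\Sigma_b(\gamma) \;-\; Q\Sigma_b(a \ast \gamma) \;=\; 0.
\end{equation*}
I aim to match each of the three terms with a distinct geometric contribution.

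Recall that $Q\Sigma_{A,b}$ is defined via a $\bZ/p$-equivariant moduli of stable pseudoholomorphic spheres in class $A$ with input $z_0$, output $z_\infty$, and $p$ further marked points $z_1,\ldots,z_p$ cyclically permuted by the symmetry, each carrying the insertion $b$; after normalization the points can be taken to be $z_0 = 0$, $z_\infty = \infty$, and $z_i$ the $p$-th roots of unity on $\bC P^1$. I introduce one additional marked point $z_{\mathrm{div}}$ carrying the class $a$, giving a $1$-complex-dimensional parameter with $\bZ/p$ acting by rotation (fixed points $z_0, z_\infty$). The compactified moduli has codimension-one degenerations of three types: $z_{\mathrm{div}} \to z_\infty$ (a bubble at the output, producing $a \ast Q\Sigma_b(\gamma)$); $z_{\mathrm{div}} \to z_0$ (a bubble at the input, producing $Q\Sigma_b(a \ast \gamma)$); and $z_{\mathrm{div}} \to z_i$ for some $i \in \{1, \ldots, p\}$, which form a free $\bZ/p$-orbit whose total contribution vanishes in $\bF_p$-coefficients because the resulting bubbles carry no nontrivial equivariant decoration. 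The bulk contribution, from $z_{\mathrm{div}}$ varying freely in the interior, equals $t \sum_A (a \cdot A) q^A Q\Sigma_{A,b}(\gamma)$: the factor $(a \cdot A)$ comes from the classical divisor axiom, while the factor $t$ arises as a degree-$2$ class in $H^*_{\bZ/p}(\bC P^1;\bF_p)$ produced by the rotational action on the parameter $\bC P^1$. Bordism invariance for this $1$-parameter family then yields the stated identity.

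The principal obstacle is the equivariant bookkeeping. Specifically, one must verify (i) that the interior integration really produces the pure monomial $t$ and not some mixture involving $\theta$, and (ii) that the free $\bZ/p$-orbit of $z_i$-strata contributes zero modulo $p$. Both points depend on an analysis of the $\bZ/p$-equivariant cohomology of $\bC P^1$ with rotational action and on the equivariant gluing and transversality framework already developed in \cite{wilkins18}.
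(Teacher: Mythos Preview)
Your outline matches the paper's strategy: add a marked point $z_*$ carrying $a$ to produce an auxiliary operation (the paper calls it $Q\Pi_{a,b}$), then establish the two relations \eqref{eq:pi-relation} and \eqref{eq:pi-divisor} separately and combine them. The divisor identity \eqref{eq:pi-divisor} is indeed the straightforward part (Proposition~\ref{th:pi-divisor}).

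The gap is in your account of \eqref{eq:pi-relation}. You frame it as ``bordism invariance'' relating codimension-one boundary strata of a $1$-parameter family, but the parameter space $S \cong \bC P^1$ for $z_*$ is two-real-dimensional, and the nodal loci $z_* \to z_j$ sit in real codimension two; no cobordism argument directly equates contributions at those loci with an interior integral. The actual mechanism is equivariant localization on $S$: in the $\bZ/p$-equivariant homology of $S$ one has $t\cdot[S] \sim [P_0]-[Q_0]$, which the paper proves by an explicit cell-level computation (Lemma~\ref{eq:cohomology-relations}, equations \eqref{eq:coh-localize-1}--\eqref{eq:coh-localize-2}). Feeding this homology relation into a chain map \eqref{eq:fixed-b-map} built from the moduli spaces over the cells of $S$ yields $t\,Q\Pi_{a,b}$ on one side and, via Lemmas~\ref{th:specialize-2}--\ref{th:specialize-3}, the difference $Q\Sigma_b(a\ast c) - a\ast Q\Sigma_b(c)$ on the other. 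So your ``obstacle (i)'' is not bookkeeping but the entire substance of this step. Correspondingly, the degenerations $z_* \to z_i$ for $i=1,\dots,p$ are irrelevant: those points are not $\bZ/p$-fixed and never enter the localization relation, so no free-orbit cancellation argument is needed (and the one you sketch would in any case have to first explain why such strata should appear in the identity at all).
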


Lemma \ref{th:up-to-order-p} still applies (the presence of the additional $\theta$-variable makes no difference). Hence, the classical part \eqref{eq:classical-xi}, together with the quantum connection, determine $Q\Sigma_b$ modulo $I_{\mathit{diff}}$. 

\begin{remark} \label{th:fundamental-solution}
Covariant constancy also means that $Q\Sigma_b$ is related to the fundamental solution of the quantum differential equation (see e.g.\ \cite{pandharipande96}). To explain this, let's temporarily switch coefficients to $\bQ$, and write $\tilde{\Lambda}$ for the associated Novikov ring. The fundamental solution is a trivialization of the quantum connection,
\begin{equation} \label{eq:fundamental-solution}
\nabla \tilde\Psi = 0,
\end{equation}
whose constant (in the $q$ variables) term is the identity endomorphism. $\tilde\Psi$ is multivalued (has $\log(q^A)$ terms), and is also a series in $t^{-1}$. It is uniquely determined by those conditions, and one can write down an explicit formula in terms of Gromov-Witten invariants with gravitational descendants. Given $\beta \in H^*(M;\bZ)$, write 
\begin{equation}
\tilde\Xi_\beta(\gamma) = \tilde\Psi(\beta \, \tilde\Psi^{-1}(\gamma)).
\end{equation}
By construction, this is a covariantly constant endomorphism, whose constant term is cup-product with $\beta$. It is single-valued; more precisely,
\begin{equation} \label{eq:xi-a}
\tilde\Xi_\beta \in \mathit{End}(H^*(M;\tilde\Lambda))[[t^{-1}]].
\end{equation}
For simplicity, suppose that $H^*(M;\bZ)$ is torsion-free. One can look at the denominators in $\tilde\Xi_\beta$, order by order in the covariant constancy equation. The upshot is that factors of $1/p$ appear for the first time in terms $q^A$, $A \in pH_2^{\mathit{sphere}}(M;\bZ)$. As a consequence, $\tilde\Xi_\beta$ has a well-defined partial reduction mod $p$, which we denote by
\begin{equation} \label{eq:xi-endomorphism}
\Xi_\beta \in \mathit{End}(H^*(M;\Lambda/I_{\mathit{diff}}))[[t^{-1}]],
\end{equation}
and which only depends on $\beta \in H^*(M;\bF_p)$. Let's extend \eqref{eq:xi-endomorphism} linearly to $\beta \in H^*(M;\bF_p)[t,\theta]$, in which case $\Xi_\beta$ can have both positive and negative powers of $t$. The case we are interested in is $\beta = \mathit{St}(b)$. Because of the uniqueness property from Lemma \ref{th:up-to-order-p}, we then have
\begin{equation}
\Xi_{\mathit{St}(b)} = Q\Sigma_b \;\;\text{ modulo $I_{\mathit{diff}}$.}
\end{equation}
\end{remark}

\begin{example} \label{th:s2}
Consider $M = S^2$, with the standard basis $\{1,h\}$ of cohomology. Take $p>2$ (the case $p = 2$ is straightforward, but requires slightly different notation). Using Theorem \ref{th:covariantly-constant}, one can compute that $Q\Sigma_h = -t^{p-1}\Sigma$, where
\begin{equation} \label{eq:explicit-xi-2}
\Sigma = \begin{pmatrix} \sigma_{11} & \sigma_{12} \\ \sigma_{21} & \sigma_{22} \end{pmatrix}, \;\;
\left\{
\begin{aligned}
& \textstyle \sigma_{11} = -\sum_{k=1}^{(p-1)/2} \frac{(2k-1)!}{(k!)^2(k-1)!^2} q^k t^{1-2k}, \\
& \textstyle \sigma_{12} = -\sum_{k=2}^{(p+1)/2} \frac{(2k-2)!}{(k-2)! \, (k-1)!^2 \, k!} q^k t^{2-2k}, \\
& \textstyle \sigma_{21} = \sum_{k=0}^{(p-1)/2} \frac{(2k)!}{(k!)^4}  q^k t^{-2k}, \\
& \textstyle \sigma_{22} = -\sigma_{11}.
\end{aligned}
\right.
\end{equation}
In particular, 
\begin{equation}
\label{eq:qst-sphere}
\mathit{QSt}(h) = -t^{p-1}\sigma_{11}\, 1 - t^{p-1}\sigma_{21}\, h.
\end{equation}
Note that after multiplying with $t^{p-1}$, all the powers of $t$ in \eqref{eq:explicit-xi-2} become nonnegative. More precisely,
\begin{equation}
-t^{p-1}\Sigma = \begin{pmatrix}  0 & q^{(p+1)/2} \\ q^{(p-1)/2} & 0 \end{pmatrix} + (\text{\it terms involving $t$}),
\end{equation}
in agreement with \eqref{eq:non-equivariant} and the fact that the $p$-th quantum power of $h$ is $q^{(p-1)/2}h$. This is proved in Section \ref{sec:computations}.
%**REF(2)**added ``This is proved in Section \ref{sec:computations}."
\end{example}
% k = (p+1)/2, (2k-2)=p-1, we get (-1). At the bottom, (k-1)!^4 = 1. Finally, we have (k-1)/k = (p-1)/(p+1) = -1

\begin{example} \label{th:cubic-surface-mod-2}
Let $M$ be a cubic surface in $\bC P^3$ (this is $\bC P^2$ blown up at $6$ points, with its monotone symplectic form). Take $p = 2$, and let $h \in H^4(M;\bF_2)$ be the Poincar{\'e} dual of a point. Then
\begin{equation}
\mathit{QSt}(h) = \mathit{St}(h) = t^2 h.
\end{equation}
This is interesting because of its implications for Hamiltonian dymanics: by the criterion from \cite{cineli-ginzburg-gurel19,shelukhin19b}, it means that $M$ cannot admit a pseudo-rotation. We refer to Section \ref{subsec:s2} for further discussion.
\end{example}

The proof of Theorem \ref{th:covariantly-constant} goes roughly as follows. We introduce another operation, depending on $a \in H^2(M;\bZ)$ as well as $b \in H^*(M;\bF_p)$,
\begin{equation} \label{eq:modified-sigma}
Q\Pi_{a,b}: H^*(M;\bF_p) \longrightarrow (H^*(M;\Lambda)[[t,\theta]])^{*+|a|-2+p|b|}.
\end{equation}
Geometrically, this is obtained from \eqref{eq:sigma} by equipping the underlying Riemann surface with an additional marked point, which can move around (we insert an incidence constraint dual to $a$ at that point). A localisation-type argument yields
\begin{equation} \label{eq:pi-relation}
t\, Q\Pi_{a,b}(c) = %(-1)^{|a|\,|b|} 
Q\Sigma_b(a \ast c) - a \ast Q\Sigma_b(c).
\end{equation}
We also have an analogue of the divisor equation:
\begin{equation} \label{eq:pi-divisor}
Q\Pi_{a,b}(c) = \partial_a Q\Sigma_b(c).
\end{equation}
Theorem \ref{th:covariantly-constant} follows immediately by combining \eqref{eq:pi-relation} and \eqref{eq:pi-divisor}. 

\begin{remark}
Even though we have no immediate need for it here, it is worth while noting that $Q\Pi_{a,b}$ can be defined more generally for $a \in H^*(M;\bF_p)$, and still satisfies \eqref{eq:pi-relation}, with suitable added Koszul signs (see Remark \ref{th:generalized-pi}).
\end{remark}

\begin{remark}
 %There is an analogue of the string equation,
%\begin{equation}
%Q\Pi_{1,b} = 0.
%\end{equation}
%which is an analogue of the string equation in Gromov-Witten theory. Similarly, and more %importantly for our purpose,
The argument above is closely related to the Cartan relation for quantum Steenrod squares. Namely, let's set $a = \mathit{QSt}(b_1)$, $b = b_2$, $c = 1$ in \eqref{eq:pi-relation}. Then, using \eqref{eq:compose-sigma} one sees that
\begin{equation}
\begin{aligned}
& t\, Q\Pi_{\mathit{QSt}(b_1),b_2}(1) = (-1)^{|b_1|\,|b_2|} Q\Sigma_{b_2}(\mathit{QSt}(b_1)) - \mathit{QSt}(b_1) \ast \mathit{QSt}(b_2) \\
& = (-1)^{|b_1|\,|b_2|\, (p(p-1)/2+1)} Q\Sigma_{b_2 \ast b_1}(1) - \mathit{QSt}(b_1) \ast \mathit{QSt}(b_2) \\
& = (-1)^{|b_1|\,|b_2|\, p(p-1)/2} \mathit{QSt}(b_1 \ast b_2) - \mathit{QSt}(b_1) \ast \mathit{QSt}(b_2). \\
\end{aligned}
\end{equation}
In view of that, it is not surprising that in applications, computations based on covariant constancy closely resemble those from \cite{wilkins18}, where the Cartan relation was the main tool.
\end{remark}

{\em Acknowledgments.} Both authors were partially supported by a Simons Investigator award from the Simons Foundation. Additional support for the first author was provided by the Simons Collaboration for Homological Mirror Symmetry, and by NSF grant DMS-1904997. The second author was additionally supported by a Heilbronn Research Fellowship.

\section{A bit of equivariant (co)homology\label{sec:basicparameterspaces}}
This section introduces some of the algebra and topology underlying our construction. Even though this is elementary, it is helpful as a guiding model for the later discussion.

\subsection{} 
Write
\begin{equation}
S^\infty = \{ w = (w_0,w_1,w_2,\dots) \in \bC^\infty \; : \; w_k = 0 \text{ for $k \gg 0$}, \; \|w\|^2 = |w_0|^2 + |w_1|^2 + \cdots = 1\}.
\end{equation}
Fix a prime $p$, and consider the $\bZ/p$-action on $S^\infty$ generated by
\begin{equation}
\tau(w_0,w_1,\dots) = (\zeta w_0, \zeta w_1,\dots), \;\; \zeta = e^{2\pi  i/p}.
\end{equation}
Take the following subsets:
\begin{align} \label{eq:d-delta-1}
& \Delta_{2k} = \{ w \in S^\infty \;:\; w_k \geq 0, \; w_{k+1} = w_{k+2} = \cdots = 0\}, \\
\label{eq:d-delta-2}
& \Delta_{2k+1} = \{ w \in S^\infty \;:\; e^{-i \theta} w_k \geq 0 \text{ for some $\theta \in [0,2\pi/p]$}, \; w_{k+1} = w_{k+2} = \cdots = 0\}.
\end{align}
\begin{figure}
\begin{centering}
\includegraphics{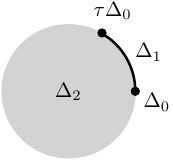}
\caption{The first cells from \eqref{eq:d-delta-1}, \eqref{eq:d-delta-2}.}
\label{fig:s-infinity-cells}
\end{centering}
\end{figure}%
Each of them is homeomorphic to a disc, of the dimension indicated by the subscript. More precisely, $\Delta_{2k}$ is a submanifold with boundary,
\begin{equation}
\partial \Delta_{2k} = \{w_k = w_{k+1} = \cdots = 0\} \iso S^{2k-1},
\end{equation}
and $\Delta_{2k+1}$ a submanifold with two boundary faces, whose intersection forms a corner stratum,
\begin{equation}
\partial \Delta_{2k+1} = \{w_k \geq 0,\, w_{k+1} = w_{k+2} = \cdots = 0\} \cup
\{e^{-2\pi i/p} w_k \geq 0, \, w_{k+1} = w_{k+2} = \cdots = 0\}.
\end{equation}
The subsets \eqref{eq:d-delta-1}, \eqref{eq:d-delta-2} and their images under the $\bZ/p$-action form an equivariant (and regular) cell decomposition of $S^\infty$. The tangent space of $\Delta_{2k}$ at the point where $w_k = 1$ (and where all the other coordinates are therefore zero)  can be identified with $\bC^k$ by projecting to the first $k$ coordinates; we use the resulting orientation. The tangent space of $\Delta_{2k+1}$ at the same point can be similarly identified with $\bC^k \times i\bR$; we use the orientation coming from the complex orientation of $\bC^k$, followed by the positive vertical orientation of $i\bR$. For those orientations, the differential in the cellular chain complex is
\begin{align} \label{eq:partial-delta-1}
& \partial \Delta_{2k} = \Delta_{2k-1} + \tau \Delta_{2k-1} + \cdots + \tau^{p-1} \Delta_{2k-1}, \\ \label{eq:partial-delta-2}
& \partial \Delta_{2k+1} = \tau \Delta_{2k} - \Delta_{2k}.
\end{align}
Here and below, the convention is to ignore terms with negative subscripts.

We adopt the quotient $S^\infty/(\bZ/p)$ as our model for the classifying space $B(\bZ/p)$. If we use $\bF_p$-coefficients, the $\Delta_i$ become cycles on the quotient, and their homology classes form a basis for $H^{\mathit{eq}}_*(\mathit{point};\bF_p) = H_*(S^\infty/(\bZ/p);\bF_p)$. (Moreover, from \eqref{eq:partial-delta-1} one sees that the Bockstein sends $\Delta_{2k}$ to $\Delta_{2k-1}$.) 

\subsection{\label{subsec:kunneth}}
Consider the diagonal embedding $\delta$ on $S^\infty/(\bZ/p)$, and the induced map
\begin{equation}
\delta_*: H_*(S^\infty/(\bZ/p);\bF_p) \longrightarrow (H_*(S^\infty/(\bZ/p);\bF_p))^{\otimes 2}.
\end{equation}

\begin{lemma}
In homology with $\bF_p$-coefficients, 
\begin{equation} \label{eq:kunneth-of-diagonal}
\delta_* \Delta_i = \begin{cases} \displaystyle \sum_{i_1+i_2 = i} \Delta_{i_1} \otimes \Delta_{i_2} & \text{if $i$ is odd or $p=2$,} \\[1em]
\displaystyle \sum_{\substack{i_1+i_2=i \\ \text{$i_k$ even}}} \Delta_{i_1} \otimes \Delta_{i_2} & \text{if $i$ is even and $p>2$.}
\end{cases}
\end{equation}
\end{lemma}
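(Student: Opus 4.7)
My plan is to construct an explicit $\bZ/p$-equivariant diagonal approximation at the chain level and then reduce modulo the group action with $\bF_p$-coefficients.

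The equivariant cellular chain complex $C_*(S^\infty)$, with boundaries given by \eqref{eq:partial-delta-1} and \eqref{eq:partial-delta-2}, is the standard free resolution of $\bF_p$ by $\bF_p[\bZ/p]$-modules, whose differentials alternate between $N = 1 + \tau + \cdots + \tau^{p-1}$ and $(\tau - 1)$. By the comparison theorem (or acyclic models), there exists a $\bZ/p$-equivariant chain map
\[
\Phi \colon C_*(S^\infty) \longrightarrow C_*(S^\infty) \otimes C_*(S^\infty)
\]
(with diagonal $\bZ/p$-action on the target) lifting $\mathrm{id}_{\bF_p}$ on augmentations. Any such $\Phi$, followed by further reduction to $H_*(B(\bZ/p))^{\otimes 2}$ (in which every power $\tau^j$ on either tensor factor becomes the identity), computes $\delta_*$ on homology.

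I would construct $\Phi$ inductively. The base cases are $\Phi(\Delta_0) = \Delta_0 \otimes \Delta_0$ and $\Phi(\Delta_1) = \Delta_1 \otimes \tau\Delta_0 + \Delta_0 \otimes \Delta_1$, the latter being the unique equivariant lift satisfying $\partial \Phi(\Delta_1) = \tau\Delta_0 \otimes \tau\Delta_0 - \Delta_0 \otimes \Delta_0$. Carrying out the calculation first for $\Delta_2$ as a model gives an Eilenberg--Zilber-type formula
\[
\Phi(\Delta_{2k}) = \sum_{i+j=k} \Delta_{2i} \otimes \Delta_{2j} + \sum_{\substack{i+j=k-1 \\ 0 \le r < s \le p-1}} \tau^r \Delta_{2i+1} \otimes \tau^s \Delta_{2j+1},
\]
together with an analogous formula for $\Phi(\Delta_{2k+1})$ involving the mixed-parity cross terms $\tau^*\Delta_{2i+1}\otimes\tau^*\Delta_{2j}$ and $\tau^*\Delta_{2i}\otimes\tau^*\Delta_{2j+1}$. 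The specific shifts $\tau^*$, and the range $r < s$ in the doubly odd terms, are selected so that the graded Leibniz rule matches \eqref{eq:partial-delta-1}--\eqref{eq:partial-delta-2}.

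Finally, set every $\tau^j = \mathrm{id}$. The ``doubly odd'' double sum collapses to $\binom{p}{2}\,\Delta_{2i+1}\otimes\Delta_{2j+1}$; since $\binom{p}{2}\equiv 0\pmod p$ for $p > 2$ and $\binom{2}{2} = 1$, these terms vanish for odd $p$ but survive for $p = 2$. The remaining sums yield one copy of every bidegree $(i_1, i_2)$ with $i_1 + i_2 = i$ of the correct parity, reproducing both cases of \eqref{eq:kunneth-of-diagonal} with coefficient $+1$. The main technical obstacle is the combinatorial bookkeeping to fix the shifts $\tau^*$ and to verify $\partial\Phi = \Phi\partial$ in full generality; this is classical for the standard resolution of a cyclic group, but requires care in the parity cases.
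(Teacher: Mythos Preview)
Your approach is correct and classical: you build an explicit $\bZ/p$-equivariant diagonal approximation on the standard free resolution, then pass to coinvariants. The formula you quote for $\Phi(\Delta_{2k})$ is essentially the textbook one for cyclic groups, and the collapse of the doubly-odd sum to $\binom{p}{2}$ copies is the right mechanism distinguishing $p=2$ from $p>2$.

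The paper proceeds quite differently. Rather than working on chains, it exploits the duality between $\delta_*$ and the cup product $\delta^*$ on $H^*(B\bZ/p;\bF_p)$. Since that ring is known (generated by $\theta$ and $t$), one can read off which bidegrees $\Delta_{i_1}\otimes\Delta_{i_2}$ must appear in $\delta_*\Delta_i$: for $p=2$ every term is forced because each $H^{i_1}\otimes H^{i_2}$ is a copy of $\bF_2$; for $p>2$ the ring structure identifies the nonvanishing pairings, and then a short computation with the explicit generators $t,\theta$ and the orientation conventions pins down every coefficient as $+1$. No chain-level formula is written down or checked.

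The paper's route is shorter and avoids the combinatorial verification of $\partial\Phi=\Phi\partial$ that you flag as the main obstacle. On the other hand, your argument is more self-contained (it does not appeal to the cohomology ring structure as a black box) and directly produces a chain-level diagonal approximation; this is in fact what the paper needs immediately afterwards in \eqref{eq:kunneth-of-diagonal-2}, which it instead obtains indirectly via simplicial approximations of the homology relation. So your method, while more laborious to execute, yields a slightly stronger intermediate statement.
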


\begin{proof}
For $p = 2$, this is clear: from the relation between diagonal map and cup product, and the ring structure on the cohomology of $\bR P^\infty = S^\infty/(\bZ/2)$, we can see that $\delta_* \Delta_i$ must have nonzero components in all groups $H^{i_1} \otimes H^{i_2}$, and each of those is a copy of $\bF_2$.

For $p>2$, the same argument shows that exactly the terms in \eqref{eq:kunneth-of-diagonal} must occur, but possibly with some nonzero $\bF_p$-coefficients, which have to be determined by looking a little more carefully. Choose generators $\theta \in H^1(S^\infty/(\bZ/p);\bF_p)$ and $t \in H^2(S^\infty/(\bZ/p);\bF_p)$ so that 
\begin{equation}
\langle \theta, \Delta_1 \rangle = 1, \;\; \langle t, \Delta_2 \rangle = -1.
\end{equation}
Because $\Delta_2$ was defined using the complex orientation, this means that $t$ is the pullback of the (mod $p$) Chern class of the tautological line bundle $S^\infty \rightarrow \bC P^\infty$ under the quotient map $S^\infty/(\bZ/p) \rightarrow S^\infty/S^1 = \bC P^\infty$. Looking at the orientations of the higher-dimensional cells yields
\begin{equation}
\langle t^k\theta, \Delta_{2k+1} \rangle = \langle t^k, \Delta_{2k} \rangle = (-1)^k.
\end{equation}
For $k = k_1+k_2$, we have
\begin{align}
& \langle t^{k_1} \otimes t^{k_2}, \delta_* \Delta_{2k} \rangle =
\langle \delta^*(t^{k_1} \otimes t^{k_2}), \Delta_{2k}\rangle =
\langle t^k, \Delta_{2k} \rangle, \\
& \langle t^{k_1}\theta \otimes t^{k_2}, \delta_* \Delta_{2k+1} \rangle =
\langle \delta^*(t^{k_1}\theta \otimes t^{k_2}), \Delta_{2k+1} \rangle
= \langle t^k\theta, \Delta_{2k+1} \rangle,
\end{align}
and that implies that the coefficients in \eqref{eq:kunneth-of-diagonal} are all $1$, as desired.
\end{proof}

What does this mean on the cochain level? For each $k$, take a smooth triangulation of $S^{2k-1}/(\bZ/p)$. Pull that back (taking preimages of the simplices) to a triangulation of $\partial \Delta_{2k}$, and then extend that to a triangulation of $\Delta_{2k}$. The outcome is an explicit smooth singular chain in $S^\infty/(\bZ/p)$, denoted by $\tilde{\Delta}_{2k}$, which becomes a singular cycle when the coefficients are reduced modulo $p$, and which represents the homology class of $\Delta_{2k}$ in $H_*(S^\infty/(\bZ/p);\bF_p)$. A version of the same process produces corresponding singular chains $\tilde{\Delta}_{2k-1}$. With that in mind, let's look at the relations underlying \eqref{eq:kunneth-of-diagonal}:
\begin{equation}
\label{eq:kunneth-of-diagonal-2}
\delta \tilde\Delta_i \sim \begin{cases} \displaystyle \sum_{i_1+i_2 = i} \tilde\Delta_{i_1} \times \tilde\Delta_{i_2} & \text{if $i$ is odd or $p=2$,} \\[1em]
\displaystyle \sum_{\substack{i_1+i_2=i \\ \text{$i_k$ even}}} \tilde\Delta_{i_1} \times \tilde\Delta_{i_2} & \text{if $i$ is even and $p>2$.}
\end{cases}
\end{equation}
On the right hand side, one decomposes the products into simplices. After that, the relation means that there is a singular chain whose boundary (mod $p$) equals the difference between the two sides. That chain can again be chosen to be smooth. One could in principle try to spell all of this out using explicit chains, but that is not necessary for our purpose.

\subsection{\label{sec:subsec-basic-top-b}}
Consider the two-sphere $S = \bar{\bC} = \bC \cup \{\infty\}$, again with a $\bZ/p$-action 
$\sigma(v) = \zeta v$, and the subsets
\begin{align}
\label{eq:p0}
& P_0 = \{v = 0\}, \;\;
Q_0 = \{v = \infty\}, \\
& L_1 = \{v \geq 0\} \cup \{v = \infty\}, \\
\label{eq:b2}
& B_2 = \{e^{-i \theta} v \geq 0 \text{ for some $\theta \in [0,2\pi/p]$}\} \cup \{v = \infty\}.
\end{align}
We use the real orientation of $L_1$, and the complex orientation of $B_2$. Let's denote the associated cellular chain complex simply by $C_*(S)$. Its differential is
\begin{align}
\label{eq:diff-s2-1}
& \partial P_0 = \partial Q_0 = 0, \\
& \partial L_1 = Q_0 - P_0, \\
\label{eq:diff-s2-3}
& \partial B_2 = L_1 - \sigma L_1.
\end{align}
\begin{figure}
\begin{centering}
\includegraphics{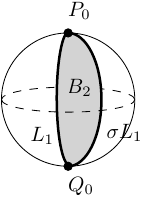}
\caption{The cells from \eqref{eq:p0}--\eqref{eq:b2}.}
\label{fig:s2-decomposition}
\end{centering}
\end{figure}%

Now look at $S^\infty \times_{\bZ/p} S$, which means identifying 
\begin{equation} \label{eq:id}
(w,\sigma v) \sim (\tau w, v). 
\end{equation}
This inherits a cell decomposition. The associated differential, which we denote by $\partial^{\mathit{eq}}$, is
\begin{align}
& 
\partial^{\mathit{eq}} (\Delta_{2k} \times P_0) =  0, 
\;\;
\partial^{\mathit{eq}} (\Delta_{2k+1} \times P_0) = 0, 
\\ & 
\partial^{\mathit{eq}} (\Delta_{2k} \times Q_0) =  0, 
\;\; 
\partial^{\mathit{eq}} (\Delta_{2k+1} \times Q_0) = 0, 
\\ \label{eq:cellular-product-3} &
\partial^{\mathit{eq}} (\Delta_{2k} \times \sigma^j L_1) =  \Delta_{2k} \times (Q_0 - P_0) 
+ \Delta_{2k-1} \times (L_1 + \sigma L_1 + \cdots + \sigma^{p-1} L_1),
\\ \label{eq:cellular-product-4} &
\partial^{\mathit{eq}} (\Delta_{2k+1} \times \sigma^j L_1) =  -\Delta_{2k+1} \times (Q_0-P_0)
+ \Delta_{2k} \times (\sigma^{j+1}L_1 - \sigma^j L_1),
\\ \label{eq:cellular-product-5} &
\partial^{\mathit{eq}} (\Delta_{2k} \times \sigma^j B_2) = - \Delta_{2k} \times (\sigma^{j+1} L_1  - \sigma^j L_1)
+ \Delta_{2k-1} \times (B_2 + \cdots + \sigma^{p-1}B_2),
\\ \label{eq:cellular-product-6} &
\partial^{\mathit{eq}} (\Delta_{2k+1} \times \sigma^j B_2) = \Delta_{2k+1} \times (\sigma^{j+1}L_1 -\sigma^jL_1)
+ \Delta_{2k} \times (\sigma^{j+1}B_2 - \sigma^j B_2).
\end{align}

\begin{lemma} \label{lemma:homologyrelations}
Take coefficients in $\bF_p$. In the cellular complex of $S^\infty \times_{\bZ/p} S$, the following homology relationships hold:
\begin{align} 
& \label{eq:even-relation}
\Delta_{2k} \times (Q_0 - P_0) \sim \Delta_{2k-2} \times (B_2 + \sigma B_2 + \cdots + \sigma^{p-1}B_2),
\\ & \label{eq:odd-relation}
\Delta_{2k+1} \times (Q_0 - P_0) \sim \Delta_{2k-1} \times (B_2 + \sigma B_2 + \cdots + \sigma^{p-1}B_2).
\end{align}
\end{lemma}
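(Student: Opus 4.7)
The strategy is to exhibit explicit cellular chains in $S^\infty \times_{\bZ/p} S$ whose boundaries, reduced mod $p$, give precisely the desired relations. The building blocks are the products $\Delta_i \times \sigma^j L_1$ and $\Delta_i \times \sigma^j B_2$, and the computation is driven entirely by the boundary formulas \eqref{eq:cellular-product-3}--\eqref{eq:cellular-product-6}.

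For the odd relation \eqref{eq:odd-relation} I would propose the chain
\begin{equation*}
C_{\mathit{odd}} = -\Delta_{2k+1}\times L_1 - \Delta_{2k} \times B_2.
\end{equation*}
Applying \eqref{eq:cellular-product-4} with $j=0$ produces $\Delta_{2k+1}\times (Q_0-P_0) + \Delta_{2k}\times(L_1 - \sigma L_1)$, and applying \eqref{eq:cellular-product-5} with $j=0$ produces $-\Delta_{2k}\times(L_1 - \sigma L_1) - \Delta_{2k-1}\times\sum_i \sigma^i B_2$. The cross terms $\Delta_{2k}\times(L_1-\sigma L_1)$ cancel, and the result is \eqref{eq:odd-relation} already over $\bZ$.

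The even relation is more delicate: naive chains only give $\Delta_{2k}\times(Q_0-P_0) \sim -\Delta_{2k-1}\times \sum_i \sigma^i L_1$, and one must trade the right-hand side for a $B_2$-expression using a mod-$p$ argument. I would use
\begin{equation*}
C_{\mathit{even}} = \Delta_{2k}\times L_1 + \sum_{j=0}^{p-1} j\, \Delta_{2k-1}\times \sigma^j B_2.
\end{equation*}
The first summand contributes $\Delta_{2k}\times(Q_0-P_0) + \Delta_{2k-1}\times\sum_i\sigma^i L_1$ by \eqref{eq:cellular-product-3}. The key computation, applied to $X = L_1$ and $X=B_2$, is the telescoping identity
\begin{equation*}
\sum_{j=0}^{p-1} j(\sigma^{j+1}X - \sigma^j X) = p\sigma^p X - \sum_{j=0}^{p-1}\sigma^j X \equiv -\sum_{j=0}^{p-1}\sigma^j X \pmod{p},
\end{equation*}
which uses $\sigma^p=1$ together with the vanishing of $p$ in $\bF_p$. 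Plugging this into the $\Delta_{2k-1}\times \sigma^j B_2$ terms via \eqref{eq:cellular-product-6} (with $k$ replaced by $k-1$), one finds that the $\Delta_{2k-1}\times \sum_i \sigma^i L_1$ piece is killed and one is left with $-\Delta_{2k-2}\times \sum_i\sigma^i B_2$, giving \eqref{eq:even-relation}.

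The main (very mild) obstacle is the even case: one must notice that the integral relation fails, and that the correct remedy is to weight the $B_2$-cells by $j$, so that mod-$p$ telescoping converts a single coboundary into a symmetric sum. Signs are a second source of care, but they are dictated by the graded Leibniz rule together with \eqref{eq:diff-s2-1}--\eqref{eq:diff-s2-3} and \eqref{eq:partial-delta-1}--\eqref{eq:partial-delta-2}, and the identifications \eqref{eq:id} handle the translations $\tau^i \Delta_j \times \sigma^\ell X \sim \Delta_j \times \sigma^{i+\ell} X$.
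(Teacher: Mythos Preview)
Your proof is correct and follows essentially the same approach as the paper: the odd relation is obtained by combining \eqref{eq:cellular-product-4} and \eqref{eq:cellular-product-5}, and the even relation by combining \eqref{eq:cellular-product-3} with the weighted sum $\sum_j j\,\Delta_{2k-1}\times\sigma^j B_2$ and applying the mod-$p$ telescoping identity, which is exactly the paper's computation \eqref{eq:1st} (up to an index shift $k\to k-1$). Your explicit chains $C_{\mathit{odd}}$ and $C_{\mathit{even}}$ are precisely the ones implicit in the paper's argument.
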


\begin{proof}
\eqref{eq:even-relation} is obtained by subtracting \eqref{eq:cellular-product-3} from the following, which comes from \eqref{eq:cellular-product-6}:
\begin{equation} \label{eq:1st}
\begin{aligned}
&
\partial^{\mathit{eq}} \big( \Delta_{2k+1} \times (\sigma B_2 + 2\sigma^2 B_2 + \cdots + (p-1)\sigma^{p-1}B_2) \big) 
\\ & \qquad \qquad 
= - \Delta_{2k+1} \times (L_1 + \cdots + \sigma^{p-1}L_1)
- \Delta_{2k} \times (B_2 + \cdots + \sigma^{p-1}B_2).
\end{aligned}
\end{equation}
The second relation \eqref{eq:odd-relation} is a combination of \eqref{eq:cellular-product-4}, \eqref{eq:cellular-product-5}.
\end{proof}

To fit this into the general framework of equivariant homology,
note that as an application of the localisation theorem, the map induced by inclusion of the fixed point set,
\begin{equation} \label{eq:include-fixed-set}
H_*^{\mathit{eq}}(\mathit{point};\bF_p) \otimes P_0 \oplus
H_*^{\mathit{eq}}(\mathit{point};\bF_p) \otimes Q_0
\longrightarrow H_*^{\mathit{eq}}(S;\bF_p) = H_*(S^\infty \times_{\bZ/p} S;\bF_p)
\end{equation}
must be an isomorphism in sufficiently high degrees. Using the computations above, one can see how that works out concretely: \eqref{eq:include-fixed-set} is surjective, and it fails to be injective only in degrees $0$ and $1$, where the kernel is generated by $\Delta_0 \otimes (Q_0-P_0)$ and $\Delta_1 \otimes (Q_0-P_0)$, respectively.

More generally, take any (homologically graded) chain complex, carrying a $(\bZ/p)$-action. Its equivariant homology is defined by taking the tensor product with the previously considered cellular complex of $S^\infty$, and then passing to coinvariants for the combined action in the same sense as in \eqref{eq:id}. The resulting equivariant differential is
\begin{align}
&
\partial^{\mathit{eq}}(\Delta_{2k} \otimes \xi) =  \Delta_{2k-1} \otimes (\xi + \sigma \xi + \cdots + \sigma^{p-1} \xi) + \Delta_{2k} \otimes \partial \xi,
\\ 
&
\partial^{\mathit{eq}}(\Delta_{2k+1} \otimes \xi) = -\Delta_{2k+1} \otimes \partial \xi + \Delta_{2k} \otimes (\sigma \xi - \xi).
\end{align}
Here, $\xi$ is an element of the original chain complex, and $\sigma$ is the automorphism which generates its $(\bZ/p)$-action. These formulae generalize the ones we've previously written down for $C_*(S)$.

\subsection{}
%Our convention for the generators of ring \eqref{eq:equivariant-ring} dual to \eqref{eq:equivariant-homology} is that $\langle \theta, \Delta_1 \rangle = 1$ and $\langle t, \Delta_2 \rangle = -1$. For the general pairings, this means that
%\begin{equation} \label{eq:sign-t}
%\langle t^k, \Delta_{2k} \rangle = (-1)^k, \quad
%\langle t^k\theta, \Delta_{2k+1} \rangle = (-1)^k.
%\end{equation}
%
Dually to our previous construction, one can start with a cohomologically graded complex $C$ with a $(\bZ/p)$-action, and define an equivariant complex
\begin{equation} \label{eq:general-equivariant}
C_{\mathit{eq}} = C[[t,\theta]]
\end{equation}
where the formal variables are as in \eqref{eq:equivariant-ring}, with differential
\begin{align}
& 
d_{\mathit{eq}}(x t^k) = dx\, t^k + (-1)^{|x|} (\sigma x - x) t^k\theta, \\
&
d_{\mathit{eq}}(x t^k\theta) = dx\, t^k\theta + (-1)^{|x|} (x + \sigma x + \cdots + \sigma^{p-1} x) t^{k+1}.
%&
%d_{\mathit{eq}}(x t^k) = dx \, t^k+ (-1)^{|x|} (\sigma^{-1}x - x) t^k\theta, 
%\\
%&
%d_{\mathit{eq}}(x t^k\theta) = dx \, t^k\theta + (-1)^{|x|} (\sigma-\mathit{id})^{p-1} x %t^{k+1}.
\end{align}
Write $H^*_{\mathit{eq}}(C)= H^*(C_{\mathit{eq}})$ for the resulting cohomology. 

\begin{lemma} \label{th:trivial-sigma}
On $C_{\mathit{eq}}$, the operations $t$ and $\sigma t$ are homotopic.
\end{lemma}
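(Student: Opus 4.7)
The plan is to exhibit an explicit chain homotopy of degree $+1$ between multiplication by $t$ and multiplication by $\sigma t$ (both of which are chain maps on $C_{\mathit{eq}}$, the latter being a chain map because $\sigma$ commutes with the differential on $C$). The guiding principle is that the differential $d_{\mathit{eq}}$ already contains a term $(-1)^{|x|}(\sigma x - x)t^k\theta$ in its value on $xt^k$, which is precisely the obstruction we want to trivialise, shifted by a factor of $\theta$. A homotopy should therefore reverse this shift, replacing $\theta$ by $t$.

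Concretely, I would define $h \colon C_{\mathit{eq}} \to C_{\mathit{eq}}$ of degree $+1$ on the two types of basis elements by
\begin{equation*}
h(x\,t^k) = 0, \qquad h(x\,t^k\theta) = (-1)^{|x|+1}\, x\,t^{k+1},
\end{equation*}
extended $\bF_p$-linearly. The claim to verify is
\begin{equation*}
d_{\mathit{eq}} h + h\, d_{\mathit{eq}} = t - \sigma t.
\end{equation*}

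Checking this is a direct sign computation using the formulas for $d_{\mathit{eq}}$ given just above the lemma. On an element of the form $x\,t^k$, only $h\, d_{\mathit{eq}}$ contributes; the $dx\,t^k$ piece of $d_{\mathit{eq}}(xt^k)$ is killed by $h$, and the $(-1)^{|x|}(\sigma x - x)t^k\theta$ piece produces $(x - \sigma x)t^{k+1}$, which is exactly $(t - \sigma t)(xt^k)$. On $x\,t^k\theta$, the contribution of $d_{\mathit{eq}}h$ is $(-1)^{|x|+1} dx\,t^{k+1} - (\sigma x - x)t^{k+1}\theta$, while $h\, d_{\mathit{eq}}$ contributes only $(-1)^{|x|} dx\,t^{k+1}$ (the $t^{k+1}$ term without $\theta$ is killed by $h$). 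The $dx$ pieces cancel, and what remains is $(x - \sigma x)t^{k+1}\theta = (t-\sigma t)(xt^k\theta)$.

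This is essentially routine, and I do not anticipate a genuine obstacle — the only points requiring care are (i) confirming that $\sigma t$ is indeed a chain map, which follows immediately from $\sigma$ being a $(\bZ/p)$-action commuting with $d$, and (ii) tracking Koszul signs carefully in the $\theta$-case so that the $dx$ terms cancel rather than add. The specific sign $(-1)^{|x|+1}$ in the definition of $h$ is chosen precisely to make this cancellation work.
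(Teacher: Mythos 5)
Your proof is correct and is essentially the same as the paper's: the paper also exhibits the explicit homotopy $h(xt^k)=0$, $h(xt^k\theta)=(-1)^{|x|}xt^{k+1}$, which is your $h$ up to an overall sign (yours yields $t-\sigma t$, the paper's $\sigma t - t$; either establishes the lemma).
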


\begin{proof}
The desired homotopy is $h(x t^k) = 0$, $h(x t^k\theta) = (-1)^{|x|} xt^{k+1}$. % This gives the de
%\begin{align}
%& d_{\mathit{eq}} h(x t^k) + h d_{\mathit{eq}} (x t^k) = \sigma(x)t^{k+1} - x t^{k+1}, \\
%& d_{\mathit{eq}} h(x t^k\theta) + h d_{\mathit{eq}}( x t^k \theta) = %\sigma(x) t^{k+1}\theta - x t^{k+1}\theta.
%\end{align}
\end{proof}

From now on, we work with $\bF_p$-coefficients. In that case, the equivariant complex \eqref{eq:general-equivariant} carries a degree $1$ endomorphism $\tilde{\theta}$, which one can informally think of as a corrected version of multiplication with $\theta$ (acting on the left):
\begin{align} \label{eq:endomorphism}
& \tilde{\theta}(xt^k) = (-1)^{|x|} xt^k\theta, \\
\label{eq:endomorphism-2}
& \tilde{\theta}(xt^k\theta) = (-1)^{|x|} (\sigma x + 2\sigma^2 x + \cdots + (p-1)\sigma^{p-1} x)t^{k+1}.
\end{align}
The second part \eqref{eq:endomorphism-2} contains the kind of expression we've seen previously in \eqref{eq:1st}. It is helpful to keep in mind that modulo $p$,
\begin{align}
& \mathit{id} + \sigma + \sigma^2 + \cdots + \sigma^{p-1} = (\sigma-\mathit{id})^{p-1} = \sigma (\sigma-\mathit{id})^{p-1} = \cdots, \\
& \sigma + 2\sigma^2 + \cdots + (p-1)\sigma^{p-1} = -\sigma(\sigma-\mathit{id})^{p-2}.
\label{eq:sum-jj}
\end{align}
Using that, one sees that the map $\tilde{\theta}$ is a chain map (of degree $1$) with respect to $d_{\mathit{eq}}$: 
\begin{equation}
\begin{aligned}
& d_{\mathit{eq}} \tilde{\theta}(xt^k) = d_{\mathit{eq}}( (-1)^{|x|} xt^k\theta) 
%\\ & \;\; 
= (-1)^{|x|} dx\, t^k\theta + (\mathit{id} + \sigma + \cdots)x \, t^{k+1}
\\ & \;\; = (-1)^{|x|} dx\, t^k\theta - (\sigma + 2\sigma^2 + \cdots)(\sigma-\mathit{id})x t^{k+1}
\\ & \;\; = \tilde{\theta} (-dx\, t^k - (-1)^{|x|} (\sigma-\mathit{id}) x \, t^k\theta)
%\\ & \;\; 
= -\tilde{\theta} d_{\mathit{eq}}(xt^k),
\end{aligned}
\end{equation}
and similarly
\begin{equation}
\begin{aligned}
& d_{\mathit{eq}} \tilde{\theta}(x t^k\theta) = d_{\mathit{eq}}( (-1)^{|x|} 
(\sigma + 2\sigma^2 + \cdots) xt^{k+1})
\\ & \;\; = (-1)^{|x|} (\sigma + 2\sigma^2 + \cdots) dx\, t^{k+1} - (\mathit{id} + \sigma + \cdots) x t^{k+1} \theta 
\\ & \;\; = -\tilde{\theta}(dx \, t^k \theta + (-1)^{|x|} (\mathit{id} + \sigma + \cdots) xt^{k+1})
% \\ &\;\; 
= -\tilde{\theta} d_{\mathit{eq}}(x t^k\theta).
\end{aligned}
\end{equation}

%**REF(3)**added line above ^

\begin{lemma}
Up to homotopy, $\tilde{\theta}^2$ is multiplication by $t$ if $p = 2$, and $0$ for $p>2$.
\end{lemma}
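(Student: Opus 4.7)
The plan is to compute $\tilde\theta^2$ explicitly on both types of basis elements $xt^k$ and $xt^k\theta$, recognize the answer as $t$ composed with the operator $\phi = \sigma + 2\sigma^2 + \cdots + (p-1)\sigma^{p-1}$, and then invoke Lemma~\ref{th:trivial-sigma} together with identity \eqref{eq:sum-jj} to produce the required homotopy.

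First I would apply \eqref{eq:endomorphism} and \eqref{eq:endomorphism-2} twice. The two factors of $(-1)^{|x|}$ cancel (and $\sigma$ preserves degree, so $|\phi(x)|=|x|$), and a direct computation gives
\[
\tilde\theta^2(xt^k) = \phi(x)\,t^{k+1}, \qquad \tilde\theta^2(xt^k\theta) = \phi(x)\,t^{k+1}\theta,
\]
so $\tilde\theta^2$ is the composition of multiplication by $t$ with the endomorphism $\phi$ (acting on the ``$C$-factor'' of $C_{\mathit{eq}}$). By \eqref{eq:sum-jj}, $\phi = -\sigma(\sigma-\mathit{id})^{p-2}$ modulo $p$.

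For $p=2$, this reduces to $\phi = \sigma$, so $\tilde\theta^2$ equals multiplication by $\sigma t$, and Lemma~\ref{th:trivial-sigma} at once gives $\tilde\theta^2\sim t$. For $p>2$, I would factor
\[
\tilde\theta^2 \;=\; -\sigma(\sigma-\mathit{id})^{p-3}\cdot(\sigma-\mathit{id})\,t,
\]
which makes sense because $p-3\geq 0$. The operator $(\sigma-\mathit{id})t$ is precisely the one shown to be null-homotopic in the proof of Lemma~\ref{th:trivial-sigma}, via $[d_{\mathit{eq}},h]=(\sigma-\mathit{id})t$. Since $\sigma$ commutes with the differential on $C$, it commutes with $d_{\mathit{eq}}$ as well, and therefore so does the degree-zero operator $\sigma(\sigma-\mathit{id})^{p-3}$. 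Composing the homotopy $h$ from Lemma~\ref{th:trivial-sigma} on the left with this operator yields
\[
H \;=\; -\sigma(\sigma-\mathit{id})^{p-3}\,h, \qquad [d_{\mathit{eq}},H] = \tilde\theta^2,
\]
showing $\tilde\theta^2\sim 0$ for $p>2$.

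The only place one has to pay attention is Step 1, namely tracking the Koszul signs through two successive applications of $\tilde\theta$; but these collapse immediately since $\tilde\theta$ always produces a sign $(-1)^{|x|}$ and the intermediate expressions still have the same underlying $C$-degree as $x$. After that, the argument is purely formal: everything comes from Lemma~\ref{th:trivial-sigma} and the identity \eqref{eq:sum-jj} that was already highlighted in the text, so I do not anticipate a serious obstacle.
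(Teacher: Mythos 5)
Your proposal is correct and follows essentially the same line of argument as the paper: compute $\tilde\theta^2$ explicitly, rewrite the resulting operator $\sigma + 2\sigma^2 + \cdots + (p-1)\sigma^{p-1}$ via \eqref{eq:sum-jj} as $-\sigma(\sigma-\mathit{id})^{p-2}$, and then factor out $(\sigma-\mathit{id})t$, which Lemma~\ref{th:trivial-sigma} shows is nullhomotopic. You have simply made explicit two steps the paper leaves implicit — the sign cancellation in the double application of $\tilde\theta$, and the fact that $\sigma$ (acting on the $C$-factor) commutes with $d_{\mathit{eq}}$ so that left-composing the homotopy $h$ with $-\sigma(\sigma-\mathit{id})^{p-3}$ still yields a chain homotopy.
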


\begin{proof}
In terms of \eqref{eq:sum-jj}, $\tilde{\theta}^2$ is the action of $-\sigma(\sigma-\mathit{id})^{p-2}t$ on the equivariant complex. But the action of $(\sigma-\mathit{id})t$ is nullhomotopic by Lemma \ref{th:trivial-sigma}, and that implies the desired statement.
\end{proof}

A classical application of equivariant cohomology (basic to the definition of Steenrod operations) is to start with a general cochain complex $C$ (without any $(\bZ/p)$-action), and consider its $p$-fold tensor product $C^{\otimes p}$ with the action that cyclically permutes the tensor factors. The equivariant complex $(C^{\otimes \mathit p})_{\mathit{eq}}$ is a homotopy invariant of $C$. We recall the following:

\begin{lemma} \label{lemma:power-map-additive-t}
Taking a cocycle $x \in C$ to $x^{\otimes p} \in (C^{\otimes p})_{\mathit{eq}}$ yields a map
\begin{equation}
H^*(C) \longrightarrow H^{p*}_{\mathit{eq}}(C^{\otimes p}), 
\end{equation}
which becomes additive after multiplying by $t$.
\end{lemma}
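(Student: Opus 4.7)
The plan is to verify the cocycle property, establish well-definedness on cohomology via a standard construction, and then prove the additivity claim directly from the formula for $d_{\mathit{eq}}$.

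First I would check that $x^{\otimes p}$ is an equivariant cocycle when $x$ is a cocycle. From the definition,
\[
d_{\mathit{eq}}(x^{\otimes p}) = d(x^{\otimes p}) + (-1)^{p|x|}(\sigma - \mathit{id})(x^{\otimes p})\, \theta.
\]
The first term vanishes by the graded Leibniz rule together with $dx = 0$. The second vanishes because, with the Koszul sign conventions in place, cyclic permutation of $p$ identical factors acts as the identity on $x^{\otimes p}$. Well-definedness of the induced map on cohomology is the classical setup underlying Steenrod power operations: if $x' - x = dz$ for cocycles $x, x'$, one expands $(x')^{\otimes p} - x^{\otimes p}$ as a sum over nonempty $S \subseteq \{1, \ldots, p\}$ of the tensor monomial $M_S$ with $dz$ at slots in $S$ and $x$ elsewhere. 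The mixed $M_S$ fall into $\sigma$-orbits of full size $p$ (since $p$ is prime and $\sigma$-fixed monomials are pure); each orbit representative has an obvious $d$-preimage in $C^{\otimes p}$ obtained by replacing one $dz$-slot by $z$, and the spurious $(\sigma - \mathit{id})$-terms produced by $d_{\mathit{eq}}$ on this preimage are cancelled by an appropriate $\theta$-correction. The single pure term $(dz)^{\otimes p}$ is handled by a similar but slightly more delicate construction, an instance of the acyclic-carriers argument familiar from Steenrod theory.

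The additivity claim is the heart of the lemma. For cocycles $x, y$ of equal degree, expand
\[
(x + y)^{\otimes p} - x^{\otimes p} - y^{\otimes p} = \sum_W W,
\]
summed over tensor monomials $W$ containing at least one $x$ and at least one $y$. Since $p$ is prime and the only $\sigma$-fixed monomials are the pure ones, these mixed $W$ decompose into $\sigma$-orbits of size exactly $p$. Choosing a representative from each orbit, the orbit sum is $(1 + \sigma + \cdots + \sigma^{p-1})W$. Each such $W$ is a cocycle in $C^{\otimes p}$ because $dx = dy = 0$, so the definition of $d_{\mathit{eq}}$ gives
\[
d_{\mathit{eq}}(W \theta) = (-1)^{|W|}(1 + \sigma + \cdots + \sigma^{p-1})W \cdot t.
\]
Hence $t$ times each orbit sum is an equivariant coboundary. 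Summing over orbits yields $t \cdot [(x+y)^{\otimes p} - x^{\otimes p} - y^{\otimes p}] = 0$ in $H^*_{\mathit{eq}}(C^{\otimes p})$, which is the desired additivity.

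The main obstacle is the well-definedness step, particularly the treatment of the pure monomial $(dz)^{\otimes p}$, whose natural $d$-preimage $z \otimes (dz)^{\otimes(p-1)}$ fails to be $\sigma$-invariant, producing $\theta$-terms that must be killed by explicit corrections respecting the characteristic-$p$ symmetries. The additivity argument itself is a clean formal manipulation once one observes that $d_{\mathit{eq}}(W\theta)$ manufactures exactly the trace $(1 + \sigma + \cdots + \sigma^{p-1})W$ needed to absorb each $\sigma$-orbit sum, at the cost of a factor of $t$ — which is precisely why additivity only holds after multiplication by $t$.
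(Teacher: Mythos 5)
Your cocycle check and your additivity argument are correct, and the additivity argument is in fact identical to the paper's: expand $(x_1+x_2)^{\otimes p} - x_1^{\otimes p} - x_2^{\otimes p}$ into the $2^p-2$ mixed monomials, group them into free $(\bZ/p)$-orbits, take one representative per orbit, multiply the sum by $\theta$, and observe that $d_{\mathit{eq}}$ of this chain produces $t$ times the original expression.

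The well-definedness step is where you and the paper genuinely diverge, and where your argument has a real gap. The paper avoids any chain-level bookkeeping by a reduction: it notes that it suffices to treat a three-dimensional complex $C$ with basis $(x_1,x_2,z)$ and $dz=x_1-x_2$ (the general case follows by mapping that $C$ into an arbitrary complex), takes the quasi-isomorphism $C \to D$ onto the one-dimensional complex $D = \bF_p\cdot y$ sending both $x_k \mapsto y$ and $z \mapsto 0$, and uses homotopy invariance of $(\,\cdot^{\otimes p})_{\mathit{eq}}$ to conclude that $x_1^{\otimes p}$ and $x_2^{\otimes p}$, having the same image $y^{\otimes p}$, are cohomologous. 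Your approach instead tries to build an explicit $d_{\mathit{eq}}$-primitive of $(x')^{\otimes p} - x^{\otimes p}$. For the mixed monomials you assert that a $\theta$-correction will cancel the spurious $(\sigma-\mathit{id})$ terms, but this is not automatic: adding $U\theta$ contributes a new $t$-term $\pm(1+\sigma+\cdots+\sigma^{p-1})U\,t$ via $d_{\mathit{eq}}$, so the correction must be chosen to simultaneously hit the unwanted $\theta$-term and have vanishing trace, which you do not verify. More seriously, you delegate the pure term $(dz)^{\otimes p}$ to ``acyclic carriers'' without carrying out any construction; this is the hardest piece, since $(dz)^{\otimes p}$ is $\sigma$-invariant and its natural $d$-primitive is not, so the correction terms cascade. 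As written, the well-definedness argument is a plausible plan rather than a proof, and it is exactly the part that the paper's quasi-isomorphism trick renders unnecessary. You should either fill in the explicit chain-level corrections (likely reproducing the acyclicity of the subcomplex spanned by $z,dz$, i.e., the same homotopy-invariance input) or replace this half of your argument with the paper's reduction.
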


\begin{proof}
Since $x^{\otimes p}$ is a $(\bZ/p)$-invariant cocycle in $C^{\otimes p}$ (note that the Koszul signs here are always trivial), it is also a $d_{\mathit{eq}}$-cocycle. 

The next step is to show that if we have two cohomologous cocycles, $x_1-x_2 = dz$, then $x_1^{\otimes p}$ and $x_2^{\otimes p}$ are cohomologous in $(C^{\otimes p})_{\mathit{eq}}$. It is enough to consider the case where $C$ is three-dimensional, with basis $(x_1,x_2,z)$; the general case then follows by mapping this $C$ into any desired complex. Take a one-dimensional complex $D$ with a single generator $y$, and the map $C \rightarrow D$ which takes both $x_k$ to $y$ (and maps $z$ to zero). This is clearly a quasi-isomorphism, and therefore induces a quasi-isomorphism $(C^{\otimes p})_{\mathit{eq}} \rightarrow (D^{\otimes p})_{\mathit{eq}}$. Under that quasi-isomorphism, both $x_1^{\otimes p}$ and $x_2^{\otimes p}$ go to $y^{\otimes p}$. Therefore, they must be cohomologous in $(C^{\otimes p})_{\mathit{eq}}$.

The additivity statement can be proved by an explicit formula: if we take
\begin{equation} \label{eq:x1x2}
(x_1+x_2)^{\otimes p} - x_1^{\otimes p} - x_2^{\otimes p}
\end{equation}
and expand it out, we get $2^p - 2$ monomials, which occur in free $(\bZ/p)$-orbits. Take one representative for each orbit, add them up, and multiply the outcome by $\theta$. This yields a cochain in $(C^{\otimes p})_{\mathit{eq}}$ whose boundary is $t$ times \eqref{eq:x1x2}, up to sign.
\end{proof}

Finally, we return to the example of $S$. Take the cellular chain complex and reverse its grading, to make it cohomological. Then, on $C_{-*}(S)_{\mathit{eq}}$ we have
\begin{align}
&
d_{\mathit{eq}}(P_0\, t^k) = 0, \;\;
d_{\mathit{eq}}(P_0\, t^k\theta) = 0,
\\ &
d_{\mathit{eq}}(P_0\, t^k) = 0, \;\;
d_{\mathit{eq}}(P_0\, t^k\theta) = 0,
\\ &
d_{\mathit{eq}}(\sigma^j L_1\, t^k) = (Q_0 - P_0) t^k - (\sigma^{j+1} L_1 - \sigma^j L_1) t^k\theta,
\\ &
d_{\mathit{eq}}(\sigma^j L_1\, t^k\theta) = (Q_0 - P_0) t^k\theta - (L_1 + \cdots + \sigma^{p-1}L_1) t^{k+1},
\\ &
d_{\mathit{eq}}(\sigma^j B_2\, t^k) = -(\sigma^{j+1} L_1 - \sigma^j L_1) t^k + (\sigma^{j+1} B_2 - \sigma^j B_2) t^k\theta,
\\ &
d_{\mathit{eq}}(\sigma^j B_2\, t^k\theta) = -(\sigma^{j+1} L_1 -\sigma^j L_1) t^k\theta + (B_2 + \cdots + \sigma^{p-1}B_2) t^{k+1}.
\end{align}
With $\bF_p$-coefficients, we have the following analogue of Lemma \ref{lemma:homologyrelations}, proved in the same way:

\begin{lemma} \label{eq:cohomology-relations}
The following cohomology relations hold in $C_{-*}(S)_{\mathit{eq}}$:
\begin{align}
& \label{eq:coh-localize-1}
(P_0 - Q_0) t^k \sim (B_2 + \sigma B_2 + \cdots + \sigma^{p-1}B_2) t^{k+1},
\\ \label{eq:coh-localize-2}
&
(P_0 - Q_0) t^k\theta \sim (B_2 + \sigma B_2 + \cdots + \sigma^{p-1}B_2) t^{k+1}\theta.
\end{align}
\end{lemma}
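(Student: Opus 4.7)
The plan is to construct explicit cochains in $C_{-*}(S)_{\mathit{eq}}$ whose $d_{\mathit{eq}}$-coboundaries realize the two stated relations, in direct parallel to the proof of Lemma \ref{lemma:homologyrelations}. The dictionary is simple: boundaries in the chain complex are replaced by coboundaries, and the roles played there by the formulas \eqref{eq:cellular-product-3}--\eqref{eq:cellular-product-6} are now played by the explicit equivariant differentials listed immediately before the lemma.

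For \eqref{eq:coh-localize-1}, I would combine $L_1\, t^k$ with $B_2\, t^k\theta$. Applying $d_{\mathit{eq}}$ to the first produces the desired $(Q_0-P_0)\, t^k$ together with an unwanted mixed term involving $(\sigma L_1 - L_1)\, t^k\theta$; applying $d_{\mathit{eq}}$ to the second produces a matching $(L_1 - \sigma L_1)\, t^k\theta$ term together with the target sum $\sum_i \sigma^i B_2\, t^{k+1}$, coming from the cellular differential $dB_2 = L_1 - \sigma L_1$ and the $(-1)^{|B_2|}$ contribution of the general equivariant differential formula. A suitable signed combination of these two cochains cancels the mixed term and yields precisely the desired cohomology relation, up to an overall sign absorbed by the $\mathbb{F}_p$-coefficients.

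For \eqref{eq:coh-localize-2}, the analogy is with \eqref{eq:1st}. Here $d_{\mathit{eq}}(L_1\, t^k\theta)$ contains the term $\pm\sum_i \sigma^i L_1\, t^{k+1}$, which cannot be cancelled by any single $\sigma^j B_2\, t^{k+1}$. I would instead use the weighted combination $\sum_{j=0}^{p-1} j\, \sigma^j B_2\, t^{k+1}$ and invoke the Abel-summation identity $\sum_{j=0}^{p-1} j\,(\sigma^{j+1}\xi - \sigma^j\xi) \equiv -\sum_i \sigma^i \xi \pmod{p}$, exactly as in Lemma \ref{lemma:homologyrelations}. This converts $\sum_j j\, d_{\mathit{eq}}(\sigma^j B_2\, t^{k+1})$ into terms proportional to $\sum_i \sigma^i L_1\, t^{k+1}$ and $\sum_i \sigma^i B_2\, t^{k+1}\theta$, which allows one to cancel the $L_1$ contribution from $d_{\mathit{eq}}(L_1\, t^k\theta)$ and leave the target $\sum_i \sigma^i B_2\, t^{k+1}\theta$.

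The main obstacle is purely bookkeeping: tracking the Koszul signs $(-1)^{|x|}$ appearing in the equivariant differential, and the signs of the cellular differentials $dL_1 = Q_0 - P_0$ and $dB_2 = L_1 - \sigma L_1$. No new conceptual ingredient is needed beyond the Abel-summation trick already used in Lemma \ref{lemma:homologyrelations}; indeed the whole statement is the formal dual of that earlier calculation, which is why the paper can reasonably assert that it is \emph{proved in the same way}.
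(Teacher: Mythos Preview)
Your proposal is correct and follows exactly the route the paper intends: for \eqref{eq:coh-localize-1} you combine $L_1\,t^k$ with $B_2\,t^k\theta$ (the cohomological analogue of \eqref{eq:cellular-product-4}--\eqref{eq:cellular-product-5}), and for \eqref{eq:coh-localize-2} you combine $L_1\,t^k\theta$ with the weighted sum $\sum_j j\,\sigma^j B_2\,t^{k+1}$ and invoke the Abel-summation identity, dual to \eqref{eq:1st} and \eqref{eq:cellular-product-3}. One small caution: the phrase ``sign absorbed by the $\bF_p$-coefficients'' is imprecise for $p>2$, but as you note this is pure bookkeeping and the cochains you name do the job once the signs are tracked.
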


\section{Basic moduli spaces}
This section introduces the relevant moduli spaces of pseudo-holomorphic curves, in their most basic form. This means that we look at a version of the small quantum product, and one of its properties, the divisor equation. Like the previous section, this should be considered as a toy model which introduces some ideas that will recur in more complicated form later on.

\subsection{\label{subsec:moduli-spaces-of-holomorphic-curves}}
Let $M^{2n}$ be a weakly monotone closed symplectic manifold. Choose a Morse function $f$ and metric $g$, so that the associated gradient flow is Morse-Smale. Our terminology for stable and unstable manifolds is that $\mathrm{dim}(W^s(x)) = |x|$ is the Morse index, whereas $\mathrm{dim}(W^u(x)) = 2n - |x|$. 

\begin{definition}
Fix some compatible almost complex structure $J$. A {\it $J$-holomorphic chain of length $l$} is a set of maps $$u_1, \dots, u_l: \bC P^1 \rightarrow M$$ such that $\overline{\partial}_{\mathit{J}} u = 0$, and such that 
\begin{equation}
u_k(\infty) = u_{k+1}(0) \text{ for } k=1,\dots,l-1.
\end{equation}
We call such a chain {\it simple} if each of the maps is simple (non-multiply-covered and non-constant) and no two of the maps are reparametrisations of each other. Two simple chains are called equivalent if they are related by reparametrisations $(\phi_1,\dots,\phi_l)$ of each component, such that $\phi_k(0) = 0$ and $\phi_k(\infty) = \infty$. The moduli space of simple chains representing some class $A \in H_2(M;\bZ)$ is denoted by $\scrM_A(\mathit{chain},l)$. It comes with evaluation maps at the ``endpoints of the chains'', which send $(u_1,\dots,u_l)$ to $u_1(0)$ and $u_l(\infty)$, respectively.
\end{definition}

\begin{assumption} \label{th:regular-1}
We fix some compatible almost complex structure $J$ with the following properties.
\begin{itemize} \itemsep.5em
\item[(i)]
All spaces $\scrM_A(\mathit{chain},l)$ are regular.
\item[(ii)]
On those spaces, the evaluation maps $(u_1,\dots,u_l) \mapsto u_1(0)$ are transverse to the stable and unstable manifolds of our Morse function. 
\end{itemize}
\end{assumption}

Assumption \ref{th:regular-1} is satisfied for generic choice of $J$. The simplest aspect is the $l = 1$ case of (i), which is just generic regularity of simple $J$-holomorphic spheres (because of the weak monotonicity condition, this also implies the absence of spheres with negative Chern number). The general form of (i) is a version of \cite[Definition 6.2.1]{mcduff-salamon} (using chains rather than general trees), and is generically satisfied by \cite[Theorem 6.2.6]{mcduff-salamon}. The transversality theory for evaluation maps developed there also yields the genericity of (ii).

Our main moduli space uses a specific $(p+2)$-marked sphere as the domain. We introduce specific notation for it: taking $\zeta^{1/2} = e^{\pi i/p}$, set
\begin{equation} \label{eq:c-curve}
\begin{aligned}
& C = \bC P^1, \\[-.5em] &
z_{C,0} = 0,\; z_{C,1} = \zeta^{1/2},\; z_{C,2} = \zeta^{3/2},\; \dots,\; z_{C,p} = \zeta^{(2p-1)/2} = \zeta^{-1/2}, \; z_{C,\infty} = \infty.
\end{aligned}
\end{equation}
An inhomogeneous term is a $J$-complex anti-linear vector bundle map $\nu_C: TC \rightarrow TM$, where both bundles involved have been pulled back to $C \times M$, such that $\nu_C$ is zero near the marked points \eqref{eq:c-curve}. The associated inhomogeneous Cauchy-Riemann equation is
\begin{equation} \label{eq:cauchy-riemann}
\begin{aligned} & u: C \longrightarrow M, \\
& (\bar\partial_J u)_z = \nu_{C,z,u(z))}.
\end{aligned}
\end{equation}
Given critical points $x_0,\dots,x_p,x_\infty$ of $f$, we consider solutions of \eqref{eq:cauchy-riemann} with incidence conditions at the (un)stable manifolds:
\begin{equation} \label{eq:incidence}
u(z_{C,0}) \in W^u(x_0),\; \dots,\; u(z_{C,p}) \in W^u(x_p),\;
u(z_{C,\infty}) \in W^s(x_{\infty}).
\end{equation}
It is maybe better to think of this as having gradient half-flowlines 
\begin{equation} \label{eq:half-flow-lines}
\begin{aligned}
& y_0,\dots,y_p: (-\infty,0] \longrightarrow M, \\
& y_k' =  \nabla f(y_k), \\
& y_k(0) = u(z_{C,k}), \\
& \textstyle\lim_{s \rightarrow -\infty} y_k(s) = x_k
\end{aligned}
\qquad \text{and} \qquad
\begin{aligned}
& y_\infty: [0,\infty) \longrightarrow M, \\
& y_\infty' = \nabla f(y_\infty), \\
& y_\infty(0) = u(z_{C,\infty}), \\
& \textstyle\lim_{s \rightarrow \infty} y_\infty(s) = x_\infty.
\end{aligned}
\end{equation}

\begin{assumption} \label{th:regular-2}
We impose the following requirements:
\begin{itemize} \itemsep.5em
\item[(i)] The moduli space of solutions of \eqref{eq:cauchy-riemann}, \eqref{eq:incidence} is regular.

\item[(ii)] Take an element in the same space, with a simple $J$-holomorphic bubble attached at an arbitrary point. This means that we have a pair $(u,u_0)$ with $u$ as in \eqref{eq:cauchy-riemann}, \eqref{eq:incidence}, a point $z \in C$, and a simple $J$-holomorphic $u_0: \bC P^1 \rightarrow M$ with $u(z) = u_0(0)$. We want this moduli space to be regular as well.

\item[(iii)] Consider solutions with a simple holomorphic chain attached at each of a subset of the $(p+2)$ marked points, and incidence constraints transferred accordingly. For simplicity, let's spell out what this means only in the case of a single chain, attached at $z_{C,\infty}$. In that case, we have a solution of \eqref{eq:cauchy-riemann}, and a simple holomorphic chain $(u_1,\dots,u_l)$, with the conditions
\begin{equation}
\begin{aligned}
& u(z_{C,0}) \in W^u(x_0),\;\dots,\; u(z_{C,p}) \in W^u(x_p), \\
& u(z_{C,\infty}) = u_1(0), \; u_l(\infty) \in W^s(x_\infty).
\end{aligned}
\end{equation}
We require that the resulting moduli space should be regular. In the general case where there are several marked points with a chain attached to each, we transfer the adjacency condition involving (un)stable manifolds to the end of the respective chain.
\end{itemize}
\end{assumption}

This assumption are satisfied for a generic choice of inhomogeneous term (where $J$ is assumed chosen as in Assumption \ref{th:regular-1}), following the argument from \cite[Chapter 8]{mcduff-salamon}. A few comments may be appropriate. In (ii), the bubble may be attached at one of the marked points. Let's say that this point is $z_{C,\infty}$, in which case we have
\begin{equation}
u(z_{C,\infty}) = u_0(0) \in W^s(x_\infty).
\end{equation}
Assumption \ref{th:regular-1}(ii), for $l = 1$, says that the subspace of maps $u_0$ satisfying $u_0(0) \in W^s(x_\infty)$ is regular. What we want to achieve is that the evaluation map on that subspace is transverse to $u \mapsto u(z_{C,\infty})$. This is clearly satisfied for generic $\nu_C$. In the same way, genericity of (iii) depends on Assumption \ref{th:regular-1}(ii), but this time for arbitrary $l$.

Given $A \in H_2(M;\bZ)$, let $\scrM_A(C,x_0,\dots,x_p,x_\infty)$ be the space of solutions of \eqref{eq:cauchy-riemann}, \eqref{eq:incidence} such that $u$ represents $A$. Given our regularity requirement, this is a manifold of dimension
\begin{equation} \label{eq:virtual-dimension}
\mathrm{dim}\, \scrM_A(C,x_0,\dots,x_p,x_\infty) = 2c_1(A)+|x_\infty|-|x_0|-\cdots-|x_p|.\end{equation}
We denote by $\bar\scrM_A(C,x_0,\dots,x_p,x_\infty)$ the standard compactification. On the pseudo-holomorphic map side, this involves the stable map compactification, and on the Morse-theoretic side one allows the flow lines to break. Details are in \cite[Section 5]{schwarz} (for illustration, see Figure $3$ there). To make the exposition more self-contained, we recall here that a point of the compactification consists of:

\begin{itemize} \itemsep.5em
\item A genus zero nodal Riemann surface $\hat{C}$ with $(p+2)$ smooth marked points $z_{\hat{C},0},\dots,z_{\hat{C},\infty}$. One of the irreducible components of that surface is distinguished, and identified with $C$ in a preferred way. Moreover, if one collapses all the other components (usually called bubble components), and transfers the marked points along with the collapse, then those marked points will end up in the same positions as in \eqref{eq:c-curve}. In other words, if $z_{\hat{C},k}$ does not lie on the distinguished component, then it must lie on a bubble tree attached to that component at $z_{C,k}$.
\item A map $\hat{u}: \hat{C} \rightarrow M$ which, on the distinguished component, is a solution of \eqref{eq:cauchy-riemann}, and on the other components, is a $J$-holomorphic map. Moreover, those $J$-holomorphic maps must be stable (if they are constant on some non-distinguished component, then that component must have at least three special points). Finally, the map $\hat{u}$ still represents the homology class $A$.
\item For each $k \in \{0,\dots,p\}$, a finite sequence of gradient flow lines $\hat{y}_{k,0}: \bR \rightarrow M, \dots, \hat{y}_{k,m_k-1}: \bR \rightarrow M$, $\hat{y}_{k,m_k}: (-\infty,0] \rightarrow M$ (all but the last should be non-constant). These should satisfy $$\begin{array}{l}
    \lim_{s \rightarrow -\infty} \hat{y}_{k,0}(s) = x_k, \\ 
    \lim_{s \rightarrow +\infty} \hat{y}_{k,j}(s) = \lim_{s \rightarrow -\infty} \hat{y}_{k,j+1}(s),  
    \\
    \hat{y}_{k,m_k}(0) = \hat{u}(z_{\hat{C},k}) 
\end{array}$$
\item Similarly, gradient flow lines $\hat{y}_{\infty,0}: [0,\infty) \rightarrow M$, $\hat{y}_{\infty,1}: \bR \rightarrow M$, \dots, $\hat{y}_{\infty,m_\infty}: \bR \rightarrow M$. Here, the conditions are that $$\begin{array}{l}
    \hat{y}_{\infty,0}(0) = \hat{u}(z_{\hat{C},\infty}), \\
    \lim_{s\rightarrow +\infty} \hat{y}_{\infty,j}(s) = \lim_{\rightarrow -\infty} \hat{y}_{\infty,j+1}(s), \\
    \lim_{s \rightarrow +\infty} \hat{y}_{\infty,m_\infty}(s) = x_\infty.
\end{array}$$
\end{itemize}  

%**REF(6)**It is mentioned here that we are using the stable map compactification and breaking of flowlines, both of which are standard. Reviewer is mistaken??

\begin{lemma} \label{th:smooth-1}
(i) If the dimension \eqref{eq:virtual-dimension} is $0$, we have 
\begin{equation}
\scrM_A(C,x_0,\dots,x_p,x_\infty) = \bar\scrM_A(C,x_0,\dots,x_p,x_\infty), 
\end{equation}
which means that the moduli space is a finite set. 

(ii) If the dimension is $1$, the compactification is a manifold with boundary, with the interior being the space $\scrM_A(\cdots)$; the boundary points involve no bubbling, and only once-broken gradient flow lines.
\end{lemma}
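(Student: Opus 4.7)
The plan is to enumerate the strata of $\bar\scrM_A(C,x_0,\dots,x_p,x_\infty)\setminus \scrM_A(C,x_0,\dots,x_p,x_\infty)$ arising from the two compactification mechanisms — bubbling of $J$-holomorphic spheres off the main component, and breaking of the gradient half-flowlines $y_0,\dots,y_p,y_\infty$ — and then bound their codimensions using Assumption \ref{th:regular-1}, Assumption \ref{th:regular-2}, and weak monotonicity. Any point of the compactification is described by a principal map $u\colon C\to M$ of some class $A-A_0$ solving \eqref{eq:cauchy-riemann}, a collection of holomorphic sphere trees carrying the remaining class $A_0$ attached at interior points of $C$ or at the marked points $z_{C,0},\dots,z_{C,p},z_{C,\infty}$, and with each half-flowline in \eqref{eq:half-flow-lines} possibly replaced by a broken concatenation.

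Next I count codimensions. Each breaking of a half-flowline contributes real codimension $1$, by the standard Morse-theoretic count at a Morse--Smale intersection. Each simple bubble sphere contributes real codimension $2$: on the interior of $C$ this is the usual count for the stable-map compactification (one new node imposes $2n$ matching conditions while freeing the $2n-2+2c_1(A_0)$ parameters of a pointed simple $A_0$-sphere plus the $2$ parameters of the attaching point), and regularity of the resulting stratum is exactly what Assumption \ref{th:regular-2}(ii) supplies. For sphere chains attached at the marked points, the same count iterated along the chain gives real codimension $2l$ for a chain of length $l$, and regularity of these strata is Assumption \ref{th:regular-2}(iii). Multiply covered bubbles are dealt with by the standard Hofer--Salamon argument: any such cover factors through a simple sphere whose unparametrized moduli space, by Assumption \ref{th:regular-1}, is cut out transversally of dimension $2n+2c_1(A_0')-6$, and the image evaluation stratum has codimension controlled by that formula; weak monotonicity then forces $c_1(A_0')\ge 0$ on simple classes that could appear, so every bubbling stratum — simple or multiply covered — has codimension $\ge 2$.

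Putting these bounds together yields both claims. In case (i), the virtual dimension is $0$, so every stratum of positive codimension is empty and $\bar\scrM_A=\scrM_A$; combined with Gromov compactness this is a compact $0$-dimensional manifold, hence a finite set. In case (ii), the virtual dimension is $1$, so the only codimension-$1$ boundary strata are those with exactly one broken half-flowline and no bubbling; any bubble contributes codimension $\ge 2$ and thus has negative virtual dimension. The manifold-with-boundary structure near once-broken flowlines is standard Morse-theoretic gluing, applied with the holomorphic component $u$ varying in a regular stratum.

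The main technical obstacle is handling multiply covered bubbles, since Assumptions \ref{th:regular-1} and \ref{th:regular-2} only regularize simple objects. Without weak monotonicity, a multi-cover of a negative-$c_1$ simple sphere could in principle produce a spurious codimension-$1$ boundary stratum; weak monotonicity rules this out, which is precisely why it appears as a standing hypothesis on $M$. Once this reduction is in hand, the remainder of the argument is an essentially combinatorial codimension tally.
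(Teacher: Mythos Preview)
Your proposal is correct and follows essentially the same approach as the paper's sketch: a codimension count using Assumptions \ref{th:regular-1}--\ref{th:regular-2} together with weak monotonicity to rule out bubbling in codimension $\le 1$. The paper organizes the case analysis slightly differently---by whether the marked points remain on the principal component after collapsing ghost bubbles---and is more explicit about the simplification process (collapse ghosts, replace multiply covered components by their underlying simple maps, discard redundant components) that reduces an arbitrary stable-map limit to one of the specific configurations covered by Assumption \ref{th:regular-2}(ii) or (iii), but this is presentational rather than substantive.
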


\begin{proof}[Sketch of proof]
%***REF(&)***vvv below alternative proof, might make it clearer to the reader that this is standard stuff?
The proof is in \cite[Theorem 3.4]{schwarz} for the $0$-dimensional case, and \cite[Section 3.3]{schwarz} for the $1$-dimensional case. We will summarize it here. Recall that when compactifying the moduli space, what can occur is a mixture of Gromov compactification and breaking of Morse flowlines. Take a limit point in the form discussed above, assuming for simplicity that there is no breaking of Morse flow lines ($m_0 = \cdots = m_k = m_\infty = 0$). Collapse all the bubble components which carry constant $J$-holomorphic maps (called ghost components). Then carry out the following further simplifications:
\begin{itemize} \itemsep.5em
\item Suppose that after that initial collapse of constant components, all marked points come to lie on the distinguished component. In that case, we forget all bubbles except for one, which carries a nonconstant $J$-holomorphic map that intersects the image of the distinguished component at some point (these must be such a bubble). Finally, we also replace the map on that bubble component by its underlying simple map. That puts us in the situation of Assumption \ref{th:regular-2}(ii), where $(u,u_0)$ represents some class whose Chern number is less than equal that of $A$.

\item Take the other case (after the initial collapse, at least one marked point does not lie on the distinguished component). In that case, we forget any bubble tree that carries no marked points. This leaves only the distinguished component and at most one holomorphic chain attached at each of its $(p+2)$ marked points; the component of that chain which is furthest from the distinguished component will carry the marked point. As before, we replace all multiply covered bubbles with the underlying simple maps. Moreover, if two holomorphic maps $u_i,u_j$ with $i<j$ in one chain are reparametrisations of each other, we remove the bubbles carrying $u_i,\dots,u_{j-1}$. After this, all attached holomorphic chains are simple, and we are in the situation of Assumption \ref{th:regular-2}(iii), with at least one nontrivial bubble chain, and where again the Chern number has not increased from that of the original $A$.
\end{itemize}
All these simplified limits have codimension $\geq 2$, hence cannot occur in the moduli spaces under consideration. The case that includes Morse-theoretic breaking is similar, and we will not discuss it further.
\end{proof}
\begin{figure}
\begin{centering}
\includegraphics[scale = 0.8]{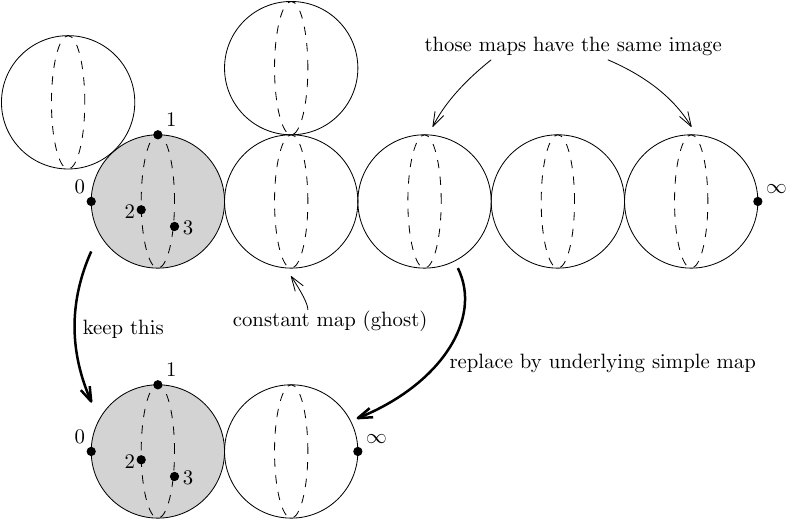}
\caption{Simplification process from the proof of Lemma \ref{th:smooth-1}, for $p = 3$. The stable map at the top (with $7$ components, and where the principal component is shaded) yields a solution of \eqref{eq:cauchy-riemann} with a length $1$ simple chain attached.}
\label{fig:cancel}
\end{centering}
\end{figure}

Given some coefficient field $\bF$, we denote by $\bF_x$ the one-dimensional vector space generated by orientations of $W^s(x)$, where the sum of the two orientations is zero. The Morse complex is
\begin{equation}
\mathit{CM}^k(f) = \bigoplus_{|x| = k} \bF_x.
\end{equation}
A choice of orientations of $W^s(x_0),\dots,W^s(x_p), W^s(x_\infty)$ determines an orientation of the moduli space $\scrM_A(C,x_0,\dots,x_p,x_\infty)$. In particular, every point in a zero-dimensional moduli space gives rise to a preferred isomorphism (an abstract version of a $\pm 1$ contribution)
$\bF_{x_0} \otimes \cdots \otimes \bF_{x_p} \iso \bF_{x_\infty}$. One adds up those contributions to get a map
\begin{equation}
m_A(C,x_0,\dots,x_p,x_\infty): \bF_{x_0} \otimes \cdots \otimes \bF_{x_p} \longrightarrow \bF_{x_\infty},
\end{equation}
and those maps are the coefficients of a chain map 
\begin{equation} \label{eq:qs-map}
S_A: \mathit{CM}^*(f)^{\otimes p+1} \longrightarrow \mathit{CM}^{*-2c_1(A)}(f). 
\end{equation}
Up to chain homotopy, this map is independent of the choice of almost complex structure and inhomogeneous term, by a parametrized version of our previous argument. Of course, the outcome is not in any sense surprising:

\begin{lemma} \label{th:p-plus-one-fold}
Up to chain homotopy, $S_A(x_0,x_1,\dots,x_p)$ is the $A$-contribution to the $(p+1)$-fold quantum product $x_0 \ast x_1 \ast \cdots \ast x_p$.
\end{lemma}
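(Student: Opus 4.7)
The plan is to show that $S_A$ depends (up to chain homotopy) only on $A$, independent of the specific $(p+2)$-marked configuration, and then to degenerate toward a boundary stratum of the Deligne--Mumford compactification where gluing turns $S_A$ into an iterated composition of three-point operations.

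First I would allow the marked points $z_{C,0},\dots,z_{C,p},z_{C,\infty}$ and the inhomogeneous term $\nu_C$ to vary smoothly along a path. The space of smooth $(p+2)$-marked $\bC P^1$'s (the top stratum of $\overline{\mathcal{M}}_{0,p+2}$) is connected, so any two configurations can be joined by a path. The associated parametrized moduli space, with $\nu_C$ chosen generically along the path, is a $1$-dimensional manifold whose boundary (by the same argument as Lemma \ref{th:smooth-1}(ii)) consists of the endpoint moduli spaces together with once-broken Morse trajectories; the resulting count supplies a chain homotopy. In particular, $S_A$ is chain-homotopy invariant of the configuration \eqref{eq:c-curve} and of $\nu_C$.

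Second I would move the configuration to the boundary of $\overline{\mathcal{M}}_{0,p+2}$ along a degeneration that separates the marked points into successive pairs on a chain of $(p-1)$ three-pointed spheres (bubble off $z_{C,0}$ and $z_{C,1}$ onto one component, then $z_{C,2}$ onto the next node, and so on). Choosing $\nu_C$ to vanish near the developing nodes, Gromov compactness and standard gluing identify the limiting moduli space with tuples $(u_1,\dots,u_{p-1})$ of pseudo-holomorphic maps in classes $A_1+\cdots+A_{p-1}=A$, each with three marked points constrained by the (un)stable manifolds at the external points and matching at the nodes. Inserting a Poincar{\'e}-dual basis of $H^*(M;\bF)$ across each node expresses the count in this stratum as the $q^A$ coefficient of the iterated triple product $(\cdots((x_0 \ast x_1) \ast x_2)\cdots) \ast x_p$, which by associativity equals $x_0 \ast x_1 \ast \cdots \ast x_p$ in that coefficient. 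Combining with the first step yields the lemma.

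The main obstacle will be the degeneration step: one has to verify that no additional sphere bubbling or extra Morse breaking appears along the $1$-parameter family of domains beyond what is accounted for by the two endpoints and the identified nodal limit. This is handled by a codimension count using weak monotonicity and the regularity hypotheses of Assumptions \ref{th:regular-1}, \ref{th:regular-2} (extended parametrically), together with a careful choice of gluing profile for $\nu_C$ near the node so that the pregluing / Newton iteration identifies the neighborhood of the boundary stratum with the expected nodal moduli space. All of this is entirely standard for weakly monotone symplectic manifolds, so I would sketch rather than grind through the analytic details.
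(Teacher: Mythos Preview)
Your overall strategy---degenerate the $(p+2)$-marked sphere to a chain of three-pointed components---matches the paper's. However, you are missing a step that the paper explicitly singles out.

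At the nodal limit, the moduli space carries matching conditions $u_k(\text{node}_+) = u_{k+1}(\text{node}_-)$ at each node; this is a fibre product over $M$. The Morse-theoretic iterated quantum product, on the other hand, is a chain map on $\mathit{CM}^*(f)$ defined by summing over intermediate critical points $x$, with incidence conditions $u_k(\infty) \in W^s(x)$ and $u_{k+1}(0) \in W^u(x)$. These two things are \emph{not} equal as chain-level counts: the relation ``diagonal $\sim \sum_x W^u(x) \times W^s(x)$'' holds only up to homology, not on the nose. Your phrase ``inserting a Poincar\'e-dual basis across each node'' is the cohomology-level splitting axiom from Gromov--Witten theory, but it does not by itself produce the chain-level identification you need between the nodal count and the Morse composition $(\cdots((x_0 \ast x_1) \ast x_2)\cdots) \ast x_p$.

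The paper's fix, imported from the proof that the PSS map is an isomorphism, is to insert at each node a finite-length gradient flow line of $f$, replacing the matching condition by $u_{k+1}(0) = \phi^L(u_k(\infty))$ for the time-$L$ flow. Using $L$ as an additional parameter gives a second cobordism: at $L=0$ one recovers the nodal count, and as $L \to \infty$ the flow lines break at critical points, yielding exactly the Morse-theoretic iterated product. So the chain homotopy is built from two concatenated one-parameter families (first the Deligne--Mumford degeneration, then the edge-length parameter); without the second step your argument does not close.
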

\begin{proof}
This is a familiar argument, which involves degenerating $C$ to a nodal curve each of whose components has three marked points, one option being that drawn in Figure \ref{fig:multideg}(i); each component will again carry a Cauchy-Riemann equation with an inhomogeneous term. In our Morse-theoretic context, there is an additional step, familiar from the proof that the PSS map is an isomorphism, such as in \cite[Theorem 6]{katic-milinkovic} (see Fig 2), \cite[Section 4]{albers} (see Fig 6), \cite[Section 4]{lu}, or for more details \cite{katic}. Namely, one adds a length parameter, and inserts a finite length flow line of our Morse function at each node. As the length goes to infinity, each of the flow lines we have inserted breaks, see Figure \ref{fig:multideg}(ii); and that limit gives rise to the Morse homology version of the iterated quantum product. The parametrized moduli space (consisting of, first, the parameter used to degenerate $C$; and then in the second step, using the finite edge-length as a parameter) then yields a chain homotopy between those two operations.
\end{proof}
\begin{figure}
\begin{centering}
\includegraphics{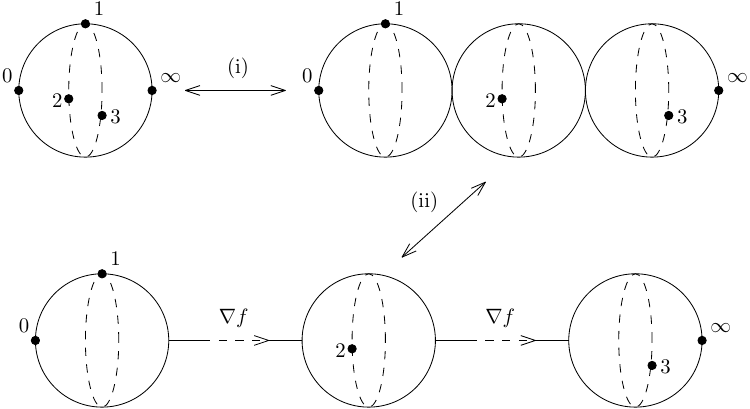}
\caption{\label{fig:multideg}A schematic picture of the proof of Lemma \ref{th:p-plus-one-fold}, with $p = 3$.}
\end{centering}
\end{figure}

\begin{remark} \label{th:negative-energy}
Our use of inhomogeneous terms means that the moduli space could be nonzero for classes $A \in H_2(M;\bZ)$ which do not give rise to monomials in $\Lambda$ (because $\int_A \omega_M$ is either negative, or it's zero but $A \neq 0$). However, by choosing the inhomogeneous term small and using a compactness argument, one can rule out that undesired behaviour for any specific $A$. Since the outcome is independent of the choice up to chain homotopy, the resulting cohomology level structure is indeed defined over $\Lambda$.
\end{remark}

\subsection{\label{subsec:augmented-moduli-spaces-of-holomorphic-curves}}
Fix an oriented codimension $2$ submanifold $\Omega \subset M$. When choosing an almost complex structure, there are additional restrictions:

\begin{assumption} \label{th:regular-1b}
In the situation of Assumption \ref{th:regular-1}, we additionally require that the evaluation map on the space of simple $J$-holomorphic chains should be transverse to $\Omega$.
\end{assumption}

We equip the Riemann surface \eqref{eq:c-curve} with ``an additional marked point which can move freely'' (and which will carry an $\Omega$-incidence constraint). Formally, this means that we consider a family of genus zero nodal curve with sections %**REF(9)**nodal curve replaces surface
\begin{equation} \label{eq:s2-family}
\begin{aligned}
& \scrC \longrightarrow S, \\
& z_{\scrC,0},\dots,z_{\scrC,p},z_{\scrC,\infty},z_{\scrC,*}: S \longrightarrow \scrC
\end{aligned}
\end{equation}
where the parameter space $S$ is again a copy of $\bC P^1$, and such that the following holds:
\begin{itemize} \itemsep.5em
\item
The critical values of \eqref{eq:s2-family} are precisely the marked points from \eqref{eq:c-curve}. If $v$ is a regular value, the fibre $\scrC_v$ is canonically identified with $C$; that identification takes the points $z_{\scrC_r,0},\dots,z_{\scrC_r,p},z_{\scrC_r,\infty}$ arising from \eqref{eq:s2-family} to their counterparts in \eqref{eq:c-curve}, and the remaining point $z_{\scrC_v,*}$ to $v$.

\item
If $v$ is a singular value, $\scrC_v = \scrC_{v,+}\, \cup\, \scrC_{v,-}$ is a nodal surface with two components. The first component $\scrC_{v,+}$ is again identified with $C$, and the second component $\scrC_{v,-}$ is a rational curve attached to the first one at $v$. The first component carries all the marked points that $C$ does, with the exception of the one which is equal to $v$; and the second component carries the two remaining marked points, considered to be distinct and also different from the node (so, the second component has three special points, which identifies it up to unique isomorphism).
\end{itemize}
Explicitly, \eqref{eq:s2-family} is constructed by starting with the trivial family $C \times S \rightarrow S$, and then blowing up the points $(v,v)$, where $v$ is one of the marked points in \eqref{eq:c-curve}. One takes the proper transforms of the constant sections and of the diagonal section, which yield the $z_{\scrC}$'s from \eqref{eq:s2-family}. 

Denote by $\scrC^{\mathit{sing}} \subset \scrC$ the set of $(p+2)$ nodes, and by $\scrC^{\mathit{reg}}$ its complement. We write $T(\scrC^{\mathit{reg}}/S)$ for the fibrewise tangent bundle, which is a complex line bundle on $\scrC^{\mathit{reg}}$.
A fibrewise inhomogeneous term on $\scrC$ is a complex anti-linear map $\nu_{\scrC}: T(\scrC^{\mathit{reg}}/S) \rightarrow TM$, where both bundles involved have been pulled back to $\scrC^{\mathit{reg}} \times M$, and with the property that $\nu_{\scrC}$ is zero outside a compact subset (meaning, in a neighbourhood of $\scrC^{\mathit{sing}} \times M \subset \scrC \times M$). Suppose that we have chosen such a term. One can then consider the moduli space of pairs $(v,u)$, where
\begin{equation} \label{eq:parametrized-cauchy-riemann}
\begin{aligned}
& v \in S, \; u: \scrC_v \longrightarrow M, \\
& (\bar\partial_J u)_z = \nu_{\scrC_v,z,u(z)}.
\end{aligned}
\end{equation}
In the case where $\scrC_v$ has a node, the second equation is imposed separately on each of its components (with the assumption that both preimages of the node must be mapped to the same point, so as to constitute an actual map on $\scrC_v$). This makes sense since, near each of the preimages of the node, the equation reduces to the ordinary $J$-holomorphic curve equation. The incidence conditions are
\begin{equation} \label{eq:added-incidence}
u(z_{\scrC_v,0}) \in W^u(x_0),\; \dots,\; u(z_{\scrC_v,p}) \in W^u(x_p),\;
u(z_{\scrC_v,\infty}) \in W^s(x_{\infty}), \; u(z_{\scrC_v,*}) \in \Omega.
\end{equation}

\begin{assumption} \label{th:regular-2b}
We impose the following requirements:
\begin{itemize} \itemsep.5em
\item[(i)] 
The space of all solutions of \eqref{eq:parametrized-cauchy-riemann}, \eqref{eq:added-incidence} should be regular. This should be understood as two distinct conditions: on the open set of regular values $v$, regularity holds in the parametrized sense; and for each singular value $v$, it holds in the ordinary unparametrized sense. 

\item[(ii)] Take an element $(v,u)$ in the same space, with a simple $J$-holomorphic bubble attached at an arbitrary point, in the same sense as in Assumption \ref{th:regular-2}(ii) (the attaching point can be a marked point, or even the node if $v$ is singular). Then, that moduli space should be regular as well. As in (i), this should be interpreted as two different conditions, depending whether $v$ is regular or not.

\item[(iii)] Consider solutions for regular $v$, which have a simple holomorphic chain attached at a subset of the $(p+3)$ marked points, and where the incidence constraint has been transferred to the end of that chain, as in Assumption \ref{th:regular-2}(iii). Then, the resulting moduli space should again be regular.

\item[(iv)] Take a singular $v$, We look at a situation similar to (iii), but where additionally, there may be a simple holomorphic chain separating the two components of $\scrC_v$. Let's spell out what that means (ignoring the possible existence of chains at the marked points). Write $z_{\pm} \in \scrC_{v,\pm}$ for the preimages of the node. In the definition of the moduli space \eqref{eq:parametrized-cauchy-riemann}, the $\scrC_{v,\pm}$ carry maps $u_{\pm}$ which necessarily satisfy $u_-(z_-) = u_+(z_+)$. However, in our limiting situation, we instead have a simple chain $(u_1,\dots,u_l)$ such that
\begin{equation}
u_-(z_-) = u_1(0), \; u_+(z_+) = u_l(\infty).
\end{equation}
Again, we require that the resulting space should be regular.
\end{itemize}
\end{assumption}

As before, given $A \in H_2(M;\bZ)$, we write $\scrM_A(\scrC,x_0,\dots,x_\infty,\Omega)$ for the space of solutions of \eqref{eq:parametrized-cauchy-riemann}, \eqref{eq:added-incidence} representing $A$. The added parameter $v \in S$ compensates exactly for the evaluation constraint at $z_{\scrC,*}$, so that we get the same expected dimension as before,
\begin{equation} \label{eq:virtual-dimension-2}
\mathrm{dim}\, \scrM_A(\scrC,x_0,\dots,x_p,x_\infty,\Omega) = 2c_1(A)+|x_\infty|-|x_0|-\cdots-|x_p|.
\end{equation}
Concerning the analogue of the stable map compactification, we have a version of Lemma \ref{th:smooth-1} (with essentially the same proof):

\begin{lemma} \label{th:smooth-2}
(i) If the dimension \eqref{eq:virtual-dimension-2} is $0$, we have a finite set
\begin{equation}
\scrM_A(\scrC,x_0,\dots,x_p,x_\infty,\Omega) = \bar\scrM_A(\scrC,x_0,\dots,x_p,x_\infty,\Omega).
\end{equation}

(ii) If the dimension is $1$, the compactification is a manifold with boundary, with the boundary points only involving once-broken gradient flow lines. 

In both cases (i) and (ii), the moduli space and its compactification contain only points where $v$ is a regular value.
\end{lemma}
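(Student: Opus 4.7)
The plan is to adapt the proof of Lemma \ref{th:smooth-1} to the enlarged setup, using Assumption \ref{th:regular-2b} in place of Assumption \ref{th:regular-2}. Points of $\bar\scrM_A(\scrC,x_0,\dots,x_p,x_\infty,\Omega)$ are described by the parametrized stable map compactification: a parameter $v \in S$; a nodal extension of $\scrC_v$ by bubble trees (attached at marked points of $\scrC_v$ or, if $v$ is a singular value, also at the node between $\scrC_{v,+}$ and $\scrC_{v,-}$); a stable map satisfying \eqref{eq:parametrized-cauchy-riemann} on the principal components and the ordinary $J$-holomorphic equation on the bubble components; the incidence and $\Omega$-constraints transferred appropriately; and possibly broken gradient flow lines at the $x_k$ and $x_\infty$. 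I would enumerate the resulting degeneration types and show that, beyond a single Morse flow-line breaking (codimension $1$), everything else has codimension $\geq 2$.

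For regular $v$, sphere bubbling at arbitrary points and chains attached at the $p+3$ marked points are controlled exactly as in Lemma \ref{th:smooth-1}: after the usual simplification (collapse ghost components, replace multi-covers by their underlying simple maps, and discard redundant components as in Figure \ref{fig:cancel}), any such configuration lands in one of the regular moduli spaces listed in Assumption \ref{th:regular-2b}(ii)--(iii), and each attached simple chain or bubble costs codimension $\geq 2$.

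The new ingredient is the singular-$v$ locus, which is a finite subset of codimension $2$ in $S$. A direct index count -- summing the contributions of $u_+$ on $\scrC_{v,+}$, of $u_-$ on the three-pointed bubble $\scrC_{v,-}$ carrying $z_{\scrC_v,*}$ and the original marked point that collided with $v$, and subtracting $2n$ for the node matching -- shows that the fiberwise moduli space at singular $v$ has expected dimension \eqref{eq:virtual-dimension-2}$\,-2$; by the unparametrized regularity required at singular values in Assumption \ref{th:regular-2b}(i), this expected dimension is achieved. A separating chain of length $l$ as in Assumption \ref{th:regular-2b}(iv) adds a further $2l$ to the codimension (again by a direct index count, comparing a single node match to $l$ simple 2-pointed spheres with two matches at the ends), and attached chains at the marked points cost additional codimension.

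Combining these counts: in dimension $0$, neither bubbling, nor singular $v$, nor flow-line breaking can occur, so the moduli space coincides with its compactification and is a finite set, proving (i); in dimension $1$, only a single Morse-theoretic breaking appears on the boundary, and $v$ stays a regular value throughout, proving (ii). I expect the main obstacle to be the singular-$v$ bookkeeping: verifying carefully that each of the configurations allowed by Assumption \ref{th:regular-2b}(i) and (iv) picks up codimension $\geq 2$ relative to the open stratum, so that in the relevant low dimensions the moduli space contains only points over regular values of $v$.
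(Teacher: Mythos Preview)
Your proposal is correct and follows the same approach the paper intends: the paper simply says this is ``a version of Lemma \ref{th:smooth-1} (with essentially the same proof)'' and gives no further details, whereas you have spelled out the adaptation, including the new codimension-$2$ analysis at singular values of $v$ governed by Assumption \ref{th:regular-2b}(i) and (iv). Your index counts for the singular fibre and for the separating chain are the right ones (and in fact after the simplification step the codimension can only increase, since each simple sphere has nonnegative Chern number by weak monotonicity), so the low-dimensional strata are excluded exactly as you describe.
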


We define $m_A(\scrC,x_0,\dots,x_p,x_\infty,\Omega)$ to be the signed count of points in the zero-dimensional moduli spaces. As before, one can assemble these into a chain map
\begin{equation} \label{eq:qp-map}
P_{A,\Omega}: \mathit{CM}^*(f)^{\otimes p+1} \longrightarrow \mathit{CM}^{*-2c_1(A)}(f).
\end{equation}
Up to chain homotopy, this is independent of the choices of $J$ and $\nu_{\scrC}$, and also depends only on $[\Omega] \in H^2(M;\bZ)$.

\subsection{}
The remaining topic in this section is the analogue of the divisor axiom. As one would expect, this is not particularly difficult, but requires a bit of technical discussion around forgetting a marked point. For the submanifold $\Omega$, we want to assume that it is transverse to the stable and unstable manifolds of the Morse function.

\begin{lemma} \label{th:smooth-1b}
In the situation of Lemma \ref{th:smooth-1}, the following holds generically: any map $u$ in a zero-dimensional space $\scrM_A(C,x_0,\dots,x_p,x_\infty)$ intersects $\Omega$ transversally, and moreover, all those intersections happen away from the marked points. The same is true within the smaller space of those $\nu_C$ which vanish close to the marked points.
\end{lemma}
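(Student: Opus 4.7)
The plan is a standard Sard--Smale transversality argument over the space of inhomogeneous terms, separating into (a) transversality of $u$ with $\Omega$ at points $z$ away from the marked points, and (b) generic emptiness of $u(z_{C,k}) \in \Omega$ at each marked point.

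For (a), I would form the universal moduli space $\mathcal{U}$ of pairs $(\nu_C,u)$ solving \eqref{eq:cauchy-riemann}--\eqref{eq:incidence} and representing $A$, and consider the total evaluation map $\mathrm{ev}: \mathcal{U} \times (C \setminus \{z_{C,0},\ldots,z_{C,\infty}\}) \to M$, $(\nu_C,u,z) \mapsto u(z)$. Local variations of $\nu_C$ in a small neighbourhood of $(z,u(z))$ make $\mathrm{ev}$ a submersion, hence transverse to $\Omega$. Applying Sard--Smale then gives, for a Baire-comeager set of $\nu_C$, transversality of the restricted evaluation $\scrM_A(C,\ldots) \times (C \setminus \{\text{marked points}\}) \to M$ to $\Omega$. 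Since $\dim \scrM_A = 0$ and $\dim C = 2 = \mathrm{codim}_M\Omega$, the transverse preimage is a finite set of points at which $u$ meets $\Omega$ transversally in $M$.

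For (b), for each $k$ I would consider the augmented moduli space with the extra incidence $u(z_{C,k}) \in \Omega$. By the assumed transversality of $\Omega$ to $W^{s/u}(x_k)$, the intersection $\Omega \cap W^{s/u}(x_k)$ has codimension $2$ inside $W^{s/u}(x_k)$, so the expected dimension of the augmented space drops from $0$ to $-2$. A parallel universal moduli space argument, in which $\nu_C$ is perturbed at some interior point of $C$ and the resulting variation is propagated to $u(z_{C,k})$ through the linearized Cauchy--Riemann operator, then shows this augmented space is generically empty. Combining (a) and (b) yields the first assertion.

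The refinement to $\nu_C$ vanishing near the marked points is automatic for (a), since the perturbations of $\nu_C$ required for transversality only need to be localized near interior points $z \in C$. For (b), the same argument applies once one knows that perturbations of $\nu_C$ supported away from $z_{C,k}$ can still produce arbitrary variations of the value $u(z_{C,k})$. I expect this last input to be the main technical point: it is the standard non-local surjectivity property of the parametrized linearized Cauchy--Riemann operator, arguing via unique continuation that the image of the linearization in $T_{u(z_{C,k})}M$ is all of $T_{u(z_{C,k})}M$ even under the constraint on the support of $\nu_C$. Once this is in hand, no new ideas beyond those in the proof of Lemma \ref{th:smooth-1} are required.
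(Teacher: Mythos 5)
Your proposal is correct and takes essentially the same approach as the paper: part (a) is the standard Sard--Smale transversality of evaluation maps, and part (b) is exactly the paper's remark that an intersection at a marked point would amount to an incidence constraint with $\Omega \cap W^{s}(x)$ or $\Omega \cap W^{u}(x)$, which has codimension $2$ in $W^{s/u}(x)$ (because $\Omega$ is assumed transverse to the (un)stable manifolds), dropping the expected dimension to $-2$ and so ruling it out generically. Your closing remark about achieving the needed transversality with $\nu_C$ supported away from the marked points via unique continuation is the right justification for the final sentence of the lemma, and is implicit in (and no harder than) what is already used in Lemma~\ref{th:smooth-1}.
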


This is standard (transversality of evaluation maps). The only wrinkle specific to our case is that the intersections avoid the marked points: but if they didn't, we would have an incidence constraint with $\Omega \cap W^s(x)$ or $\Omega \cap W^u(x)$, and those can be ruled out for dimension reasons.

\begin{proposition} \label{th:divisor-equations-proposition}
Fix some $A$. For suitable choices made in the definitions, the maps \eqref{eq:qs-map} and \eqref{eq:qp-map} are related by $P_{A,\Omega} = (A \cdot \Omega)\,S_A$. (For arbitrary choices, the same relation will therefore hold up to chain homotopy.)
\end{proposition}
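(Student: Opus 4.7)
The strategy is the classical divisor axiom, implemented via a forgetful-map argument. The key point is that on each regular fiber $\scrC_v$ of the family \eqref{eq:s2-family} there is a canonical identification $\scrC_v \cong C$ sending $z_{\scrC_v,k}$ to $z_{C,k}$ for $k=0,\dots,p,\infty$ and sending the extra marked point $z_{\scrC_v,*}$ to $v$ itself. So if one chooses the inhomogeneous terms compatibly, a solution on a regular fiber is the same data as a solution on $C$ together with the choice of a point of intersection with $\Omega$.

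\textbf{Compatible choice of perturbations.} First choose $\nu_C$ satisfying Assumption \ref{th:regular-2} and vanishing near the marked points $z_{C,k}$, such that the conclusion of Lemma \ref{th:smooth-1b} applies: every $u$ in a zero-dimensional $\scrM_A(C,x_0,\dots,x_\infty)$ meets $\Omega$ transversally, and at points disjoint from $\{z_{C,k}\}$. Now define $\nu_\scrC$ on $\scrC^{\mathit{reg}}$ to be $\nu_C$ on each regular fiber under the canonical identification, extended by zero across the nodes; this is well-defined because, as $v$ approaches a singular value, the component $\scrC_{v,-}$ collapses onto the relevant marked point, where $\nu_C$ already vanishes. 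Regularity in the sense of Assumption \ref{th:regular-2b}(i) for this $\nu_\scrC$ is automatic on the regular-$v$ locus: solutions there are pairs $(v,u)$ with $u$ solving \eqref{eq:cauchy-riemann} and $u(v)\in\Omega$, and Lemma \ref{th:smooth-1b} exactly says that the evaluation map $v\mapsto u(v)$ is transverse to $\Omega$. One then perturbs only in neighbourhoods of the singular fibers, preserving the identification on regular fibers, to arrange Assumption \ref{th:regular-2b}(ii)-(iv).

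\textbf{The forgetful map and fiber count.} With this choice, the map $\pi\colon(v,u)\mapsto u$ defines a forgetful map
\[
\pi\colon \scrM_A(\scrC,x_0,\dots,x_\infty,\Omega) \longrightarrow \scrM_A(C,x_0,\dots,x_\infty).
\]
In the zero-dimensional case, Lemma \ref{th:smooth-2} tells us that every point of the domain has $v$ regular, so $\pi$ is defined everywhere. For a given $u\in \scrM_A(C,x_0,\dots,x_\infty)$ (also zero-dimensional), the fiber $\pi^{-1}(u)$ is the set of $v\in C\setminus\{z_{C,0},\dots,z_{C,p},z_{C,\infty}\}$ with $u(v)\in\Omega$, which by Lemma \ref{th:smooth-1b} is a transverse finite intersection away from the marked points; by definition of the intersection pairing, the signed cardinality of this fiber equals $A\cdot\Omega$.

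\textbf{Signs and conclusion.} The only remaining issue is to check that the orientation on $\scrM_A(\scrC,x_0,\dots,x_\infty,\Omega)$ coming from the parametrized problem factors through $\pi$ as the product of the orientation on $\scrM_A(C,x_0,\dots,x_\infty)$ and the intersection-theoretic orientation on $u^{-1}(\Omega)$. This is the standard computation: the added complex-linear factor $T_v\scrC_v$ cancels against the complex orientation used to orient the intersection $u\pitchfork\Omega$. This is the only nontrivial step and is where I would spend the most care. Granting it, summing over $u$ gives
\[
m_A(\scrC,x_0,\dots,x_\infty,\Omega) = (A\cdot\Omega)\, m_A(C,x_0,\dots,x_\infty)
\]
for every tuple of critical points, so $P_{A,\Omega}=(A\cdot\Omega)\,S_A$ on the nose for this particular choice of auxiliary data; the general statement then follows from the chain-homotopy invariance already observed after \eqref{eq:qp-map}.
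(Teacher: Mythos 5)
Your argument follows essentially the same forgetful-map strategy as the paper's proof, and the geometric heart of it (canonical identification of regular fibres with $C$, fibre of the forgetful map is a signed count of $u^{-1}(\Omega)$ equal to $A\cdot\Omega$, orientation cancellation between $T_v\scrC_v$ and the transverse intersection with $\Omega$) is correct. The one step that, as written, does not quite work is the perturbation:
you propose to \emph{first} perturb $\nu_\scrC$ ``only in neighbourhoods of the singular fibers, preserving the identification on regular fibers'' so as to arrange Assumption \ref{th:regular-2b}(ii)--(iv), and \emph{then} invoke Lemma \ref{th:smooth-2} and the forgetful map for the resulting $\nu_\scrC$. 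But regular fibres accumulate onto the singular ones, so any perturbation supported near the singular fibres necessarily alters $\nu_\scrC$ on an open set of regular fibres as well; and moreover, parts (ii) and (iii) of Assumption \ref{th:regular-2b} concern regular parameter values $v$, so achieving them is not a purely ``local to the singular fibres'' matter. As a result, for your perturbed $\nu_\scrC$ the map $\pi\colon(v,u)\mapsto u$ does not land in $\scrM_A(C,\dots)$ for $v$ near the singular values, and the fibre-counting step is no longer justified on those pairs.

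The way to close this (which is what the paper does) is to reverse the logic: do not ask the pulled-back $\nu_\scrC$ to satisfy Assumption \ref{th:regular-2b} at all. Instead, show directly that for the unperturbed, pulled-back $\nu_\scrC$ one already has $\scrM_A(\scrC,\dots,\Omega)=\bar\scrM_A(\scrC,\dots,\Omega)$, with every element $(v,u)$ having $v$ a regular value bounded away from the singular locus, and with all points regular. This is exactly what the forgetful-map argument on the compactification delivers (its target, by Lemma \ref{th:smooth-1}(i), is the compact, regular, zero-dimensional $\scrM_A(C,\dots)$, and by Lemma \ref{th:smooth-1b} each preimage is a transverse finite set away from the marked points). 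Only once this compactness and regularity is in hand does one pass to a small perturbation of $\nu_\scrC$ that genuinely satisfies Assumption \ref{th:regular-2b}: regularity and compactness of the original space guarantee that such a perturbation induces a sign-preserving bijection on the zero-dimensional moduli spaces, so the signed counts agree and the ``official'' $P_{A,\Omega}$ equals $(A\cdot\Omega)\,S_A$. Your write-up implicitly needs this compactness-and-regularity observation to justify the perturbation, so you should make it explicit rather than trying to build a perturbation that agrees with the pulled-back $\nu_\scrC$ on all regular fibres.
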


\begin{proof}
Even more explicitly, our statement says that one can arrange that
\begin{equation} \label{eq:strict-divisor}
m_A(\scrC,x_0,\dots,x_p,x_\infty,\Omega) = (A \cdot \Omega)\, m_A(C,x_0,\dots,x_p,x_\infty).
\end{equation}
We start with $J$ as in Assumption \ref{th:regular-1b}, and a $\nu_C$ as in Lemma \ref{th:smooth-1b}. Because the inhomogeneous term is zero near the marked points, it can be pulled back to give a fibrewise inhomogeneous term $\nu_{\scrC}$. To clarify, if $\scrC_v$ is a singular fibre, then $\nu_{\scrC_v}$ is supported on $\scrC_{v,+} \iso C$, and zero on the other component $\scrC_{v,-}$. Let's consider the structure of the resulting moduli spaces. Given a point in the compactification $\bar\scrM_A(\scrC,x_0,\dots,x_p,x_\infty,\Omega)$, one can forget the position of the $\ast$ marked point, and then collapse unstable components (components which are not $C$, and which carry a constant $J$-holomorphic map and less than three special points). The outcome is a (continuous) map
\begin{equation} \label{eq:forget}
\bar\scrM_A(\scrC,x_0,\dots,x_p,x_\infty,\Omega) \longrightarrow \bar\scrM_A(C,x_0,\dots,x_p,x_\infty).
\end{equation}
Now suppose that the dimension is zero. Then, the target in \eqref{eq:forget} is $\scrM_A(C,x_0,\dots,x_p,x_\infty)$, and consists only of maps $u: C \rightarrow M$ whose intersection points with $\Omega$ are not marked points. The preimage of $u$ under \eqref{eq:forget} is necessarily an element of $\scrM_A(\scrC,x_0,\dots,x_p,x_\infty,\Omega)$, with $v$ a regular value; 
such preimages correspond bijectively to points in $u^{-1}(\Omega)$, hence form a finite set, and (because of the transversality condition in Lemma \ref{th:smooth-1b}) are regular points in the parametrized moduli space. Finally, the sign of their contribution to $m_A(\scrC,x_0,\dots,x_p,x_\infty,\Omega)$ is given by multiplying the contribution of $u$ to $m_A(C,x_0,\dots,x_p,x_\infty)$ with the local intersection number (sign) of $u$ and $\Omega$ at the relevant point.

We have now shown that $\scrM_A(\scrC,x_0,\dots,x_p,x_\infty,\Omega) = \bar\scrM_A(\scrC,x_0,\dots,x_p,x_\infty,\Omega)$ is regular, and that counting points in it exactly yields the right hand side of \eqref{eq:strict-divisor}. The $\nu_\scrC$ used for this purpose may not satisfy Assumption \ref{th:regular-2b}, so this setting is not strictly speaking part of our general definition of $m_A(\scrC,x_0,\dots,x_p,x_\infty,\Omega)$. However, we can find a small perturbation of $\nu_\scrC$ which does satisfy Assumption \ref{th:regular-2b}, and points in the associated zero-dimensional moduli spaces will correspond bijectively to those for the original $\nu_\scrC$, because of the compactness and regularity of the original space.
\end{proof}

\section{Quantum Steenrod operations}
This section concerns the operations \eqref{eq:sigma} and \eqref{eq:modified-sigma}. We first set up the various equivariant moduli spaces, then define $Q\Sigma_b$, and discuss its properties. Then we proceed to do the same for $Q\Pi_{a,b}$, and go as far as establishing \eqref{eq:pi-divisor}.

\subsection{\label{subsec:equivariant-moduli-space}}
%For clarity, we point out a difficulty of working with $\bF_p$-coefficients where $p$ is an odd prime. Unlike with $\bZ$ or $\bF_2$-coefficients, we cannot always define our $0$-dimension moduli spaces as the intersection of two pseudocycles, one being a Gromov-Witten-type pseudocycle. The reason for this is because the ``$\bF_p$-equivariant Gromov-Witten pseudodcycle" that we would define may potentially have boundaries that come in families of size $p$, related by the underlying $\bZ/p$-action (i.e. they vanish mod $p$ on the level of equivariant homology). To circumvent having to deal with the notion of a ``mod-$p$ pseudocycle", we will work directly with the $0$ and $1$-dimensional moduli spaces in question. The only special consideration for mod-$p$ occurs for moduli spaces of dimension $1$, and in this case one merely has to point out that in addition to the desired endpoints there may be families of endpoints of size $p$ (which will vanish when working over $\bF_p$-cohomology).
We equip $C = \bC P^1$ with the $(\bZ/p)$-action generated by the same rotation as in Section \ref{sec:subsec-basic-top-b}, but here denoted by $\sigma_C$. Fix a compatible almost complex structure $J$. An equivariant inhomogeneous term $\nu_C^{\mathit{eq}}$ is a smooth complex-antilinear map $TC \rightarrow TM$, where both bundles have been pulled back to $S^\infty \times_{\bZ/p} C \times M$, and with the same condition of vanishing near the marked points as before. More concretely, one can think of it as a family $\nu^{\mathit{eq}}_{C,w}$ of inhomogeneous terms (in the standard sense) parametrized by $w \in S^\infty$, with the property that
\begin{equation} \label{eq:nu-equivariance}
\nu^{\mathit{eq}}_{C,\tau(w),z,x} = \nu^{\mathit{eq}}_{C,w,\sigma_C(z),x} \circ D\sigma_z: \mathit{TC}_z \rightarrow \mathit{TM}_x
\;\; \text{ for $(w,z,x) \in S^\infty \times C \times M$.}
\end{equation}
Such equivariant data always exist, because the $\bZ/p$-action on the space $S^{\infty} \times C \times M$ is free. 
%**REF(10)**Added l.1035 about how freeness demonstrates we can choose such equivariant data 
Consider the following parametrized moduli problem:
\begin{equation} \label{eq:conditionsformodulispace}    
\begin{aligned}
& w \in S^\infty, \;\; u: C \longrightarrow M, \\ 
& (\bar\partial_J u)_z = \nu^{eq}_{C,w,z,u(z))}. \\
\end{aligned}
\end{equation}
Note that this inherits a $(\bZ/p)$-action, generated by
\begin{equation} \label{eq:flip-u}
(w,u) \longmapsto (\tau(w), u \circ \sigma_C).
\end{equation}
Fix critical points $x_0,\dots,x_p,x_\infty$, and impose the same incidence constraints as in \eqref{eq:incidence} or equivalently \eqref{eq:half-flow-lines}. Moreover, we fix an integer $i \geq 0$ and use that to restrict the parameter $w$ to one of the cells from \eqref{eq:d-delta-1}, \eqref{eq:d-delta-2}. More precisely, the condition is that
\begin{equation} \label{eq:cellrestriction}
w \in \Delta_i \setminus \partial \Delta_i \subset S^\infty.
\end{equation}
Take solutions of \eqref{eq:conditionsformodulispace}, \eqref{eq:incidence}, \eqref{eq:cellrestriction} that represent some class $A \in H_2(M;\bZ)$, and denote the resulting moduli space by $\scrM_A(\Delta_i \times C,x_0,\dots,x_p,x_\infty)$. The expected dimension increases by the number of parameters,
\begin{equation} \label{eq:equivariant-moduli-space-dimension}
\mathrm{dim}\, \scrM_A(\Delta_i \times C, x_0,\dots,x_p,x_\infty) = 
i + 2c_1(A) + |x_\infty| - |x_0| - \cdots - |x_p|.
\end{equation}
Note that while one could define such moduli spaces for more general cells $\tau^j(\Delta_i)$, that is redundant because of \eqref{eq:flip-u}. To express that more precisely, write $(x_1^{(j)},\dots,x_p^{(j)})$ for the $p$-tuple obtained by cyclically permuting $(x_1,\dots,x_p)$ $j$ times (to the right, so $x_1^{(1)} = x_p$). Then,
\begin{equation} \label{eq:shift-spaces}
\begin{aligned}
& \scrM_A(\tau^j(\Delta_i) \times C, x_0,\dots,x_p,x_\infty) \stackrel{\iso}{\longrightarrow}
\scrM_A(\Delta_i \times C, x_0, x_1^{(j)},\dots,x_p^{(j)},x_\infty), \\
& (w,u) \longmapsto (\tau^{-j}(w), u \circ \sigma_C^{-j}).
\end{aligned}
\end{equation}
There is also a natural compactification, denoted by $\bar\scrM_A(\cdots)$ as usual. This combines the (parametrized) stable map compactification, breaking of Morse flow lines, and instances where the parameter $w$ reaches the boundary of $\Delta_i$. 

\begin{lemma} \label{th:parametrized-space}
For generic $J$ and $\nu_C^{\mathit{eq}}$, the following properties are satisfied. 

(i) If the dimension \eqref{eq:equivariant-moduli-space-dimension} is zero, we get a finite set
\begin{equation}
\scrM_A(\Delta_i \times C, x_0,\dots,x_p,x_\infty) = \bar\scrM_A(\Delta_i \times C, x_0,\dots,x_p,x_\infty).
\end{equation}

(ii) If the dimension is $1$, the moduli space is regular, and its compactification is a manifold with boundary. Besides the usual boundary points arising from broken Morse flow lines, one has solutions $(w,u)$ where $w \in \partial \Delta_i$. Using \eqref{eq:shift-spaces}, the set of such boundary points can be identified with a disjoint union
\begin{equation}
\bigcup_j\,
\scrM_A(\Delta_{i-1} \times C, x_0,x_1^{(j)},\dots,x_p^{(j)}, x_\infty)
\;\; \text{over }
\begin{cases} 
j = 0,\dots,p-1 & \text{$i$ even}, \\
j = 0,1 & \text{$i$ odd.}
\end{cases}
\end{equation}
\end{lemma}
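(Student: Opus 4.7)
The plan is to adapt the compactness and regularity analysis of Lemma \ref{th:smooth-1} to the parametrized setting in which $w$ now ranges over the open cell $\Delta_i \setminus \partial\Delta_i$.

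First I would establish transversality. The equivariance condition \eqref{eq:nu-equivariance} ties values of $\nu_C^{\mathit{eq}}$ only across $\bZ/p$-orbits in $S^\infty$, and since the cells $\tau^j \Delta_i$ form a fundamental domain for that action, the restriction of $\nu_C^{\mathit{eq}}$ to $\Delta_i \setminus \partial\Delta_i$ can be prescribed freely. A standard universal moduli / Sard-Smale argument then gives regularity of $\scrM_A(\Delta_i \times C, x_0,\dots,x_p,x_\infty)$ for generic equivariant choices. In parallel with Assumption \ref{th:regular-2}, I would impose (generically satisfied) transversality requirements on the spaces of solutions with a simple $J$-holomorphic bubble attached at an arbitrary point of $C$, and on those with a simple holomorphic chain attached at one of the marked points (with incidence constraints transferred to the far end of the chain). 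This is where Assumption \ref{th:regular-1}(ii) enters, exactly as in the unparametrized case.

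Next I would address the stable-map compactification at fixed $w$ by the same codimension count as in Lemma \ref{th:smooth-1}: collapse ghost components, replace each non-trivial bubble by the underlying simple map, remove redundant components. The resulting configurations land in the moduli spaces covered by the transversality conditions just imposed, which have codimension $\geq 2$ relative to the main stratum. Hence no such degenerations appear in virtual dimensions $0$ or $1$, and the only boundary phenomena are the usual once-broken Morse flow lines and the parameter limit $w \in \partial\Delta_i$. The latter I would analyze using \eqref{eq:partial-delta-1}--\eqref{eq:partial-delta-2}: for even $i$, $\partial\Delta_i$ is the union of the $p$ translates $\tau^j \Delta_{i-1}$ ($j=0,\dots,p-1$); for odd $i$, only $\Delta_{i-1}$ and $\tau \Delta_{i-1}$ (so $j = 0, 1$) occur. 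Applying the identification \eqref{eq:shift-spaces} to each piece exhibits the boundary as the disjoint union of spaces $\scrM_A(\Delta_{i-1} \times C, x_0, x_1^{(j)},\dots,x_p^{(j)}, x_\infty)$ ranging over the $j$'s listed in (ii). In dimension $0$, neither broken flow lines nor $w \in \partial\Delta_i$ points are possible for dimensional reasons, so the moduli space coincides with its compactification, giving (i).

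The main technical obstacle is ensuring transversality at configurations where a bubble attaches at one of the $\sigma_C$-fixed marked points $z_{C,0}$ or $z_{C,\infty}$, or at parameter values $w$ lying close to $\partial \Delta_i$, where the equivariance condition on $\nu_C^{\mathit{eq}}$ is tightest. This is overcome by noting that the almost complex structure $J$ on $M$ may be varied without any equivariance constraint, so that the combined pair $(J,\nu_C^{\mathit{eq}})$ provides enough freedom for the universal linearised operator to be surjective at such configurations; alternatively, one can propagate free perturbations of $\nu_C^{\mathit{eq}}$ from a nearby regular (non-fixed) $w' \in S^\infty$ via \eqref{eq:nu-equivariance} itself.
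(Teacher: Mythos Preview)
Your approach is essentially the same as the paper's, which simply says the proof is a parametrized version of Lemma~\ref{th:smooth-1}: impose Assumption~\ref{th:regular-1} on $J$ and the parametrized analogue of Assumption~\ref{th:regular-2} on $\nu_C^{\mathit{eq}}$, with parameter space each $\Delta_i \setminus \partial\Delta_i$. Your outline is in fact more detailed than what the paper provides.

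One small remark on your final paragraph: the ``obstacle'' you identify is not a genuine one. The $\bZ/p$-action on $S^\infty$ is \emph{free}, so an equivariant inhomogeneous term is the same thing as a family of inhomogeneous terms parametrized by the smooth manifold $S^\infty/(\bZ/p)$; there are no special parameter values where the equivariance condition constrains the perturbation more tightly. (Near $\partial\Delta_i$ one must match with the inhomogeneous terms already chosen on lower cells, but that is the usual inductive set-up for parametrized transversality, not an equivariance issue.) Likewise, the marked points $z_{C,0}$ and $z_{C,\infty}$ are fixed by $\sigma_C$, but since the equivariance condition \eqref{eq:nu-equivariance} couples $(\tau(w),z)$ with $(w,\sigma_C(z))$ and $\tau$ has no fixed points on $S^\infty$, this imposes no constraint on $\nu_C^{\mathit{eq}}$ at a single $(w,z)$. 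So the workaround you propose (varying $J$ without equivariance constraint) is unnecessary, though harmless.
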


In (ii), note that the only points $w \in \partial \Delta_i$ that occur lie in the interior of the cells of dimension $(i-1)$. In particular, the fact that the even-dimensional $\Delta_i$ have corners can be disregarded. The proof of Lemma \ref{th:parametrized-space} is simply a parametrized version of that of Lemma \ref{th:smooth-1}: one imposes Assumption \ref{th:regular-1} on $J$, and the parametrized analogue of Assumption \ref{th:regular-2} on $\nu_C^{\mathit{eq}}$, where the parameter space is taken to be each $\Delta_i \setminus \partial \Delta_i$. We will not discuss the argument further, and move ahead to its implications.

As usual, we count points in zero-dimensional moduli spaces, and collect those coefficients into
\begin{equation} \label{eq:sigma-i-a}
\Sigma_A(\Delta_i,\dots): \mathit{CM}^*(f) \otimes \mathit{CM}^*(f)^{\otimes p} \longrightarrow \mathit{CM}^{*-i-2c_1(A)}(f).
\end{equation}
Lemma \ref{th:parametrized-space}(ii), with the orientations of the $\Delta_i$ taken into account as in \eqref{eq:partial-delta-1}, \eqref{eq:partial-delta-2}, shows that, $d$ being the Morse differential,
\begin{equation} \label{eq:parametrized-boundary}
\begin{aligned}
& 
d \Sigma_A(\Delta_i, x_0,\dots,x_p) - (-1)^i \sum_{j=0}^p (-1)^{|x_0|+\cdots+|x_{j-1}|} \Sigma_A(\Delta_i,x_0,\dots,dx_j,\dots,x_p) 
\\ & 
= \begin{cases} \displaystyle
\sum_j (-1)^* \Sigma_A(\Delta_{i-1},x_0,x_1^{(j)},\dots,x_p^{(j)}) & \text{$i$ even,} \\
(-1)^* \Sigma_A(\Delta_{i-1},x_0,x_1^{(1)},\dots,x_p^{(1)}) - \Sigma_A(\Delta_{i-1},x_0,x_1,\dots,x_p) & \text{$i$ odd.}
\end{cases}
\end{aligned}
\end{equation}
Here, $(-1)^*$ is the Koszul sign associated with permuting $(x_1,\dots,x_p)$. 

\begin{remark}
Our sign conventions for parametrized pseudo-holomorphic map equations are as follows. Consider, just for the simplicity of notation, operations induced by a Cauchy-Riemann equation on the sphere, with one input and one output. If we have a family of such equations depending on a parameter space $\Delta$ which is a manifold with boundary, then the resulting endomorphism of $\mathit{CM}^*(f)$ satisfies
\begin{equation}
d\phi_\Delta - (-1)^{|\Delta|} \phi_\Delta d = \phi_{\partial \Delta}.    
\end{equation}
%More generally, in the common situation where along $\partial \Delta = \Delta_1 \times \Delta_2$, the operation splits into a composition of two (the sphere breaking up into a nod, and the operation on each depending only on the parameters in one of the two factors), the equation reads
%\begin{equation}
%d\phi_\Delta - (-1)^{|\Delta|} \phi_\Delta d = \phi_{\Delta_1} \circ \phi_{\Delta_2}.
%\end{equation}
Note that this differs from the convention in \cite[Section 4c]{seidel-gaussmanin}; one can translate betwen the two by multiplying $\phi_\Delta$ with $(-1)^{|\Delta|(|\Delta|-1)}$. 
\end{remark}

From now on, we will exclusively work with coefficients in $\bF = \bF_p$.

\begin{lemma} \label{th:sigma-chain-map}
Suppose that $b$ is a Morse cocycle. Then, for each $i$ and $A$, 
\begin{equation} \label{eq:s-b}
x \longmapsto (-1)^{|b|\,|x|} \Sigma_A(\Delta_i,x,b,\dots,b)
\end{equation}
is a chain map (an endomorphism of the Morse complex) of degree $p|b|-i-2c_1(A)$.
\end{lemma}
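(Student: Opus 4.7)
The plan is to read off the claim directly from the boundary formula \eqref{eq:parametrized-boundary} after setting $x_0 = x$ and $x_1 = \cdots = x_p = b$. Because $b$ is a cocycle, $db = 0$, so every term in the sum $\sum_{j=0}^{p}$ on the left-hand side with $j \geq 1$ vanishes, and what remains is
\begin{equation*}
d\,\Sigma_A(\Delta_i, x, b,\ldots,b) - (-1)^i \Sigma_A(\Delta_i, dx, b,\ldots,b).
\end{equation*}
Thus, proving the chain-map identity reduces to showing that the right-hand side of \eqref{eq:parametrized-boundary} vanishes when $x_1 = \cdots = x_p = b$.

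The essential observation is that after cyclically permuting the inputs $(b,\ldots,b)$, one obtains exactly the same tuple $(b,\ldots,b)$, so the only thing the permutation contributes is a Koszul sign. A cyclic shift by one position consists of $p-1$ adjacent transpositions of equal graded elements, each contributing $(-1)^{|b|^2}$; hence a shift by $j$ carries the sign $(-1)^{j(p-1)|b|}$. Now I treat the two parities of $i$ separately. For $i$ even, the right-hand side becomes $\sum_{j=0}^{p-1} (-1)^{j(p-1)|b|} \Sigma_A(\Delta_{i-1}, x, b,\ldots,b)$: for $p$ odd, $(p-1)|b|$ is even, so every sign equals $+1$ and the sum is $p$ times a single term, which is zero in $\mathbb{F}_p$; for $p = 2$ the signs are $\pm 1$, which are both $+1$ in $\mathbb{F}_2$, so the sum is $2\Sigma = 0$. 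For $i$ odd, the right-hand side is the difference between the $j=1$ shift and the $j=0$ term, with relative Koszul sign $(-1)^{(p-1)|b|}$; this equals $+1$ for $p$ odd, and the two terms cancel, while for $p=2$ the two signs again agree in $\mathbb{F}_2$ and the difference is zero.

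Combining, we obtain
\begin{equation*}
d\,\Sigma_A(\Delta_i, x, b,\ldots,b) = (-1)^i \Sigma_A(\Delta_i, dx, b,\ldots,b)
\end{equation*}
in $\mathbb{F}_p$-coefficients. Multiplying both sides by $(-1)^{|b|\,|x|}$, bringing the sign inside as $(-1)^{|b|\,|x|} = (-1)^{|b|(|x|+1)} \cdot (-1)^{|b|}$, and comparing with the degree $k = p|b|-i-2c_1(A)$ of the claimed chain map, one checks that all signs line up (for $p$ odd this uses that $(p+1)|b|$ is even; for $p=2$ there is nothing to check). The degree count in \eqref{eq:equivariant-moduli-space-dimension} gives the stated shift. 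The only nontrivial input is the mod-$p$ cancellation in step three, which is essentially a discrete analogue of the fact that a cyclic average of $p$ equal terms vanishes in characteristic $p$.
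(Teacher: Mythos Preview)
Your argument is correct and follows exactly the approach the paper indicates: specialize \eqref{eq:parametrized-boundary} to $x_1=\cdots=x_p=b$, use $db=0$ to kill all but the $j=0$ term on the left, and observe that the right-hand side vanishes modulo $p$ (a sum of $p$ equal terms for $i$ even, a difference of two equal terms for $i$ odd). The paper records this as a one-line remark (``immediate, by specializing''), so you have simply spelled out the details; the only minor comment is that your final sign verification could be stated more cleanly as $d\phi=(-1)^{i+|b|}\phi\,d$ together with $i+|b|\equiv p|b|-i\pmod 2$ for $p$ odd.
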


This is immediate, by specializing \eqref{eq:parametrized-boundary} to $x_1 = \cdots = x_p = b$. In particular, in this case the Koszul signs in \eqref{eq:parametrized-boundary} are $1$: so for odd $i$, the expression on the right hand side vanishes, whereas for even $i$ that expression is $p \Sigma_A(\Delta_{i-1}, x_0,b,\dots,b)$, which vanishes modulo $p$.
%**REF(11)**Tried to be more explicit with why Lemma \ref{th:sigma-chain-map} is immediate.

We combine these operations into a series, which is a chain map
\begin{equation} \label{eq:multi-b}
\begin{aligned}
& \Sigma_{A,b}: \mathit{CM}^*(f) \longrightarrow (\mathit{CM}(f)[[t,\theta]])^{*+p|b|-2c_1(A)}, \\
& x \longmapsto (-1)^{|b|\,|x|} \sum_k \Big(\Sigma_A(\Delta_{2k},x,b,\dots,b) + (-1)^{|b|+|x|} \Sigma_A(\Delta_{2k+1},x,b,\dots,b) \theta\Big) t^k,
\end{aligned}
\end{equation}
One can also sum formally over all $A$ and extend the outcome $\Lambda$-linearly,
\begin{equation} \label{eq:lambda-linear}
\Sigma_b = \sum_A q^A \Sigma_{A,b}: \mathit{CM}^*(f;\Lambda) \longrightarrow \mathit{CM}^{*+p|b|}(f;\Lambda).
\end{equation}

%(the difference in degrees gives rise to the $(-1)^{|b|}$ in the second term).

\begin{lemma} \label{th:linearity}
Up to homotopy, \eqref{eq:multi-b} depends only on cohomology class $[b]$, and moreover, that dependence is linear.
\end{lemma}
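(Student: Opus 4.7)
The plan is to recognize $\Sigma_{A,b}$ as the image of $b^{\otimes p}$ under a single chain map out of the equivariant complex $(\mathit{CM}^*(f)^{\otimes p})_{\mathit{eq}}$, and then to reduce both halves of the lemma to Lemma~\ref{lemma:power-map-additive-t}.

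First, I would note that the count $\Sigma_A(\Delta_i,x,x_1,\dots,x_p)$ is already defined for arbitrary Morse critical points $x_i$, so by multilinear extension the formula \eqref{eq:multi-b} makes sense with $b^{\otimes p}$ replaced by an arbitrary $\vec y \in \mathit{CM}^*(f)^{\otimes p}$; extending $\bF_p[[t,\theta]]$-linearly, one obtains a map
$$
\tilde\Sigma_A : (\mathit{CM}^*(f)^{\otimes p})_{\mathit{eq}} \longrightarrow \mathit{End}\bigl(\mathit{CM}^*(f),\,\mathit{CM}^{*-2c_1(A)}(f)\bigr)[[t,\theta]],
$$
under the dictionary $\Delta_{2k}\leftrightarrow t^k$, $\Delta_{2k+1}\leftrightarrow t^k\theta$. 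A direct translation of \eqref{eq:parametrized-boundary} into the cohomological conventions of Section~2d shows that $\tilde\Sigma_A$ is a chain map, where the source carries the equivariant differential $d_{\mathit{eq}}$ and the target carries the commutator with the Morse differential. By construction, $\Sigma_{A,b} = \tilde\Sigma_A(b^{\otimes p})$.

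The first half of the lemma then follows from the proof of Lemma~\ref{lemma:power-map-additive-t}: given cohomologous cocycles $b_1,b_2 \in \mathit{CM}^*(f)$, the tensor powers $b_1^{\otimes p}$ and $b_2^{\otimes p}$ are cohomologous in $(\mathit{CM}^*(f)^{\otimes p})_{\mathit{eq}}$ (by reduction to the universal three-dimensional model); applying $\tilde\Sigma_A$ produces a chain homotopy between $\Sigma_{A,b_1}$ and $\Sigma_{A,b_2}$.

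For the second half, the same lemma provides an explicit $d_{\mathit{eq}}$-primitive for $t\bigl((b_1+b_2)^{\otimes p} - b_1^{\otimes p} - b_2^{\otimes p}\bigr)$, constructed by choosing one representative per free $\bZ/p$-orbit among the $2^p - 2$ mixed monomials and multiplying by $\theta$. Applying $\tilde\Sigma_A$ yields a chain homotopy $t\,\Sigma_{A,b_1+b_2} \htp t\,\Sigma_{A,b_1} + t\,\Sigma_{A,b_2}$. To conclude, I would invoke a ``division by $t$'' argument: the target is a module over $\bF_p[[t,\theta]]$ on which multiplication by $t$ is injective, and this injectivity passes to chain homotopy classes. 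Concretely, if a witness $h = \sum_k (h_k + h'_k\theta)t^k$ satisfies $t\phi = dh - (-1)^{|\phi|}hd$, then extracting $t$-coefficients order by order shows that $h_0, h'_0$ are chain maps and that $h' := \sum_k (h_{k+1} + h'_{k+1}\theta)t^k$ is a null-homotopy for $\phi$. Applied with $\phi = \Sigma_{A,b_1+b_2} - \Sigma_{A,b_1} - \Sigma_{A,b_2}$, this gives the desired additivity up to homotopy, and hence the full linearity statement.

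The main obstacle is essentially the bookkeeping in the first step: checking that with the Koszul signs in \eqref{eq:multi-b} and the case distinctions in \eqref{eq:parametrized-boundary}, the map $\tilde\Sigma_A$ really is a chain map with respect to the $d_{\mathit{eq}}$ formulas of Section~2d. Once that identification is in place, both halves of the lemma are formal consequences of Lemma~\ref{lemma:power-map-additive-t} combined with $t$-injectivity of the target.
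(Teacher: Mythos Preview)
Your strategy is exactly the paper's: build a chain map out of $(\mathit{CM}^*(f)^{\otimes p})_{\mathit{eq}}$, feed in $b^{\otimes p}$, and invoke Lemma~\ref{lemma:power-map-additive-t} for both claims (with a division-by-$t$ step for additivity). Your $t$-injectivity argument is in fact more explicit than the paper's one-line ``so it follows''.

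There is, however, a real error hiding in what you call bookkeeping. The map $\tilde\Sigma_A$ is \emph{not} obtained by ``extending $\bF_p[[t,\theta]]$-linearly'' from the formula \eqref{eq:multi-b}. That na\"ive extension fails to be a chain map for $p>2$: on $\vec y\,t^j$, the source differential $d_{\mathit{eq}}$ produces a term $(-1)^{|\vec y|}(\sigma\vec y - \vec y)t^j\theta$, whereas the target differential on $\phi(\vec y)t^j$ does not; and the even-$i$ case of \eqref{eq:parametrized-boundary} produces the full orbit sum $\sum_j \Sigma_A(\Delta_{2k-1},x,\sigma^j\vec y)$, which the $\theta$-linear extension cannot account for (since $\theta^2=0$ kills the would-be matching term). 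The correct extension, written out as \eqref{eq:equivariant-complex-map} in the paper, is only $t$-linear; on the $\theta$-part it inserts the twisted sum $\sum_j j(-1)^*\Sigma_A(\Delta_{2k+1},x,\sigma^j\vec y)\,t$, which is precisely the operator $\tilde\theta$ of \eqref{eq:endomorphism}--\eqref{eq:endomorphism-2} rather than plain multiplication by $\theta$. Once you use that formula, the chain-map verification is indeed a direct translation of \eqref{eq:parametrized-boundary}, and the rest of your argument goes through unchanged.
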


\begin{proof}
Take $\mathit{CM}^*(f)^{\otimes p}$, with the $\bZ/p$-action given by cyclic permutation, and form the associated equivariant complex as in \eqref{eq:general-equivariant}. Consider the $t$-linear map
\begin{equation} \label{eq:equivariant-complex-map}
\begin{aligned}
& \Sigma_A^{\mathit{eq}}: \mathit{CM}^*(f) \otimes (\mathit{CM}^*(f)^{\otimes p})_{\mathit{eq}} \longrightarrow (\mathit{CM}(f)[[t,\theta]])^{*-2c_1(A)}, 
\\
& x_0 \otimes (x_1 \otimes \cdots \otimes x_p) \longmapsto
\\
& \qquad \sum_k \Big( \Sigma_A(\Delta_{2k},x_0,\dots,x_p) + (-1)^{|x_0|+\cdots+|x_p|} \Sigma_A(\Delta_{2k+1},x_0,\dots,x_p) \theta \Big) t^k,
\\
& x_0 \otimes (x_1 \otimes \cdots \otimes x_p)\, \theta \longmapsto 
 \sum_k \Big( \Sigma_A(\Delta_{2k},x_0,\dots,x_p) \theta \\
& \qquad \qquad \qquad \qquad -
(-1)^{|x_0|+\cdots+|x_p|} \sum_j 
j (-1)^* \Sigma_A(\Delta_{2k+1},x_0,x_1^{(j)},\dots,x_p^{(j)}) t \Big)t^k,
\end{aligned}
\end{equation}
where $(-1)^*$ is again the Koszul sign. The equation \eqref{eq:parametrized-boundary}, along with \eqref{eq:partial-delta-1} and \eqref{eq:partial-delta-2}, amounts to saying that \eqref{eq:equivariant-complex-map} is a chain map with respect to $d_{eq}$. As an elementary algebraic consequence, one has the following: if $c$ is any cocycle in $(\mathit{CM}^*(f)^{\otimes p})_{\mathit{eq}}$, then 
\begin{equation}
x \longmapsto (-1)^{|c|\,|x|} \Sigma_A^{\mathit{eq}}(x \otimes c)
\end{equation}
is an endomorphism of the chain complex $\mathit{CF}^*(f)$ of degree $|c|-2c_1(A)$. The homotopy class of that endomorphism depends only on the cohomology class of $c$. Moreover, they are additive in $c$. Applying that construction to $c = b \otimes \cdots \otimes b$ yields precisely \eqref{eq:multi-b}.

From Lemma \ref{lemma:power-map-additive-t}, we know that the cohomology class $[b \otimes \cdots \otimes b] \in H^*_{\mathit{eq}}(\mathit{CM}^*(f)^{\otimes p})$ only depends on that of $[b]$, which proves our first claim. By the same Lemma, if we use $t(b \otimes \cdots \otimes b)$ instead, the associated operation becomes linear in $[b]$. But that operation is just $t$ times \eqref{eq:multi-b}, so it follows that \eqref{eq:multi-b} itself must be linear in $[b]$.
\end{proof}

\begin{definition} \label{th:q-sigma}
For $b \in H^*(M;\bF_p)$ and $A \in H_2(M;\bZ)$, we define the operation $Q\Sigma_{A,b}$ from \eqref{eq:q-sigma-a} to be the cohomology level map induced by \eqref{eq:multi-b}. Correspondingly, \eqref{eq:lambda-linear} is the chain map underlying $Q\Sigma_b$.
\end{definition}

Here, we are implicitly using the fact that the chain level operations are independent of all choices up to chain homotopy. The proof is standard, using moduli spaces with one extra parameter, and will be omitted. Among the previously stated properties of $Q\Sigma$, \eqref{eq:non-equivariant} concerns the contribution of the cell $\Delta_0$, which is the operation from Section \ref{subsec:moduli-spaces-of-holomorphic-curves}, hence is exactly Lemma \ref{th:p-plus-one-fold}. The next two Lemmas correspond to \eqref{eq:classical-xi} and \eqref{eq:steenrod-sigma}.

\begin{lemma} \label{th:classical-sq}
For $A = 0$, $Q\Sigma_{A,b}$ is the cup product with $\mathit{St}(b)$.
\end{lemma}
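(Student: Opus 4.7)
The strategy is to reduce the $A=0$ computation to a purely topological/combinatorial one, and then identify the resulting equivariant cochain-level construction with the standard definition of Steenrod operations.

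First, by the independence of choices (and the reasoning of Remark \ref{th:negative-energy}), I would choose the equivariant inhomogeneous term $\nu_C^{\mathit{eq}}$ to be supported in a small neighbourhood of a single non-marked ``auxiliary'' point of $C$ (say at $z = 1$), so that the restriction of any $A = 0$ solution to a neighbourhood of each marked point is essentially constant. Since $A = 0$, energy considerations combined with the weak monotonicity assumption force solutions to remain close to constant maps; all non-constant behaviour is concentrated in the support region of $\nu_C^{\mathit{eq}}$. The resulting $A=0$ moduli space $\scrM_0(\Delta_i \times C, x_0, \ldots, x_p, x_\infty)$ then fibres (up to a regular perturbation) over $\Delta_i$, with fibre given by points in the intersection $W^u(x_0) \cap \cdots \cap W^u(x_p) \cap W^s(x_\infty)$ combined with an evaluation-map cut down from the parametrized perturbation data. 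The count is therefore determined entirely by topological data on $\Delta_i \times M$.

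Next, I would identify the chain-level operation $\Sigma_0^{\mathit{eq}}$ from the proof of Lemma \ref{th:linearity}, restricted to $A=0$, with a Morse-theoretic model of the classical cochain-level construction of Steenrod operations. The cells $\Delta_i \subset S^\infty$ together with the $(\bZ/p)$-action by cyclic rotation are precisely the standard model $W_* \to \bF_p$ for a free resolution of $\bF_p$ over $\bF_p[\bZ/p]$ that underlies the definition of $\mathit{St}$. The resulting bilinear map
\[
\mathit{CM}^*(f) \otimes (\mathit{CM}^*(f)^{\otimes p})_{\mathit{eq}} \longrightarrow \mathit{CM}^*(f)
\]
is, up to the chosen sign conventions \eqref{eq:trivial-steenrod}, the Morse-theoretic realization of the classical $\bZ/p$-equivariant diagonal approximation, that is, a cochain-level Steenrod construction.

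Once this identification is made, specializing to $c = b \otimes \cdots \otimes b$ in the argument of Lemma \ref{th:linearity} gives the classical definition $x \mapsto \mathit{St}(b) \cup x$, by invoking Lemma \ref{lemma:power-map-additive-t} together with the standard fact that the map $H^*(M;\bF_p) \to H^{p*}_{\mathit{eq}}((C^*(M))^{\otimes p})$ induced by $b \mapsto b^{\otimes p}$, followed by the equivariant diagonal, yields $\mathit{St}(b)$ up to the conventions recorded in \eqref{eq:trivial-steenrod}--\eqref{eq:trivial-steenrod-2}. Passing to the cohomology level via Definition \ref{th:q-sigma} gives the desired identification of $Q\Sigma_{0,b}$ with cup product by $\mathit{St}(b)$.

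The main technical obstacle is the first step: arranging a perturbation scheme that both achieves the regularity of Lemma \ref{th:parametrized-space} across all $i$ and is small enough that $A = 0$ solutions admit the topological description above. One could alternatively bypass this by a deformation argument: note that for $A = 0$ the operation is independent of $J$ and of the Hamiltonian part of $\nu_C^{\mathit{eq}}$, then pick a fully integrable model (e.g.\ on a product $M = N \times N$ with $b$ pulled back from a factor, or a Morse--Bott degeneration to a point) in which $\mathit{St}(b) \cup x$ can be read off directly. The bookkeeping of the Koszul and orientation signs from \eqref{eq:partial-delta-1}, \eqref{eq:partial-delta-2}, \eqref{eq:parametrized-boundary} must be checked to match the conventions in \eqref{eq:trivial-steenrod}, but this is purely mechanical once the topological identification is in place.
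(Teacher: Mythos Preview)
Your overall strategy---reduce the $A=0$ case to a purely Morse-theoretic computation, then identify that with the classical Steenrod construction---is the right one, and matches the paper's intent. But your implementation of the first step has exactly the gap you flag, and the paper resolves it by a different (and cleaner) perturbation scheme that you should know about.

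Rather than trying to make $\nu_C^{\mathit{eq}}$ small or localized and then arguing that $A=0$ solutions are ``close to constant,'' the paper shifts the transversality burden entirely onto the Morse side: it replaces the gradient flow in the half-flow-line conditions \eqref{eq:half-flow-lines} by $s$- and $w$-dependent vector fields $Z_{k,w,s}$ (agreeing with $\nabla f$ for $|s|\gg 0$, and satisfying an equivariance condition analogous to \eqref{eq:nu-equivariance}). Genericity of these $Z_k$ suffices for regularity, so one may then take $\nu_C^{\mathit{eq}} \equiv 0$. For $A=0$ every solution $u$ is then \emph{exactly} a constant map, and the moduli space becomes a purely Morse-theoretic object: tuples of perturbed half-flow-lines with a common endpoint, parametrized by $w \in \Delta_i$. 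This sidesteps the tension you identify between smallness of $\nu_C^{\mathit{eq}}$ and regularity across all $i$.

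For the second step, the paper is also more concrete than your abstract identification via free resolutions. Having reduced to a Morse-theoretic picture, it inserts a finite-length flow line separating the $x_0$ half-line from the $(x_1,\dots,x_p,x_\infty)$ cluster, with the vector fields on the $x_0$ side chosen independent of $w$. Taking the length to infinity splits the configuration into two pieces: a Morse-theoretic cup product (the $w$-independent part) composed with the Morse-theoretic Steenrod operation in the sense of Betz--Cohen and Cohen--Norbury. Your algebraic route via the equivariant diagonal approximation would also work, but you would still need to exhibit the Morse moduli space as a model for that approximation, which is essentially the content of those references.
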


\begin{proof}[Sketch of proof]%**REF(13)**I might say something more about making iterative choices of perturbations so that these unstable manifolds intersect transversely in Wilk18 Appendix B.1
It will be convenient for this purpose to allow a slightly larger set of choices in the construction. Namely, we choose $s$-dependent vector fields for $s$ in the relevant half-line given below, parametrising either an ``incoming" or ``outgoing" flowline respectively %**REF(13)**Added a pre-iteration of what s is 
\begin{equation} \label{eq:z-fields}
\begin{aligned}
& Z_{0,w,s},\dots,Z_{p,w,s} \in \smooth(TM) \text{ for $w \in S^\infty$, $s \leq 0$, with } \;\; Z_{k,w,s} = \nabla f \text{ if $s \ll 0$,}
\\
& Z_{\infty,w,s} \in \smooth(TM) \text{ for $w \in S^\infty$, $s \geq 0$, with } \;\; Z_{\infty,w,s} = \nabla f \text{ if $s \gg 0$.}
\end{aligned}
\end{equation} %**REF(13)**added clause below putting perturbations in context
These are used to replace the gradient flow equations in \eqref{eq:half-flow-lines} by $dy_k/ds = Z_{w,k,s}$. The effect is that in the incidence conditions \eqref{eq:incidence}, the (un)stable manifolds are replaced by perturbed versions. In particular, the transversality of those incidence conditions imposed on pseudo-holomorphic curves can then be achieved by choosing \eqref{eq:z-fields} generically. This strategy (with minor technical differences) goes back to the Morse-theoretic definition of ordinary Steenrod operations in \cite[Section 2]{betz-cohen95}. In \cite[Appendix B.1]{wilkins18}, the iterative procedure to choose such $Z_{k,w,s}$ in a way that one obtains a moduli space cut-out transversely is given in detail, in addition to the fact that such a choice is generic. %(This strategy goes back to \cite[Section 2]{betz-cohen95}. There, the vector fields were assumed to be gradient vector fields of small perturbations of $f$, and $s$-independent, which requires one to identify the Morse complexes of the perturbed functions with that of $f$; a step which is unnecessary here because $Z_{w,k,s} = \nabla f$ for large $s$.)
%
%he condition that the relevant (un)stable manifolds intersect transversely for each $w$ is sufficient but not necessary for transversality.

We impose an additional symmetry condition, which ensures that \eqref{eq:shift-spaces} still holds:
\begin{equation} \label{eq:z-symmetry}
Z_{k+1,w,s} = Z_{k,\tau(w),s} \;\; \text{ for $k = 1,\dots,p-1$}
\end{equation}
Considering just $A = 0$, this means that we can take the inhomogeneous term to be zero throughout, so that all maps $u$ are constant (and of course regular). The resulting moduli spaces are purely Morse-theoretical, see Figure \ref{fig:morse}(i) for a schematic representation. Without violating the symmetry property \eqref{eq:shift-spaces}, we can deform our moduli spaces as indicated in Figure \ref{fig:morse}(ii). This separates the coincidence condition at the endpoints of the half-flow lines into two parts, joined by a finite length flow line of some other auxiliary $s$-dependent vector field.  More precisely, we use the length as an additional parameter, and all vector fields involved may depend on that. One can arrange that as the length goes to $\infty$, the limit consists of split solutions as in Figure \ref{fig:morse}(iii), where the vector fields on the bottom part are independent on $w \in S^\infty$. It is now straightforward to see that this limit is the combination of the Morse-theoretic cup product and the Morse-theoretic version of the Steenrod operation \cite{betz-cohen95, cohen-norbury12}.
\end{proof}
\begin{figure}
\includegraphics{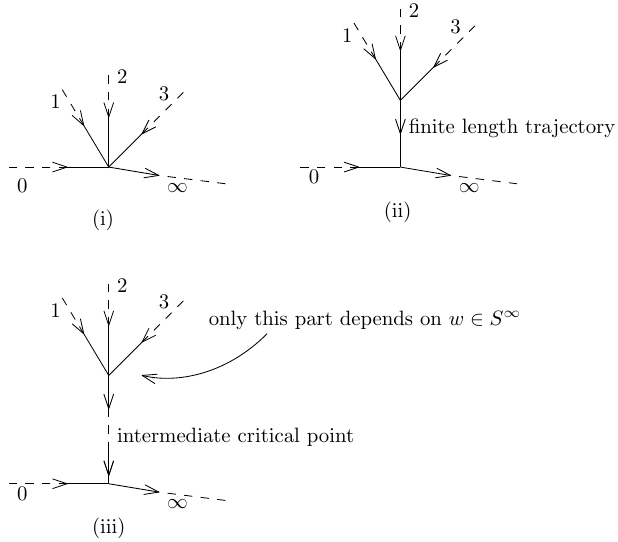}
\begin{centering}
\caption{\label{fig:morse}A schematic picture of the proof of Lemma \ref{th:classical-sq}.}
\end{centering}
\end{figure}

\begin{lemma}
$Q\Sigma_{A,b}(1)$ agrees with the $A$-contribution to the quantum Steenrod operation $Q\mathit{St}(b)$, as defined in \cite{fukaya93b, wilkins18} (for $p = 2$) or \cite{seidel19} (all $p$).
\end{lemma}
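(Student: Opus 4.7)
The plan is to identify our moduli space with the one used in the standard definition of $Q\mathit{St}$ via the observation that inserting the unit class at $z_{C,0}$ amounts to imposing no constraint there, so that the marked point $z_{C,0}$ can be forgotten without affecting the count.

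First I would recall the standard setup from \cite{seidel19} (and its $p=2$ analogues): one takes a sphere with $p+1$ marked points, consisting of an output marked point $\infty$ together with $p$ equally spaced inputs on the unit circle, and one considers $(\bZ/p)$-equivariant pseudo-holomorphic maps from this sphere with $b$ inserted at each of the $p$ input points and an output cohomology class taken out at $\infty$. Both the rotation on the domain and the parameter space $S^\infty$ enter symmetrically, exactly matching our cell decomposition from Section \ref{sec:basicparameterspaces}. Thus it suffices to compare our moduli spaces $\scrM_A(\Delta_i \times C,x_0,b,\dots,b,x_\infty)$ with the corresponding equivariant moduli spaces in the standard definition, in the case where $x_0$ represents the unit $1 \in H^0(M;\bF_p)$.

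Next I would represent $1$ by a suitable Morse cocycle $e \in \mathit{CM}^0(f)$; since $z_{C,0} = 0$ is a fixed point of the rotation $\sigma_C$, the constraint $u(z_{C,0}) \in W^u(e)$ is $\bZ/p$-invariant and does not interfere with the equivariance of the moduli problem. By a now-standard argument (essentially the one that establishes the unit axiom for Gromov-Witten invariants; cf.\ the proof of \eqref{eq:steenrod-zero} via \eqref{eq:compose-sigma}), one can choose the perturbation data $\nu_C^{\mathit{eq}}$ and the auxiliary vector fields as in Lemma \ref{th:classical-sq} so that the map which forgets the marked point $z_{C,0}$ gives a bijection between our moduli space and the standard one, with matching signs. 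Concretely: either one chooses $e$ to be a minimum whose unstable manifold is all of $M$ and arranges that forgetting $z_{C,0}$ does not destabilize any components, or one proves it via a homotopy of the $z_{C,0}$-incidence constraint to the trivial one, using a parameter moduli space as in Definition \ref{th:q-sigma}.

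The main obstacle is the usual technical one: reconciling our specific choice of equivariant inhomogeneous terms (which lives on the full $(p+2)$-marked domain) with those used in the source reference (which lives on a $(p+1)$-marked domain), and tracking orientations/Koszul signs through the forgetting map. This is handled by the by-now-routine parametrized moduli space argument that interpolates between any two admissible choices; all such comparison maps yield chain homotopic operations, so the cohomology level statement follows. No new geometric input is needed beyond the regularity results of Section \ref{subsec:moduli-spaces-of-holomorphic-curves} in their parametrized equivariant form.
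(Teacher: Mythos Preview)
Your proposal is correct and follows essentially the same approach as the paper: represent $1$ by the sum of local minima, observe that the incidence condition $u(z_{C,0}) \in W^u(x_0)$ is then generically vacuous on zero-dimensional moduli spaces, forget the marked point $z_{C,0}$, and identify the result with the standard definition of $Q\mathit{St}(b)$. The paper's sketch is terser and does not invoke the auxiliary vector fields of Lemma \ref{th:classical-sq} or a separate homotopy argument---it simply notes that the unstable manifolds of the minima form an open dense set, so the constraint is automatically satisfied for generic data---but the substance is the same.
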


\begin{proof}[Sketch of proof]
Morse-theoretically, $1$ is represented by the sum of local minima of the Morse function. Hence, the associated incidence condition \eqref{eq:incidence} requires $u(0)$ to lie in an open dense set, and is generically satisfied on every zero-dimensional moduli space. In other words, $\Sigma_{A,b}(1)$ can be computed by forgetting the zero-th marked point and its incidence condition. The outcome is exactly the definition of the quantum Steenrod operation, generalizing the $p = 2$ case from \cite{wilkins18} in a straightforward way; compared to the slightly more abstract formulation in \cite[Section 9]{seidel19}, the only difference is that we stick to a specific cell decomposition of $B\bZ/p = S^\infty/(\bZ/p)$.
\end{proof}

\subsection{}
The final piece of our discussion of $Q\Sigma$ operations concerns \eqref{eq:compose-sigma}. We assume that the underlying cochain level map $\Sigma_b$ has been extended to $b \in \mathit{CM}^*(f) \otimes \Lambda$, as in \eqref{eq:b-variable-extension}.

\begin{proposition} \label{th:sigma-composition}
Fix Morse cocycles $b$ and $\tilde{b}$, and write $\tilde{b} \ast b \in \mathit{CM}^*(f) \otimes \Lambda$ for a cochain representative of their quantum product. Then, there is a chain homotopy
\begin{equation}
\Sigma_{\tilde{b}} \circ \Sigma_b \htp (-1)^{|b|\,|\tilde{b}| \frac{p(p-1)}{2}} \Sigma_{\tilde{b}\ast b}.
\end{equation}
\end{proposition}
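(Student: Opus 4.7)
The plan is to realize both sides as the two limiting strata of a single $(\bZ/p)$-equivariant one-parameter family of moduli spaces whose generic fibre has as its domain a sphere with $2p+2$ marked points,
\[
z_0,\; z_{1,1}, z_{1,2},\; \ldots,\; z_{p,1}, z_{p,2},\; z_\infty,
\]
equipped with a $(\bZ/p)$-action cyclically permuting the $p$ pairs $(z_{k,1},z_{k,2})$. The cohomology insertion at each $z_{k,1}$ will be $b$ and at each $z_{k,2}$ will be $\tilde b$. A single smoothing parameter $r \in [0,1]$ controls the degeneration: at $r=0$, each pair $(z_{k,1},z_{k,2})$ pinches off into a small three-pointed bubble attached to a stable $(p+2)$-pointed main sphere (each such bubble contributing, after a further standard neck-stretching as in Lemma \ref{th:p-plus-one-fold}, a copy of the quantum product $\tilde b \ast b$); at $r=1$, the domain degenerates to two spheres joined at a node, one carrying $z_0$ and the $z_{k,1}$'s, the other carrying $z_\infty$ and the $z_{k,2}$'s.

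Over this family I would set up the parametrized equivariant Cauchy--Riemann problem in complete parallel with Section \ref{subsec:equivariant-moduli-space}, with parameters $(w,r) \in \Delta_i \times [0,1]$ and the usual incidence conditions. Granted the analogues of Assumptions \ref{th:regular-1}, \ref{th:regular-2b} (the main new ingredient being transversality for bubble trees attached at the pair-collision points and at the separating node, achievable by generic choices), the boundary of the one-dimensional moduli space decomposes into: Morse flow line breakings (yielding the chain-homotopy equation); cell-boundary terms in $\partial \Delta_i$ (which cancel out when everything is packaged into the equivariant complex as in Lemma \ref{th:linearity}); the $r=0$ stratum (identified with $\Sigma_{\tilde b \ast b}$ via the bubble-off degeneration above); and the $r=1$ stratum (identified with $\Sigma_{\tilde b} \circ \Sigma_b$ via a Morse flow line inserted at the separating node).

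Just as in Lemma \ref{th:linearity}, it is cleanest to formulate the operation as a chain map out of the equivariant complex on $(\mathit{CM}^*(f))^{\otimes 2p}$, where $\bZ/p$ acts by cyclic permutation of pairs. Evaluation on the class $[(b \otimes \tilde b)^{\otimes p}]$ recovers the $r=0$ side, while the $r=1$ side corresponds to evaluation on $[b^{\otimes p} \otimes \tilde b^{\otimes p}]$ (after collecting the $b$'s and $\tilde b$'s separately on the two components). By Lemma \ref{lemma:power-map-additive-t}, the dependence on $b,\tilde b$ factors through their cohomology classes, and the quantum-product representative $\tilde b \ast b$ appears in the expected way through the small bubbles.

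The main obstacle is the sign $(-1)^{|b|\,|\tilde b|\, p(p-1)/2}$. It originates from the permutation reordering $(b \otimes \tilde b)^{\otimes p}$ into $b^{\otimes p} \otimes \tilde b^{\otimes p}$, which is exactly $p(p-1)/2$ transpositions each contributing a Koszul sign $(-1)^{|b|\,|\tilde b|}$. Separating this sign cleanly from those produced by the orientation conventions on $\Delta_i$ and from the signs in the equivariant differential requires careful bookkeeping, and this is where I expect most of the work to lie.
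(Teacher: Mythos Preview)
Your one-parameter family and its two degenerations are the same as the paper's ($r \in [0,1]$ corresponds to the paper's $\eta \in (1,\infty)$), and your identification of the $r=0$ limit with $\Sigma_{\tilde b \ast b}$ is correct. But there is a genuine gap at the other end.

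At $r=1$ the domain splits into two components, each a copy of the standard $(p+2)$-pointed $C$. However, both components still carry the \emph{same} equivariant parameter $w \in \Delta_i \subset S^\infty$: the degeneration happens in the fibre direction, not in the $S^\infty$ direction. Inserting a finite-length Morse flow line at the separating node and stretching it does decouple the two maps, but it does not decouple their $w$-parameters. What you obtain is a fibre product
\[
\scrM_{A_1}(\Delta_i \times C, x_0,x_1,\dots,x_p,x) \times_{S^\infty} \scrM_{A_2}(\Delta_i \times C, x, \tilde x_1,\dots,\tilde x_p, x_\infty),
\]
i.e.\ an operation parametrized by the diagonal $\delta(\Delta_i) \subset (S^\infty/(\bZ/p))^2$. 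By contrast, the composition $\Sigma_{\tilde b}\circ\Sigma_b$, once you expand the $(t,\theta)$-series, involves \emph{independent} parameters $w_1 \in \Delta_{i_1}$, $w_2 \in \Delta_{i_2}$, i.e.\ product cycles $\Delta_{i_1} \times \Delta_{i_2}$. These are not the same thing, and your proposal does not contain a step relating them.

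The paper closes this gap with an additional homotopy, using the K\"unneth decomposition of the diagonal class $\delta_*\Delta_i$ from Section~\ref{subsec:kunneth} (equation~\eqref{eq:kunneth-of-diagonal}). One first extends the construction to operations $\Xi_{\tilde b,b}(Z,\cdot)$ for arbitrary cycles $Z$ in $(S^\infty/(\bZ/p))^2$, so that $Z=\delta(\Delta_i)$ gives the $r=1$ limit and $Z=\Delta_{i_1}\times\Delta_{i_2}$ gives (up to the Koszul sign you identified) the composition; then one uses that homologous cycles give homotopic maps. This diagonal-versus-product step is the essential missing idea in your outline.
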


\begin{proof}[Sketch of proof]
We introduce a family of Riemann surfaces with $(2p+2)$ marked points, which depends on an additional parameter $\eta \in (1,\infty)$. Each of those surfaces $C_\eta$ is a copy of $C$, and the marked points are $z_{C_\eta,k} = z_{C,k}$, $k \in \{0,\dots,p,\infty\}$, from \eqref{eq:c-curve} together with
\begin{equation} \label{eq:c-curve-2}
\tilde{z}_{C_\eta,1} = \eta z_{C,1},\; \dots,\; \tilde{z}_{C_\eta,p} = \eta z_{C,p}.
\end{equation}
There are natural degenerations at the end of our parameter space: as $\eta \rightarrow 1$,
each point $\tilde{z}_{C_\eta,k}$ collides with its counterpart $z_{C_\eta,k}$, and one can see this as each pair bubbling off into an extra component of a nodal curve $C_1$. As $\eta \rightarrow \infty$, all the $\tilde{z}_{C_\eta,k}$ collide with $z_{C_\eta,\infty}$, and one can see as degeneration of $C_\eta$ into a nodal curve $C_\infty$ with two components, each of which is modelled on the original \eqref{eq:c-curve} (see Figure \ref{fig:2}).
\begin{figure}
\begin{centering}
\includegraphics{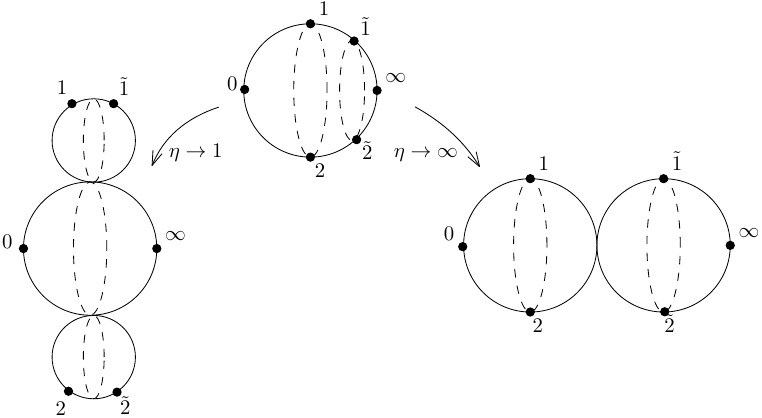}
\caption{\label{fig:2}The family underlying the proof of Proposition \ref{th:sigma-composition}, for $p = 2$.}
\end{centering}
\end{figure}%

We choose an equivariant inhomogeneous term $\nu^{\mathit{eq}}_{C_\eta}$ on each of our curves, which is well-behaved under the two degenerations (and is zero in a neighbourhood of the nodes and marked points; the details are similar to our previous definition of fibrewise inhomogeneous terms). Given critical points $x_0,x_1,\tilde{x}_1,\dots,x_p,\tilde{x}_p,x_\infty$ of the Morse function $f$, and a cell $\Delta_i$, we define a moduli space of triples $(\eta,w,u)$, where: $\eta \in (1,\infty)$, $w$ is as in \eqref{eq:cellrestriction}, and $u: C_\eta \rightarrow M$ is a map, representing the given homology class $A$, which satisfies the $\eta$-parametrized version of \eqref{eq:conditionsformodulispace}, and the incidence conditions \eqref{eq:incidence} as well as
\begin{equation} \label{eq:added-incidence-2}
u(\tilde{z}_{C_\eta,1}) \in W^u(\tilde{x}_1),\; \dots,\; u(\tilde{z}_{C_\eta,p}) \in W^u(\tilde{x}_p).
\end{equation}
To understand the algebraic relations which this parametrized moduli space provides, we have to look at the contributions from limits with $\eta = 1$ or $\eta = \infty$. The $\eta = 1$ contribution is given by a suitable moduli space of maps on $C_1$, and is fairly easy to interpret. Namely, one follows the proof of Lemma \ref{th:p-plus-one-fold} and separates the components of $C_1$ by finite length gradient trajectories (to preserve the $\bZ/p$-symmetry, all the lengths must be the same, so there is only one length parameter). As the length goes to infinity, the Morse flow lines split, and we end up with a composition of quantum product (of $x_k$ and $x_k'$) and a remaining component where we have the previously defined operation \eqref{eq:sigma-i-a}. We can apply the same strategy to the $\eta = \infty$ limit, inserting a finite length gradient flow line between the two pieces. As the length goes to infinity, we end up with two separate components carrying equations of the kind which underlies \eqref{eq:sigma-i-a}. However, the two equations are coupled because they carry the same parameter $w \in S^\infty$. In other words, the resulting moduli spaces end up being
\begin{equation} \label{eq:prod-over}
\bigcup\; \scrM_{A_1}(\Delta_i \times C,x_0,\dots,x_p,x) \times_{S^\infty}
\scrM_{A_2}(\Delta_i \times C,x,\tilde{x}_1,\dots,\tilde{x}_p,x_\infty),
\end{equation}
where the (disjoint) union is over $A_1 + A_2 = A$ and all critical points $x$. 

In the same spirit as in \eqref{eq:sigma-i-a}, we denote the operations obtained from \eqref{eq:prod-over} by 
\begin{equation} \label{eq:sigma-i-a-2}
\Xi_A(\delta(\Delta_i),\dots): \mathit{CM}^*(f) \otimes \mathit{CM}^*(f)^{\otimes 2p} \longrightarrow \mathit{CM}^{*-i-2c_1(A)}(f).
\end{equation}
We also find it convenient to add up over all $A$, with the usual $q^A$ coefficients. Fix cocycles $b$ and $\tilde{b}$ and insert them into \eqref{eq:sigma-i-a-2} at the marked points labeled $(1,\dots,p)$ and $(\tilde{1},\dots,\tilde{p})$, respectively, with signs as in \eqref{eq:multi-b}. This yields a chain map
\begin{equation} \label{eq:multi-b-3}
\Xi_{\tilde{b},b}(\delta(\Delta_i),\cdot):
\mathit{CM}^*(f) \longrightarrow (\mathit{CM}(f) \otimes \Lambda)^{^{*+p|b|+p|\tilde{b}|}}.
\end{equation}
The outcome of the parametrized moduli space argument outlined above is a chain homotopy
\begin{equation} \label{eq:p1}
\Xi_{\tilde{b},b}(\delta(\Delta_i),\cdot) \htp 
\Sigma_{\tilde{b} \ast b}(\Delta_i,\cdot).
\end{equation}
We will be somewhat brief about the final step, since that is a general issue involving equivariant cohomology, and not really specific to our situation. One can construct chain maps like \eqref{eq:multi-b-3} not just for $\delta(\Delta_i)$, but for other $\bF_p$-coefficient cycles in $S^\infty/(\bZ/p) \times S^\infty/(\bZ/p)$, such as $\Delta_{i_1} \times \Delta_{i_2}$. In that case, there is a simple decomposition formula
\begin{equation} \label{eq:p2}
\Xi_{\tilde{b},b}(\Delta_{i_1} \times \Delta_{i_2}, \cdot) =
(-1)^{|b|\, |\tilde{b}|\,\frac{p(p-1)}{2}} \Xi_{\tilde{b}}(\Delta_{i_1}, \Xi_b(\Delta_{i_2},\cdot))
\end{equation}
where the Koszul sign arises from reordering $(\tilde{b},b,\tilde{b},b,\dots)$ into $(\tilde{b},\dots,\tilde{b},b,\dots,b)$. Finally, homologous cycles give homotopic maps. One can use that, and the decomposition of $\delta(\Delta_i)$ into product cycles from Section \ref{subsec:kunneth}, to obtain a further homotopy
\begin{equation} \label{eq:p3}
\Xi_{\tilde{b},b}(\delta(\Delta_i),\cdot) \htp
 \begin{cases} \displaystyle \sum_{i_1+i_2 = i} 
 \Xi_{\tilde{b},b}(\Delta_{i_1} \times \Delta_{i_2},\cdot) & \text{if $i$ is odd or $p=2$,} \\[1em]
\displaystyle \sum_{\substack{i_1+i_2=i \\ \text{$i_k$ even}}} \Sigma_{\tilde{b},b}(\Delta_{i_1} \times \Delta_{i_2},\cdot) & \text{if $i$ is even and $p>2$.}
\end{cases}
\end{equation}
The combination of \eqref{eq:p1}, \eqref{eq:p2} and \eqref{eq:p3} then completes the argument.
\end{proof}

\subsection{\label{subsec:pi-operation}}
We now merge ideas from Sections \ref{subsec:augmented-moduli-spaces-of-holomorphic-curves} and \ref{subsec:equivariant-moduli-space}, by which we mean that we take moduli spaces parametrized by cells in $S^\infty/(\bZ/p)$, and add an additional freely moving marked point to the domain. The starting point is, once more, the family \eqref{eq:s2-family}. 
From its construction as a blowup of $C \times S \rightarrow S$, this inherits a (diagonal) $(\bZ/p)$-action, which we denote by $\sigma_{\scrC}$. %By definition, it permutes the canonical sections as follows:
%\begin{equation}
%\begin{aligned}
%& \sigma_{\scrC} \circ z_{\scrC,0} = z_{\scrC,0} \circ \sigma, \\
%& \sigma_{\scrC} \circ z_{\scrC,k} = z_{\scrC,k+1} \circ \sigma, \;\;\text{ for } k = 1,\dots,p-1, \\
%& \sigma_{\scrC} \circ z_{\scrC,p} = z_{\scrC,1} \circ \sigma, \\
%& \sigma_{\scrC} \circ z_{\scrC,\infty} = z_{\scrC,\infty} \circ \sigma, \\
%& \sigma_{\scrC} \circ z_{\scrC,*} = z_{\scrC,*} \circ \sigma, \\
%\end{aligned}
%\end{equation}

Fix an almost complex structure $J$. An equivariant fibrewise inhomogeneous term is a complex anti-linear map
\begin{equation}
\nu_{\scrC/S}^{\mathit{eq}}: T(\scrC^{\mathit{reg}}/S) \longrightarrow TM,
\end{equation}
where both bundles have been pulled back to $S^\infty \times_{\bZ/p} \scrC^{\mathit{reg}} \times M$. When restricted to any $S^{2k-1} \times_{\bZ/p} \scrC^{\mathit{reg}} \times M$, it should vanish outside a compact subset (meaning, it's zero in a neighbourhood of $S^{2k-1} \times_{\bZ/p} \scrC^{\mathit{sing}} \times M$; the restriction to $S^{2k-1}$ follows our usual process of treating $S^\infty$ as a direct limit of finite-dimensional manifolds). As before, one can think of it more explicitly as a family $\nu_{\scrC/S,w}^{\mathit{eq}}$ of fibrewise inhomogeneous terms parametrized by $w \in S^\infty$, and satisfying a $(\bZ/p)$-equivariance property as in \eqref{eq:nu-equivariance}: 
\begin{equation}
\nu_{\scrC/S,\tau(w),z,x}^{\mathit{eq}} = \nu_{\scrC/S,w,\sigma_{\scrC}(z),x}^{\mathit{eq}} \circ D\sigma_{\scrC}: T(\scrC^{\mathit{reg}}/S)_z \rightarrow \mathit{TM}_x.
\end{equation}
The associated moduli space consists of triples $(w,v,u)$, where the parameters are $(w,v) \in S^\infty \times S$, $v$ being a regular value of \eqref{eq:s2-family}, and $u:\scrC_v \rightarrow M$ is a solution of the inhomogeneous Cauchy-Riemann equation given by $\nu_{\scrC_v,w}^{\mathit{eq}}$. These inherit a $(\bZ/p)$-action as in \eqref{eq:flip-u}:
\begin{equation}
(w,v,u) \longmapsto (\tau(w),\sigma^{-1}(v),u \circ \sigma_{\scrC}).
\end{equation}
We impose the usual incidence conditions, given by the (un)stable manifolds of critical points $x_0,\dots,x_p,x_\infty$, and by a codimension $2$ submanifold $\Omega$ at the $*$ marked point. Finally, we restrict to the interior of cells \eqref{eq:cellrestriction}. Denote the resulting moduli spaces by $\scrM_A(\Delta_i \times \scrC,x_0,\dots,x_p,x_\infty,\Omega)$. Their expected dimension remains as in \eqref{eq:equivariant-moduli-space-dimension}.

We omit the discussion of transversality and of the compactifications, which is simply a combination of those in Sections \ref{subsec:augmented-moduli-spaces-of-holomorphic-curves} and \ref{subsec:equivariant-moduli-space}. The outcome of isolated-point-counting in our moduli space are maps
\begin{equation} \label{eq:pipi}
\Pi_A(\Delta_i,\dots): \mathit{CM}^*(f) \otimes \mathit{CM}^*(f)^{\otimes p} \longrightarrow \mathit{CM}^{*-i-2c_1(A)}(f)
\end{equation}
which, due to the structure of the compactified one-dimensional moduli spaces, satisfy the same equation as the $\Sigma_A(\Delta_i,\dots)$, see \eqref{eq:parametrized-boundary}. Specializing to coefficients in $\bF_p$, and fixing a Morse cocycle $b$, one can therefore use \eqref{eq:pipi} to define a chain
\begin{equation} \label{eq:multi-b-2}
\Pi_{A,b}: \mathit{CM}^*(f) \longrightarrow (\mathit{CM}(f)[[t,\theta]])^{*+p|b|-2c_1(A)}
\end{equation}
exactly as in \eqref{eq:s-b}. Moreover, up to homotopy that map depends linearly on $[b]$, as in Lemma \ref{th:linearity}. Again up to homotopy, it is also independent of all choices, including that of $\Omega$ within its cohomology class $a = [\Omega] \in H^2(M;\bZ)$. 

\begin{definition} \label{th:q-pi}
For $a \in H^2(M;\bZ)$, $b \in H^*(M;\bF_p)$ and $A \in H_2(M;\bZ)$, we define $Q\Pi_{A,a,b}$ to be the cohomology level map induced by \eqref{eq:multi-b-2}. Adding up those maps with weights $q^A$ yields \eqref{eq:modified-sigma}.
\end{definition}

\begin{proposition} \label{th:pi-divisor}
Fix some $A$ and integer $i$. For suitable choices made in the definition, we have $\Pi_A(\Delta_i,\dots) = (A \cdot \Omega) \Sigma_A(\Delta_i,\dots)$. As a consequence, we have $Q\Pi_{A,a,b} = (A \cdot \Omega) Q\Sigma_{A,b}$ for all $i$ and $A$, which is equivalent to \eqref{eq:pi-divisor}.
\end{proposition}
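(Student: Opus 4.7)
The plan is to mimic the proof of Proposition \ref{th:divisor-equations-proposition} in the parametrized (equivariant) setting, with the cell $\Delta_i$ playing the role of a passive parameter space. As in that earlier argument, the key is to choose an equivariant inhomogeneous term on $C$ that can be pulled back along the forgetful map $\scrC \to C$ on each fibre (over the regular locus of $\scrC \to S$), so that the counts on $\scrC$ reduce exactly to $\Omega$-intersection counts on $C$.

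First, I would strengthen Lemma \ref{th:smooth-1b} to the equivariant context: given $J$ as in Assumption \ref{th:regular-1b}, choose an equivariant inhomogeneous term $\nu_C^{\mathit{eq}}$ satisfying the parametrized version of Assumption \ref{th:regular-2}, with the two additional conditions that (a) $\nu_C^{\mathit{eq}}$ vanishes in a neighbourhood of every marked point $z_{C,k}$, and (b) for every $(w,u)$ in a zero-dimensional moduli space $\scrM_A(\Delta_i \times C, x_0,\dots,x_p,x_\infty)$ and every one-dimensional one, the map $u$ meets $\Omega$ transversally and away from the marked points. Both are generic conditions on $\nu_C^{\mathit{eq}}$ (the transversality of evaluation at interior points is standard, and intersections at marked points are ruled out by dimension counting since $\Omega \cap W^{s/u}(x)$ has higher codimension), and they are preserved by the cyclic symmetry used throughout Section \ref{subsec:equivariant-moduli-space}.

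Next, because $\nu_C^{\mathit{eq}}$ vanishes near the marked points, it pulls back canonically under $\scrC^{\mathit{reg}} \to C$ (defined outside the nodes) to an equivariant fibrewise inhomogeneous term $\nu_{\scrC/S}^{\mathit{eq}}$ on the family. On singular fibres $\scrC_v = \scrC_{v,+} \cup \scrC_{v,-}$ this $\nu_{\scrC_v/S}^{\mathit{eq}}$ is supported on $\scrC_{v,+} \cong C$ and vanishes on the bubble component $\scrC_{v,-}$. The forgetful map (forget the $*$-marked point and collapse the resulting unstable component) then induces a continuous map
\begin{equation}
\bar\scrM_A(\Delta_i \times \scrC, x_0,\dots,x_p,x_\infty,\Omega) \longrightarrow \bar\scrM_A(\Delta_i \times C, x_0,\dots,x_p,x_\infty),
\end{equation}
and in the zero-dimensional case the target is $\scrM_A(\Delta_i \times C, x_0,\dots,x_p,x_\infty)$ by Lemma \ref{th:parametrized-space}(i). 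For each point $(w,u)$ in the target, a preimage is the same datum equipped with a choice of $v \in S$ regular with $u(v) \in \Omega$; by condition (b) above, this set is precisely $u^{-1}(\Omega)$ as a transverse intersection, contributing $A \cdot \Omega$ with the correct signs (the orientation of $S$ used in \eqref{eq:s2-family} enters exactly as in the non-equivariant case to produce the local intersection sign of $u$ with $\Omega$). Summing gives the strict identity $\Pi_A(\Delta_i,\dots) = (A \cdot \Omega)\,\Sigma_A(\Delta_i,\dots)$ for this particular $\nu_{\scrC/S}^{\mathit{eq}}$.

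The last point is that this pulled-back $\nu_{\scrC/S}^{\mathit{eq}}$ generally fails the full regularity requirement of Assumption \ref{th:regular-2b} (in particular part (iv)), so it is not admissible for the definition of $\Pi_A$ as such. As in the proof of Proposition \ref{th:divisor-equations-proposition}, one passes to a small equivariant perturbation that does satisfy Assumption \ref{th:regular-2b}; by compactness and regularity of the already transverse zero-dimensional moduli space, the signed count is unchanged, and this is the step I expect to be the only real subtlety, since one must ensure the perturbation respects the $(\bZ/p)$-symmetry and the fibrewise structure near $\scrC^{\mathit{sing}}$. The cohomological statement $Q\Pi_{A,a,b} = (A \cdot \Omega)\, Q\Sigma_{A,b}$, and hence \eqref{eq:pi-divisor}, follow by summing over $i$ with the $(t,\theta)$-weights of \eqref{eq:multi-b} and \eqref{eq:multi-b-2}, and then over $A$ with weights $q^A$, invoking $\partial_a q^A = (a \cdot A) q^A$.
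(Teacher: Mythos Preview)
Your approach is essentially the same as the paper's: reduce to the non-equivariant divisor argument (Proposition \ref{th:divisor-equations-proposition}) by treating $\Delta_i$ as a passive parameter, pull back an equivariant inhomogeneous term that vanishes near the marked points, identify the zero-dimensional moduli spaces via the forgetful map, and then perturb to an admissible $\nu_{\scrC/S}^{\mathit{eq}}$ without changing the count. The paper's own proof is a two-line reference back to Proposition \ref{th:divisor-equations-proposition}, and you have correctly unpacked what that entails.

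One small point of emphasis worth tightening: in your last paragraph you say the cohomological identity follows ``by summing over $i$''. Strictly speaking the choices (in particular the small perturbation) depend on $i$, so you are not summing a single chain-level identity. The paper makes this explicit: the perturbation argument handles only finitely many moduli spaces at a time, which is precisely why $A$ and $i$ are fixed in the first sentence of the statement. The passage to $Q\Pi_{A,a,b} = (A \cdot \Omega)\, Q\Sigma_{A,b}$ then goes coefficient-by-coefficient on cohomology, using that both sides are independent of choices up to chain homotopy; your argument already contains this, it just needs to be phrased as ``equality of each $(t,\theta)$-coefficient on cohomology'' rather than as a sum.
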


\begin{proof}
The geometric part of this is exactly as in Proposition \ref{th:divisor-equations-proposition}: for suitably correlated choices of inhomogeneous terms, the underlying moduli spaces bear the same relationship. Since that argument involves making a small perturbation, we can only apply it to finitely many moduli spaces at once, and that explains the bound on $i$ in the statement. As a consequence, we get equality of the $i$-th coefficient in $Q\Pi_{A,a,b}$ and $(A \cdot \Omega) Q\Sigma_{A,b}$. 
\end{proof}

\begin{remark} \label{th:generalized-pi}
Both in Section \ref{subsec:augmented-moduli-spaces-of-holomorphic-curves} and here, we have used an evaluation constraint at a codimension two submanifold $\Omega \subset M$, which limits $Q\Pi_{a,b}$ to $a \in H^2(M;\bZ)$. One can replace that by a pseudo-cycle of arbitrary dimension $d$ (see e.g.\ \cite{zinger10}) and then, the definition goes through without any significant changes for $a \in H^d(M;\bZ)$. In fact, one could even take a mod $p$ pseudo-cycle. This consists of an oriented manifold with boundary $N^d$, such that $\partial N$ carries a free $(\bZ/p)$-action, and a map $f: N \rightarrow M$ such that $f|\partial N$ is $(\bZ/p)$-invariant, with the following properties: the limit points of $f$ are contained in the image of a map from a manifold of dimension $(d-2)$, and the limit points of $f|\partial N$ are contained in the image of a map from a manifold of dimension $(d-3)$. While we do not intend to develop the theory of mod $p$ pseudo-cycles here, this should allow one to define $Q\Pi_{a,b}$ for all $a \in H^d(M;\bF_p)$. The proof of \eqref{eq:pi-relation} given in the next section extends to such generalizations in a straightforward way, but of course, there is no analogue of \eqref{eq:pi-divisor} in codimensions $d>2$.
\end{remark}

\section{Proof of Theorem \ref{th:covariantly-constant}}
This section derives \eqref{eq:pi-relation}. Together with the previously established \eqref{eq:pi-divisor}, that completes our proof of Theorem \ref{th:covariantly-constant}.

\subsection{}
We decompose the moduli spaces underlying $Q\Pi_{a,b}$ into pieces, where the position of the additional marked point is constrained to lie in one of the cells from Section \ref{sec:subsec-basic-top-b}. This means that instead of using $\Delta_i \times S \subset S^\infty \times S$ as parameter spaces, we look at the subspaces $\Delta_i \times W$, where 
\begin{equation}
W \in \{P_0,\, Q_0,\, \sigma^j(L_1), \sigma^j(B_2)\}.
\end{equation}
Within the framework of Section \ref{subsec:pi-operation}, it is unproblematic to ensure that all the resulting moduli spaces, denoted by $\scrM_A(\Delta_i \times \scrC|W,x_0,\dots,x_p,x_\infty, \Omega)$, satisfy the usual regularity and compactness properties. Point-counting in them gives rise to maps
\begin{equation} \label{eq:cell-maps}
\Pi_A(\Delta_i \times W,\dots): \mathit{CM}^*(f) \otimes \mathit{CM}^*(f)^{\otimes p} \longrightarrow \mathit{CM}^{*-i-2c_A(A)-|W|+2}(f).
\end{equation}
As in \eqref{eq:parametrized-boundary}, adjacencies between cells determine relations between the associated invariants. In our case, these are governed by \eqref{eq:partial-delta-1}--\eqref{eq:partial-delta-2} and \eqref{eq:diff-s2-1}--\eqref{eq:diff-s2-3}. Explicitly, the relations are
\begin{equation}
\begin{aligned}
& d\Pi_A(\Delta_i \times W, x_0,\dots,x_p) - (-1)^{i+|W|} \sum_{k=0}^p (-1)^{|x_0|+\cdots+|x_{k-1}|} \Pi_A(\Delta_i \times W,x_0,\dots,dx_k,\dots,x_p) 
\\ &
= \begin{cases} \displaystyle
\sum_j (-1)^* \Pi_A(\Delta_{i-1} \times \sigma^j W,x_0,x_1^{(j)},\dots,x_p^{(j)}) & \text{$i$ even,} \\ \displaystyle
(-1)^* \Pi_A(\Delta_{i-1} \times \sigma W,x_0,x_1^{(1)},\dots,x_p^{(1)}) - \Pi_A(\Delta_{i-1} \times W,x_0,x_1,\dots,x_p) & \text{$i$ odd}
\end{cases}
\\[1em] &
+ \text{(extra term depending on $W$)}.
\end{aligned}
\end{equation}
The last-mentioned term is zero if $W \in \{P_0,Q_0\}$, with the remaining cases being
\begin{align}
& 
\begin{aligned} &
(\text{extra term for $W = \sigma^jL_1$}) \\ & = (-1)^i \big(
\Pi_A(\Delta_i \times Q_0,x_0,x_1^{(j)},\dots,x_p^{(j)}) - \Pi_A(\Delta_i \times P_0,x_0,x_1^{(j)},\dots,x_p^{(j)}) \big), 
\end{aligned}
\\
&
\begin{aligned} &
(\text{extra term for $W = \sigma^jB_2$})
\\ & = (-1)^{i+1} \big(
\Pi_A(\Delta_i \times \sigma^{j+1} L_1,x_0,x_1^{(1)},\dots,x_p^{(1)}) - \Pi_A(\Delta_i \times \sigma^j L_1,x_0,x_1,\dots,x_p) \big).
\end{aligned}
\end{align}
As usual, we now specialize to coefficients in $\bF = \bF_p$. The relations above immediately imply the following:

\begin{lemma}
Fix a cocycle $b \in \mathit{CM}^*(f)$. Then, the $t$-linear map
\begin{equation} \label{eq:fixed-b-map}
\begin{aligned}
& \Pi_{A,b}^{\mathit{eq}}: C_{-*}(S)_{\mathit{eq}} \otimes \mathit{CM}^*(f) \longrightarrow (\mathit{CM}(f)[[t,\theta]])^{*+p|b|-2c_1(A)+2}, \\
&
W \otimes x \longmapsto (-1)^{|b|\,(|W|+|x|)} \sum_k \Big( \Pi_A(\Delta_{2k} \times W,x,b,\dots,b)
\\
&
\qquad \qquad \qquad \qquad +
(-1)^{|x|+|b|+|W|} \Pi_A(\Delta_{2k+1} \times W,x,b,\dots,b) \theta \Big)t^k, 
\\
&
W\,\theta \otimes x \longmapsto (-1)^{|b|\,(|W|+|x|)}
 \sum_k \Big( (-1)^{|x|} \Pi_A(\Delta_{2k} \times W,x,b,\dots,b) \theta \\
& \qquad \qquad \qquad \qquad - (-1)^{|b|+|W|}
 \sum_j  j \Pi_A(\Delta_{2k+1} \times \sigma^j W,x,b,\dots,b) t \Big)t^k,
\end{aligned}
\end{equation}
is a chain map.
\end{lemma}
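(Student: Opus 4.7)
The plan is to verify the chain map identity $d\circ\Pi_{A,b}^{\mathit{eq}}=\Pi_{A,b}^{\mathit{eq}}\circ d_{\mathit{eq}}$ directly on the basis elements $W\otimes x$ and $W\theta\otimes x$ of $C_{-*}(S)_{\mathit{eq}}\otimes\mathit{CM}^*(f)$, by expanding both sides in the operations $\Pi_A(\Delta_i\times W,x_0,\dots,x_p)$ and comparing with the parametrized moduli-space boundary formula displayed just before the statement.

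First, I would exploit that $b$ is a cocycle in order to reduce that boundary relation. Specializing $x_1=\dots=x_p=b$, the terms $\Pi_A(\cdots,db,\cdots)$ all vanish, so only $\Pi_A(\Delta_i\times W,dx,b,\dots,b)$ survives from the Morse-differential side. Because $(b,\dots,b)$ is invariant under cyclic permutation, every Koszul sign $(-1)^*$ drops out, and the $\Delta_i$-boundary contribution collapses to $\sum_{j=0}^{p-1}\Pi_A(\Delta_{i-1}\times\sigma^jW,x,b,\dots,b)$ for $i$ even, and to $\Pi_A(\Delta_{i-1}\times\sigma W,x,b,\dots,b)-\Pi_A(\Delta_{i-1}\times W,x,b,\dots,b)$ for $i$ odd. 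These match precisely the operators $\mathit{id}+\sigma+\cdots+\sigma^{p-1}$ and $\sigma-\mathit{id}$ through which the equivariant differential $d_{\mathit{eq}}$ is defined. The remaining ``extra term depending on $W$'', which records the effect of the $*$-marked point crossing the boundary of $W\subset S$, in turn matches the cellular piece $dW$ of $d_{\mathit{eq}}$: for $W=\sigma^jL_1$ it reproduces $d(\sigma^jL_1)=Q_0-P_0$, and for $W=\sigma^jB_2$ it reproduces $d(\sigma^jB_2)=\sigma^jL_1-\sigma^{j+1}L_1$, while for $W\in\{P_0,Q_0\}$ it vanishes.

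The final step is an assembly and sign check: expand $\Pi_{A,b}^{\mathit{eq}}(d_{\mathit{eq}}(W\otimes x))$ and $\Pi_{A,b}^{\mathit{eq}}(d_{\mathit{eq}}(W\theta\otimes x))$ using the definition \eqref{eq:fixed-b-map}, and verify that each of the three groups of terms identified above aligns, with the correct sign, with the corresponding piece of $d\,\Pi_{A,b}^{\mathit{eq}}$. The most delicate point is the $\theta$-free, $t^{k+1}$ coefficient in the image of $W\theta\otimes x$, which is built from the weighted sum $\sum_{j=0}^{p-1}j\,\Pi_A(\Delta_{2k+1}\times\sigma^jW,\cdots)$. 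Its precise form is chosen so that, in view of the identity \eqref{eq:sum-jj}, the cyclic-sum contribution $(\mathit{id}+\sigma+\cdots+\sigma^{p-1})W\,t^{k+1}$ arising from $d_{\mathit{eq}}(W\theta)$ is exactly what cancels the odd $\Delta_{2k+1}$-boundary contribution produced on the LHS, mirroring the motivation behind the endomorphism $\tilde\theta$ from the end of Section 2d.

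I expect the Koszul sign verification in this last step to be the main obstacle: the definition \eqref{eq:fixed-b-map} carries several overlapping sign conventions (depending on $|W|$, $|x|$, $|b|$, and on interactions between $t$ and $\theta$), and they have been tuned precisely for the cancellations above to go through. Structurally the argument is parallel to the proof of Lemma \ref{th:linearity}, with an additional layer of cellular combinatorics on $S$ superimposed on the equivariant combinatorics on $S^\infty$.
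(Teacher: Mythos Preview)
Your proposal is correct and is precisely the verification the paper has in mind: the paper simply states that ``the relations above immediately imply'' the lemma, and what you have written is an accurate unpacking of that implication, with the same specialization $x_1=\cdots=x_p=b$ collapsing the permutation terms and the three pieces (Morse differential, $\partial\Delta_i$, $\partial W$) matching the three ingredients of $d_{\mathit{eq}}$.
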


Following \eqref{eq:equivariant-complex-map}, one can think of \eqref{eq:fixed-b-map} as a special case of a more general structure, which would be a $t$-linear chain map
\begin{equation} \label{eq:very-general}
(C_{-*}(S) \otimes \mathit{CM}^*(f) \otimes \mathit{CM}^*(f)^{\otimes p})_{\mathit{eq}} \longrightarrow (\mathit{CM}(f)[[t,\theta]])^{*-2c_1(A)+2}.
\end{equation}
Here, the group $\bZ/p$ acts on $C_{-*}(S)$, as well as on $\mathit{CM}^*(f)^{\otimes p}$ by cyclic permutations. As in the previous situation, \eqref{eq:very-general} would be useful in order to prove that \eqref{eq:fixed-b-map} only depends on the cohomology class of $b$, and is additive. For our purposes, however, we can work around that, since all necessary computations can be done using a fixed cocycle $b$.

\subsection{}
At this point, everything we need can be extracted from an analysis of the chain map \eqref{eq:fixed-b-map}.
%**REF(15)**The next three lemmas, added ``W=" and removed ``only", as requested by reviewer
\begin{lemma} \label{th:specialize-1}
Suppose that we specialize \eqref{eq:fixed-b-map} to using $W=B_2 + \sigma B_2 + \cdots + \sigma^{p-1}B_2 \in C_2(S)_{\mathit{eq}}$. Then, the resulting chain map $\mathit{CM}^*(f) \rightarrow (\mathit{CM}(f)[[t,\theta]])^{*+p|b|-2c_1(A)}$ is equal to $\Pi_{A,b}$.
\end{lemma}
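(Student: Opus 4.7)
The plan is to reduce the lemma to a single geometric decomposition identity, and then track the signs. Set $W_0 = B_2 + \sigma B_2 + \cdots + \sigma^{p-1}B_2$. One first checks that $W_0$ is a cocycle in $C_{-*}(S)_{\mathit{eq}}$: both $d W_0$ and $(\sigma W_0 - W_0)$ vanish, the former by telescoping using \eqref{eq:diff-s2-3}, the latter by $\sigma$-invariance. Substituting $W = W_0$ into the first line of \eqref{eq:fixed-b-map} with $|W_0|=2$, the prefactor $(-1)^{|b|(|W_0|+|x|)}$ collapses to $(-1)^{|b||x|}$ and the internal sign $(-1)^{|x|+|b|+|W_0|}$ collapses to $(-1)^{|x|+|b|}$, so the output matches \eqref{eq:multi-b-2} formally, provided that
\begin{equation}
\Pi_A(\Delta_i, x_0,\dots,x_p) = \sum_{j=0}^{p-1} \Pi_A(\Delta_i \times \sigma^j B_2, x_0,\dots,x_p).
\label{eq:pointcount-decomposition}
\end{equation}

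The next step is to prove \eqref{eq:pointcount-decomposition} by decomposing the zero-dimensional moduli space underlying the left-hand side according to the cell of $S$ containing the parameter $v$. The cells $\sigma^j B_2$ cover $S$ with pairwise intersections contained in the 1-skeleton, together with the two fixed values $P_0, Q_0$. I would argue that for generic equivariant choices, any point $(w,v,u)$ in a zero-dimensional moduli space $\scrM_A(\Delta_i \times \scrC, \ldots)$ has $v$ lying in the interior of some $\sigma^j B_2$. Two exclusions are needed: if $v$ is a singular value of $\scrC \to S$ (i.e.\ $v \in P_0 \cup Q_0$), then $v$ is frozen and the expected dimension drops by $2$, so Assumption \ref{th:regular-2b}(i)--(ii) forces such loci to be empty; if $v$ lies in the relative interior of some $\sigma^j L_1$, then $v$ is constrained to a codimension-$1$ real submanifold of $S$, and a further small equivariant perturbation of $\nu_{\scrC/S}^{\mathit{eq}}$ (compatible with the $(\bZ/p)$-action and carried out as in the proof of Lemma \ref{th:parametrized-space}) evacuates this locus from the zero-dimensional moduli space.

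The last step is to verify that orientations are consistent with \eqref{eq:pointcount-decomposition}. Each $B_2$ carries the complex orientation by construction, and since $\sigma$ acts holomorphically on $S$, the same is true of each $\sigma^j B_2$. These orientations therefore patch together to give the complex orientation of $S$ used in the definition of $\Pi_A(\Delta_i,\ldots)$ in Section \ref{subsec:pi-operation}. Thus the signed count in the left-hand side of \eqref{eq:pointcount-decomposition} is the sum of the signed counts on each piece, which is exactly the right-hand side. The analogous argument works for the coefficient of $\theta$, using the cells $\Delta_{2k+1}$ in place of $\Delta_{2k}$.

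The main obstacle is really just the combinatorics of the sign verification in \eqref{eq:fixed-b-map} (in particular, that the $j$-weighted sum in the $W\theta \otimes x$ line does not pollute the specialization to $W_0$, which happens because for $W = W_0$ we only use the untwisted line $W \otimes x$ of \eqref{eq:fixed-b-map}); the geometric input — a codimension argument plus orientation patching — is standard once the equivariant transversality setup of Section \ref{subsec:pi-operation} is in place.
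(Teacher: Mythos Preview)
Your proof is correct and follows essentially the same approach as the paper's: both reduce to showing that no point of the zero-dimensional moduli space $\scrM_A(\Delta_i \times \scrC,\ldots)$ has its parameter $v$ in a cell of dimension $<2$, so that the space decomposes as a disjoint union over the 2-cells $\sigma^j B_2$, and the signed counts add up. One minor slip: the singular values of $\scrC \to S$ are all $p+2$ marked points, not just $P_0 \cup Q_0$ (the remaining ones $z_{C,1},\dots,z_{C,p}$ lie in the interiors of the 2-cells); this does not affect your argument, and the paper phrases the exclusion uniformly via the regularity of the cell-restricted spaces $\scrM_A(\Delta_i \times \scrC|W,\ldots)$ rather than by invoking singularity.
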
 % should be * + p |b| - 2 c_1(A)

\begin{proof}
This is essentially by definition. We are considering the map
\begin{equation} \label{eq:spec}
\begin{aligned} 
x \longmapsto (-1)^{|b|\,|x|} & \sum_{j,k} \Big( \Pi_A(\Delta_{2k} \times \sigma^j(B_2),x,b,\dots,b) \\[-1em] & \qquad \qquad +
(-1)^{|b|+|x|} \Pi_A(\Delta_{2k+1} \times \sigma^j(B_2),x,b,\dots,b) \theta \Big) t^k.
\end{aligned}
\end{equation}
The regularity of the spaces $\scrM_A(\Delta \times \scrC|W, x_0,\dots,x_p,x_\infty,\Omega)$ for cells $W$ of dimension $<2$ implies that in a zero-dimensional space $\scrM_A(\Delta \times \scrC,x_0,\dots,x_p,x_\infty,\Omega)$, none of the points arises from a parameter value $v \in S$ which belongs to one of those cells. In other words, that space $\scrM_A(\Delta \times \scrC,x_0,\dots,x_p,x_\infty,\Omega)$ is the disjoint union of $\scrM_A(\Delta \times \scrC|\sigma^j B,x_0,\dots,x_p,x_\infty,\Omega)$. % Hence, the sum over $j$ in \eqref{eq:spec} just recovers \eqref{eq:multi-b-2}.
\end{proof}

\begin{lemma} \label{th:specialize-2}
Suppose that we specialize \eqref{eq:fixed-b-map} to using $W=P_0 \in C_0(S)_{\mathit{eq}}$, and pass to cohomology. Then, the resulting map is equal to the following: take all possible decompositions $A = A_1+A_2$, and add up 
\begin{equation} \label{eq:comp-1}
H^*(M;\bF_p) \xrightarrow{\ast_{A_1} a}
H^{*+2-2c_1(A_1)}(M;\bF_p) \xrightarrow{Q\Sigma_{b,A_2}} (H(M;\bF_p)[[t,\theta]])^{*+p|b|+2-2c_1(A)},
\end{equation}
where $a = [\Omega] \in H^2(M;\bZ)$.
\end{lemma}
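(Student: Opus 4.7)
The plan is to analyze $\scrM_A(\Delta_i \times \scrC \mid P_0, x_0, \dots, x_p, x_\infty, \Omega)$ geometrically. Since $P_0 = \{v=0\}$ and $0 = z_{C,0}$ is one of the critical values of the family \eqref{eq:s2-family}, the restriction $v \in P_0$ forces every fiber to be the nodal surface $\scrC_0 = \scrC_{0,+} \cup \scrC_{0,-}$ of Section~\ref{subsec:augmented-moduli-spaces-of-holomorphic-curves}: the principal component $\scrC_{0,+} \cong C$ carries $z_{\scrC_0,1},\dots, z_{\scrC_0,p}, z_{\scrC_0,\infty}$ and the node, while the bubble $\scrC_{0,-} \cong \bP^1$ carries $z_{\scrC_0,0}$, $z_{\scrC_0,*}$, and the node.

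First I would arrange that the equivariant fibrewise inhomogeneous term $\nu_{\scrC/S}^{\mathit{eq}}$ vanishes on every bubble component (strengthening, but still compatible with, Assumption~\ref{th:regular-2b}). Under this choice any solution $u$ on $\scrC_0$ decomposes as a pair $(u_+, u_-)$, where $u_+$ satisfies the parametrized inhomogeneous equation of Section~\ref{subsec:equivariant-moduli-space} on $C$ with parameter $w \in \Delta_i$ and the usual incidences, while $u_-$ is genuinely $J$-holomorphic with $u_-(z_{C,0}) \in W^u(x_0)$, $u_-(z_{\scrC,*}) \in \Omega$, and matching condition $u_-(\mathrm{node}) = u_+(\mathrm{node})$. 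For each splitting $A = A_1 + A_2$ (with $u_- \in A_1$ and $u_+ \in A_2$) this exhibits the relevant stratum of the moduli space as an evaluation fibre product over $M$ of the three-pointed $J$-holomorphic sphere moduli space $\scrN_{A_1}(x_0, \Omega, \cdot)$ underlying the small quantum product with $\scrM_{A_2}(\Delta_i \times C, \cdot, x_1, \dots, x_p, x_\infty)$ from Section~\ref{subsec:equivariant-moduli-space}.

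Next I would convert the fibre product over $M$ into a Morse-theoretic composition via the standard device used in the proofs of Lemma~\ref{th:p-plus-one-fold} and Proposition~\ref{th:divisor-equations-proposition}: insert a finite gradient segment of length $\ell \in (0,\infty)$ between $u_-(\mathrm{node})$ and $u_+(\mathrm{node})$, and let $\ell \to \infty$ so that the trajectory breaks at an intermediate critical point $x'$, transferring the matching into incidences $u_-(\mathrm{node}) \in W^s(x')$ and $u_+(\mathrm{node}) \in W^u(x')$. Since $\nu_{\scrC/S}^{\mathit{eq}}$ vanishes on the bubble, this interpolation is insensitive to $w \in \Delta_i$, and the resulting cobordism identifies the $\bF_p$-count of the relevant zero-dimensional moduli space with
\begin{equation*}
\sum_{A_1+A_2=A}\ \sum_{x'}\ \#\scrN_{A_1}(x_0, \Omega, x')\cdot \Sigma_{A_2}(\Delta_i, x', x_1, \dots, x_p).
\end{equation*}
The first factor is precisely the matrix coefficient of the chain-level quantum product $x_0 \mapsto a \ast_{A_1} x_0$, and assembling the second factor with the $t, \theta$-weights and signs of \eqref{eq:fixed-b-map} specialized to $W = P_0$ (where $|W| = 0$ removes any $W$-dependence) reproduces the composition \eqref{eq:comp-1} summed over $A_1 + A_2 = A$.

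The main obstacle is really just bookkeeping: checking that the Koszul signs in \eqref{eq:fixed-b-map} line up with those built into $Q\Sigma_{b,A_2}$ and the small quantum product, and that the Morse interpolation at the node can be performed compatibly with the generic choices demanded by Assumption~\ref{th:regular-2b} without disturbing regularity elsewhere. Both issues are routine: the signs agree because $|P_0| = 0$, and the interpolation only modifies the moduli problem in a neighbourhood of the node, where $\nu_{\scrC/S}^{\mathit{eq}}$ already vanishes.
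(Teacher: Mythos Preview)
Your proposal is correct and follows essentially the same approach as the paper: identify the fibre over $P_0$ as the nodal curve $\scrC_0 = \scrC_{0,+} \cup \scrC_{0,-}$, decouple the two components, and then insert a finite-length Morse trajectory at the node, letting its length go to infinity to produce the chain-level composition \eqref{eq:comp-1}. The only minor variation is that you set the inhomogeneous term to vanish on $\scrC_{0,-}$, whereas the paper merely requires it to be independent of $w \in S^\infty$; the paper's weaker choice avoids having to separately check regularity for genuinely $J$-holomorphic three-pointed spheres (in particular the constant ones for $A_1 = 0$), but both choices work.
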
 

\begin{proof}
This time, the reason is geometric. Using $P_0$ means that we are restricting to a particular fibre of \eqref{eq:s2-family}, which is the nodal surface from Figure \ref{fig:nodal}(i). Recall that each component of that surface carries an inhomogeneous term,  which additionally depends on parameters in $S^\infty$. However, without violating regularity or other restrictions, one can arrange that the inhomogeneous term on the component which is a three-pointed sphere ($\scrC_{0,-}$ in the notation from Section \ref{subsec:augmented-moduli-spaces-of-holomorphic-curves}) is independent of those parameters.
\begin{figure}
\begin{centering}
\includegraphics{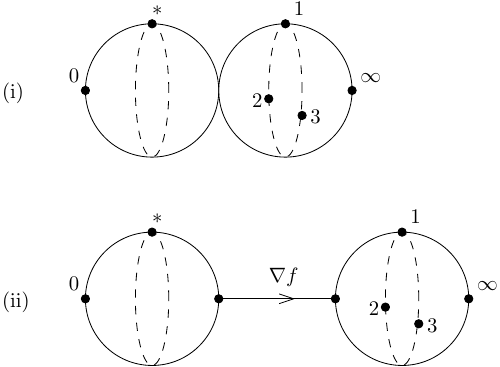}
\caption{\label{fig:nodal}A schematic picture of the proof of Lemma \ref{th:specialize-2}, with $p = 3$.}
\end{centering}
\end{figure}
After that, one inserts a finite length Morse flow line between the two components, as in Figure \ref{fig:nodal}(ii). In the same way as in Lemma \ref{th:p-plus-one-fold}, the resulting (varying length) moduli space gives a chain homotopy between our operation and the chain map underlying the composition \eqref{eq:comp-1}, in its Morse-theoretic incarnation.
%
%A word concerning the sum over decompositions $A = A_1 + A_2$ may be appropriate, just to reassure ourselves that this sum is indeed finite. We fix some $A$, and look at the component of our operation corresponding to some term $t^k$ or $t^k\theta$. This involves a choice of inhomogeneous terms parametrized by a compact part of $S^\infty$. That choice determines some $\epsilon<0$ such that the only decompositions $A = A_1+A_2$ with associated nonzero moduli spaces satisfy $\int_{A_i} \omega_M > -\epsilon$. That means $\int_{A_i} \omega_M < \int_A \omega_M + \epsilon$, and Gromov compactness therefore implies that only finitely many $A_i$ can occur.
\end{proof}

\begin{lemma} \label{th:specialize-3}
Suppose that we specialize \eqref{eq:fixed-b-map} to using $W=Q_0 \in C_0(S)_{\mathit{eq}}$, and pass to cohomology Then, the resulting map is equal to the following: take all possible decompositions $A = A_1+A_2$, and add up 
\begin{equation} \label{eq:comp-2}
H^*(M;\bF_p) \xrightarrow{Q\Sigma_{b,A_1}}
H(M;\bF_p)[[t,\theta]])^{*+p|b|-2c_1(A_1)} \xrightarrow{\ast_{A_2} a}
(H(M;\bF_p)[[t,\theta]])^{*+p|b|+2-2c_1(A)},
\end{equation}
where $a = [\Omega]$ as before.
\end{lemma}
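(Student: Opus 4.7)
The plan is to mirror the proof of Lemma \ref{th:specialize-2}, exchanging the roles of the two components of the nodal curve. Restricting the $v$-parameter to $Q_0 = \{v = \infty\}$ picks out the singular fibre of \eqref{eq:s2-family} over $z_{C,\infty}$: by the construction in Section \ref{subsec:augmented-moduli-spaces-of-holomorphic-curves}, this is a nodal surface whose principal component $\scrC_{\infty,+} \iso C$ carries $z_{\scrC,0},\dots,z_{\scrC,p}$ together with the node (in place of $z_{\scrC,\infty}$), while the three-pointed bubble $\scrC_{\infty,-}$ carries $z_{\scrC,\infty}$, the free marked point $z_{\scrC,*}$, and the node. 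Under the incidence conditions \eqref{eq:added-incidence}, the principal component has $W^u(x_0),\dots,W^u(x_p)$ inputs and an output at the node, while the bubble has an input at the node, an $\Omega$-incidence at $z_{\scrC,*}$, and an output at $W^s(x_\infty)$.

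First I would choose the equivariant fibrewise inhomogeneous term so that its restriction to $\scrC_{\infty,-} \times M$ is independent of $w \in S^\infty$. As in Lemma \ref{th:specialize-2}, this is compatible with regularity, since the bubble carries no marked points at which the equivariant structure plays a role. Next, following the strategy from Lemmas \ref{th:p-plus-one-fold} and \ref{th:specialize-2}, I would insert a finite-length Morse gradient trajectory at the node, treat its length as an auxiliary parameter, and let that length tend to infinity. In the limit the trajectory breaks and the two components decouple Morse-theoretically, producing a chain homotopy from the specialization to a sum, over $A_1 + A_2 = A$, of compositions of two pieces: (i) an operation of type $\Sigma_{A_1,b}$ on the principal component with its ``output'' taken at the break-point instead of at $W^s(x_\infty)$, and (ii) a three-pointed genus zero Gromov-Witten count in class $A_2$ on the bubble, whose three special points carry respectively the break-point evaluation, the $\Omega$-incidence, and the $W^s(x_\infty)$-incidence. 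The second piece is by definition a Morse-theoretic representative of $\ast_{A_2} a$, and passing to cohomology yields precisely \eqref{eq:comp-2}.

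The main point requiring care will be that, in contrast to Lemma \ref{th:specialize-2}, the bubble now sits on the output side of the principal component, so the order of composition of $Q\Sigma_{b,A_1}$ and $\ast_{A_2} a$ is reversed relative to \eqref{eq:comp-1}, and Koszul signs have to be tracked accordingly. Transversality for the parametrized moduli spaces, and compactness/convergence of the length-parameter homotopy, follow from the same package of hypotheses (Assumptions \ref{th:regular-1}, \ref{th:regular-1b} and the parametrized analogue of Assumption \ref{th:regular-2b}) already used throughout Sections \ref{subsec:augmented-moduli-spaces-of-holomorphic-curves} and \ref{subsec:pi-operation}, so I do not anticipate any genuinely new technical obstacle beyond the ones already handled in Lemma \ref{th:specialize-2}.
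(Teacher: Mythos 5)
Your proof is correct and follows essentially the same route as the paper: the paper's own treatment simply declares the proof "the same as for Lemma~\ref{th:specialize-2}" and then notes that the order of composition is reversed because over $v=\infty$ the bubble is attached at the output point $\infty \in C$ rather than at the input point $0 \in C$, which is precisely the observation you make explicit. Your expansion of the details — making the inhomogeneous term on $\scrC_{\infty,-}$ independent of $w\in S^\infty$, inserting a finite-length gradient segment at the node, and degenerating — matches the argument used for $P_0$ in the paper verbatim.
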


The proof is the same as for Lemma \ref{th:specialize-2}. Note that the operations in \eqref{eq:comp-2} appear in the opposite order from \eqref{eq:comp-1}. The reason is that over $v = 0$, the component $\scrC_{y,-}$ is attached to $\scrC_{v,+}$ at the point $0 \in C$, which serves as input of the $\Sigma$ operation; whereas for $v =\infty$, it is attached at the output point $\infty \in C$. Finally, we have the following, which establishes \eqref{eq:pi-relation}:

\begin{proposition}
$t Q\Pi_{a,b}$ equals the difference between \eqref{eq:comp-1} and \eqref{eq:comp-2}.
\end{proposition}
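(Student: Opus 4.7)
The plan is to apply the chain map \eqref{eq:fixed-b-map} to both sides of the cohomology relation \eqref{eq:coh-localize-1} of Lemma \ref{eq:cohomology-relations}, and then translate each side using the three specialisation lemmas already established. Because \eqref{eq:fixed-b-map} is a chain map, cohomologous inputs give chain-homotopic outputs, so the induced maps on $H^*(M;\bF_p)$ agree.

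Concretely, take $k=0$ in \eqref{eq:coh-localize-1}, which gives the relation
\begin{equation*}
(P_0 - Q_0) \sim (B_2 + \sigma B_2 + \cdots + \sigma^{p-1} B_2)\, t
\end{equation*}
in $C_{-*}(S)_{\mathit{eq}}$. Feeding the left-hand side into \eqref{eq:fixed-b-map} and passing to cohomology gives, by Lemmas \ref{th:specialize-2} and \ref{th:specialize-3}, the difference of the compositions \eqref{eq:comp-1} and \eqref{eq:comp-2} (with a fixed class $A$). Feeding in the right-hand side, and using the fact that the chain map is $t$-linear, gives $t$ times the map produced by $B_2 + \sigma B_2 + \cdots + \sigma^{p-1}B_2$, which by Lemma \ref{th:specialize-1} is precisely $t\, \Pi_{A,b}$. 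Equality of the two sides on cohomology yields the desired identity for each fixed $A$.

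The final step is to sum over $A$ with weights $q^A$ and extend $\Lambda$-linearly, which converts the identity into the one stated in the proposition, namely $tQ\Pi_{a,b} = Q\Sigma_b(a \ast -) - a \ast Q\Sigma_b(-)$. Combined with Proposition \ref{th:pi-divisor}, which identifies $Q\Pi_{a,b}$ with $\partial_a Q\Sigma_b$, this is exactly \eqref{eq:pi-relation}, so Theorem \ref{th:covariantly-constant} follows.

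The main obstacle I anticipate is bookkeeping: verifying that the Koszul signs, orientations of cells, and the conventions for the $t$-linear part of \eqref{eq:fixed-b-map} really produce the difference \eqref{eq:comp-1} minus \eqref{eq:comp-2} (rather than a sum or a swapped difference). This comes down to tracing through the orientations of $P_0$, $Q_0$, $B_2$ introduced in Section \ref{sec:subsec-basic-top-b} and matching them to the sign in front of each $\Pi_A(\Delta_{2k} \times W,\dots)$ contribution. The geometric content — that the parameter $v \in S$ degenerating to $0$ versus $\infty$ yields the two orderings of $\ast_a$ relative to $Q\Sigma_b$ — is already fully captured by Lemmas \ref{th:specialize-2} and \ref{th:specialize-3}; nothing further is required beyond the algebraic consequence of the cycle relation.
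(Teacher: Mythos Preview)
Your proposal is correct and follows essentially the same route as the paper's own proof: apply the chain map \eqref{eq:fixed-b-map} to the cohomology relation in $C_{-*}(S)_{\mathit{eq}}$ between $(P_0-Q_0)$ and $(B_2+\cdots+\sigma^{p-1}B_2)\,t$, then invoke Lemmas \ref{th:specialize-1}, \ref{th:specialize-2}, \ref{th:specialize-3}. The paper cites both \eqref{eq:coh-localize-1} and \eqref{eq:coh-localize-2}, whereas you use only the $k=0$ instance of \eqref{eq:coh-localize-1}; that single relation suffices, since the chain map is $t$-linear and you are specializing to one fixed cocycle.
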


\begin{proof}
By Lemma \ref{th:specialize-1}, $\Pi_{A,b} t$ is obtained by specializing \eqref{eq:fixed-b-map} to $(B_2+\cdots+\sigma^{p-1}B_2)t$. From \eqref{eq:coh-localize-1} and \eqref{eq:coh-localize-2}, we see that this is chain homotopic to specializing the same map to $(P_0-Q_0)$. Using Lemma \ref{th:specialize-2} and \ref{th:specialize-3} then yields the desired result.
\end{proof}

\section{Computations}
\label{sec:computations}
In this section, we explore the power of Theorem \ref{th:covariantly-constant} as a computational tool.

\subsection{\label{subsec:s2}}
Our first task is to work out the details of Example \ref{th:s2}, where $M$ is the two-sphere. We use the standard generator of $H_2(M;\bZ)$, and correspondingly write $\Lambda$ as a power series ring in one variable $q$. The quantum connection is
\begin{equation} \label{eq:explicit-nabla-s2}
\nabla = t q\partial_q + \begin{pmatrix} 0 & q \\ 1 & 0 \end{pmatrix}. 
\end{equation}
Let's temporarily use $\bQ$-coefficients, and allow inverses of $t$. If $\xi$ satisfies
\begin{equation} \label{eq:2nd-order}
(tq \partial_q)^2 \xi = q\xi,
\end{equation}
then the following endomorphism is covariantly constant with respect to \eqref{eq:explicit-nabla-s2}:
\begin{equation} \label{eq:explicit-xi}
\Xi = \begin{pmatrix}
-\xi (tq\partial_q \xi) &
-(t q\partial_q \xi)^2 \\
\xi^2 &
\xi (tq\partial_q \xi) 
\end{pmatrix}.
\end{equation}
It is straightforward to write down an explicit solution of \eqref{eq:2nd-order}:
\begin{equation} \label{eq:explicit-xi-1}
\xi = \sum_{k=0}^{\infty} \frac{1}{(k!)^2} q^k t^{-2k}.
\end{equation}

Pick a prime $p>2$. Take \eqref{eq:explicit-xi} with \eqref{eq:explicit-xi-1}, and truncate it by dropping all powers $q^p$ or higher. The remaining denominators are coprime to $p$, so we can reduce coefficients to $\bF_p$. The outcome, using some elementary combinatorics to simplify the formulae, is the matrix $\Sigma$ from \eqref{eq:explicit-xi-2}. For example, the $q^k t^{-2k}$ term of the $\sigma_{21}$ coordinate of \eqref{eq:explicit-xi} is $$\sum_{k=k_1+k_2} \tfrac{1}{(k_1!)^2} \tfrac{1}{(k_2!)^2} = \frac{1}{(k!)^2} \sum_{k=k_1+k_2} {{k}\choose{k_1}}{{k}\choose{k_2}} = \frac{1}{(k!)^2} {{2k}\choose{k}},$$ the second equality being the Chu-Vandermonde identity. We notice that this is the $\sigma_{21}$ component $\tfrac{(2k)!}{(k!)^4}$ of \eqref{eq:explicit-xi-2}. Similarly, the coefficient of $q^k t^{2-2k}$ in the $\sigma_{12}$ component of \eqref{eq:explicit-xi} is $$ \sum_{k=k_1+k_2} \tfrac{k_1}{(k_1!)^2} \tfrac{k_2}{(k_2!)^2},$$ and by using the Chu-Vandermonde identity on $$\tfrac{1}{(k-1)!^2} \sum_{k=k_1+k_2}{{k-1}\choose{k_1}} {{k-1}\choose{k_2}},$$ one obtains the coefficient of $q^k t^{2-2k}$ in the $\sigma_{12}$ component of \eqref{eq:explicit-xi-2}. A similar application of this identity can be used for the $\sigma_{11}$ component.

%**REF(16)**Added (...) to demonstrate how to go from this example to (1.28)
By construction, this endomorphism is covariantly constant modulo $q^p$; and the constant term (in $q$) of $-t^{p-1}\Sigma$ matches the cup product with $\mathit{St}(h) = -t^{p-1}h$ (see \eqref{eq:trivial-steenrod-2} for the sign convention). Therefore, $-t^{p-1}\Sigma$ and $Q\Sigma_h$ must agree modulo $q^p$. But for degree reasons, $Q\Sigma_h$ can't have terms of order $q^p$ or higher. The consequence is that $Q\Sigma_h = -t^{p-1}\Sigma$, as previously stated.

\begin{remark}
It is worthwhile spelling out the comparison with the fundamental solution of the quantum differential equation, mentioned in Remark \ref{th:fundamental-solution}. For $S^2$, the fundamental solution is \cite[Section 28.2]{clay} (note the differences in notation and conventions: our $t$ is their $-\hbar$; our $q$ is their $e^t$; our $t$ is their $H$)
\begin{equation}
\Psi = \begin{pmatrix} -tq\partial_q \eta & -t q\partial_q \xi \\ \eta & \xi \end{pmatrix}, \end{equation}
where $\xi$ is as in \eqref{eq:explicit-xi-1}, and
\begin{equation}
\eta = \sum_{k=0}^\infty \frac{1}{(k!)^2} q^k t^{-2k-1} \big(-\log(q) + 2 \sum_{j=1}^k \frac{1}{j} \big) 
\end{equation}
is a multivalued solution of the same equation \eqref{eq:2nd-order} as $\xi$. By forming \eqref{eq:xi-a} with $\beta = h$, one gets exactly the matrix from \eqref{eq:explicit-xi}:
\begin{equation} \label{eq:new-xi}
\Xi = \Psi\begin{pmatrix} 0 & 0 \\ 1 & 0 \end{pmatrix} \Psi^{-1}.
\end{equation}
\end{remark}

%
%in $\Sigma$. The first terms in $q$ and $t$, respectively, are
%\begin{equation} \label{eq:low-order-xi}
%\Sigma = \begin{pmatrix} q t^{p-2} & 0 \\ -t^{p-1}- 2q t^{p-3} & -qt^{p-2} \end{pmatrix} + O(q^2), \quad
%\Sigma = \begin{pmatrix} \quarter q^{(p-1)/2} t & q^{(p+1)/2} \\ q^{(p-1)/2} & -\quarter q^{(p-1)/2} t \end{pmatrix} + O(t^2).
%\end{equation}
% (p-2)!/((p-1)/2!)^2 ((p-3)/2!)^2 q^{(p-1)/2} t^{p-p+1}
% (p-1)! = -1
% (p-2)! = -1/(p-1) = -1/-1 = 1
% ((p-1)/2!)^2 = (-1)^{(p+1)/2}
% ((p-3)/2!)^2 = ((p-1)/2)^{-2} ((p-1)/2!)^2 = (-1)^{(p+1)/2} ((p-1)/2)^{-2}
% so what we have is ((p-1)/2)^2 = 1/4
% For \xi_{21}, we have - (p-1)! / ((p-1)/2!)^4 q^{(p-1)/2} 
% for \xi_{12}, we have (p-1)!/ [ (p+1)/2 (p-1)/2)^{-1} ] q^{(p+1)/2} 
% = - (p-1)/(p+1) = +1

\subsection{}
Following ideas from \cite{wilkins18}, let's look at the following situation:

%**REF(17)**Rephrased assumption because of reviewer's misunderstanding
\begin{assumption} \label{th:2-generates}
The second cohomology group $H^2(M;\bF_p)$ generates $H^*(M;\Lambda)$ as a ring, with the quantum product.
\end{assumption}

This implies that $H^*(M;\bF_p)$ is zero in odd degrees. It also implies that each class in $H^2(M;\bF_p)$ can be lifted to $H^2(M;\bZ)$, as one sees by looking at
\begin{equation}
\cdots \rightarrow H^2(M;\bZ) \rightarrow H^2(M;\bF_p) \rightarrow H^3(M;\bZ) \xrightarrow{p} H^3(M;\bZ) \rightarrow H^3(M;\bF_p) \rightarrow \cdots
\end{equation}
%
% H^3(M;\bZ) -> H^3(M;\bZ) is injective away from p^k-torsion.
% If we look at the subgroup of things that are p^k-torsion for some k, then that is a finite set.
% The multiplication-by-p map is neither injective not surjective, so if that subgroup is nonzero,
% H^3(M;\bF_p) must be nonzero

\begin{lemma} \label{th:2-generates-2}
Suppose that Assumption \ref{th:2-generates} holds. Then, the quantum product and $\mathit{QSt}(b)$, for $b \in H^2(M;\bF_p)$, determine all the quantum Steenrod operations.
\end{lemma}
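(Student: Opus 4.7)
The strategy is to combine the composition formula \eqref{eq:compose-sigma} with the extension rule \eqref{eq:b-variable-extension}, and then extract the desired Steenrod operation by an induction on Novikov valuation.

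First, I would establish that under Assumption \ref{th:2-generates}, each operator $Q\Sigma_b$ for $b \in H^2(M;\bF_p)$, as a $\Lambda[[t,\theta]]$-linear endomorphism of $H^*(M;\Lambda)[[t,\theta]]$, is fully determined by the quantum product and its value $\mathit{QSt}(b) = Q\Sigma_b(1)$. Combining \eqref{eq:pi-relation} with \eqref{eq:pi-divisor} gives the recursion
\[
Q\Sigma_b(a \ast c) \;=\; a \ast Q\Sigma_b(c) \,+\, t\,\partial_a Q\Sigma_b(c), \qquad a \in H^2(M;\bZ),
\]
so, starting from $Q\Sigma_b(1) = \mathit{QSt}(b)$, one obtains $Q\Sigma_b$ on every quantum monomial in lifted $H^2(M;\bZ)$-classes (the lift being available by the discussion preceding the statement). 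By Assumption \ref{th:2-generates}, such monomials $\Lambda$-linearly span $H^*(M;\Lambda)$; $\Lambda[[t,\theta]]$-linearity then extends the computation throughout.

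Given $c \in H^*(M;\bF_p)$, the quotient $H^*(M;\bF_p) = H^*(M;\Lambda)/I_{\mathit{max}}$ inherits the $H^2$-generation, so I can write $c = b_1 \cup \cdots \cup b_k$ for some $b_i \in H^2(M;\bF_p)$. In $H^*(M;\Lambda)$ this becomes
\[
b_1 \ast \cdots \ast b_k \;=\; c \,+\, \sum_{A \neq 0} \gamma_A \, q^A, \qquad \gamma_A \in H^*(M;\bF_p).
\]
Iterating \eqref{eq:compose-sigma} and applying to $1$, the extension \eqref{eq:b-variable-extension} yields
\[
\pm\, Q\Sigma_{b_1}\!\bigl( \cdots Q\Sigma_{b_k}(1) \bigr) \;=\; Q\Sigma_{b_1 \ast \cdots \ast b_k}(1) \;=\; \mathit{QSt}(c) \,+\, \sum_{A \neq 0} q^{pA}\, \mathit{QSt}(\gamma_A),
\]
whose left-hand side is computable by the first step.

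Finally, to isolate $\mathit{QSt}(c)$, I would induct on the Novikov valuation $\omega(B) = \int_B \omega_M$ of the $q^B$-coefficient, uniformly over all classes in $H^*(M;\bF_p)$. Extracting the coefficient of $q^B$ on each side gives
\[
\mathit{QSt}(c)_B \;=\; (\text{known})_B \;-\; \sum_{\substack{A \neq 0 \\ \omega(B - pA) \geq 0}} \mathit{QSt}(\gamma_A)_{B - pA},
\]
and every summand has $\omega(B - pA)$ strictly less than $\omega(B)$. At the base case $B = 0$ the sum is empty and $\mathit{QSt}(c)_0 = \mathit{St}(c)$ is the classical Steenrod operation, which is determined from $\mathit{St}(b_i) = \mathit{QSt}(b_i)_0$ by the Cartan relation \eqref{eq:cartan}. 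The delicate point is that the $\gamma_A$ are typically distinct from $c$, so the induction must run simultaneously over all classes in $H^*(M;\bF_p)$; however, each $\gamma_A$ admits its own $\cup$-product decomposition into $H^2$-classes, and the valuation drops by at least $p\,\min\{\omega(A) : A \neq 0\} > 0$ at each recursive step, so the induction is well-founded and terminates after finitely many steps for each $B$.
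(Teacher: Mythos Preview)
Your argument is correct and follows essentially the same approach as the paper's: both use covariant constancy to determine $Q\Sigma_b$ for $b \in H^2$ from $\mathit{QSt}(b) = Q\Sigma_b(1)$, and then the composition formula \eqref{eq:compose-sigma} to reach $\mathit{QSt}$ on quantum monomials, after which Assumption \ref{th:2-generates} finishes; the paper stops there, whereas you spell out the Novikov-valuation induction needed to isolate $\mathit{QSt}(c)$ for individual $c \in H^*(M;\bF_p)$ from the Frobenius-semilinear extension \eqref{eq:b-variable-extension}. One minor slip: a general $c$ is only a \emph{sum} of cup monomials $b_1 \cup \cdots \cup b_k$, but this is harmless since $c \mapsto \mathit{QSt}(c)$ is additive by Lemma \ref{th:linearity}.
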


\begin{proof}
Write the covariant constancy property as
\begin{equation} \label{eq:rewrite-constancy}
\mathit{Q\Sigma}_b(a \ast c) = t \partial_a \mathit{Q\Sigma}_b(c) + a \ast \mathit{Q\Sigma}_b(c),
\;\; a \in H^2(M;\bZ), \;\; b,c \in H^*(M;\bF_p).
\end{equation}
This shows that $Q\Sigma_b(c)$ and the quantum product determine $Q\Sigma_b(a \ast c)$. 
Therefore, if one knows $\mathit{QSt}(b) = Q\Sigma_b(1)$ and Assumption \ref{th:2-generates-2} holds, the entire operation $Q\Sigma_b$ can be computed from that. By \eqref{eq:compose-sigma},
\begin{equation} \label{eq:so-called-cartan}
\mathit{QSt}(b \ast c) = Q\Sigma_{b}(Q\mathit{St}(c)).
\end{equation}
If we know $\mathit{QSt}(b)$ and $\mathit{QSt}(c)$, for some $b \in H^2(M;\bF_p)$ and $c \in H^*(M;\bF_p)$, then our previous argument determines $Q\Sigma_{b}$, and we can get $\mathit{QSt}(b \ast c)$ from that by \eqref{eq:so-called-cartan}. In view of Assumption \ref{th:2-generates-2}, this implies the desired result.
\end{proof}

Here is a concrete class of examples to which this strategy applies.

\begin{proposition} \label{th:fano}
Suppose that $M$ is a monotone symplectic manifold, satisfying Assumption \ref{th:2-generates}. Then the quantum Steenrod operations can be computed in terms of the quantum product and classical Steenrod operations.
\end{proposition}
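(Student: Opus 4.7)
The plan is to combine Lemma \ref{th:2-generates-2} with the covariant-constancy machinery, using monotonicity to eliminate a possible Novikov ambiguity.

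By Lemma \ref{th:2-generates-2}, it suffices to compute $\mathit{QSt}(b) = Q\Sigma_b(1)$ for $b$ ranging over $H^2(M;\bF_p)$. Fix such a $b$. Theorem \ref{th:covariantly-constant} makes $Q\Sigma_b$ covariantly constant, Lemma \ref{th:classical-sq} identifies its classical ($A=0$) part with cup product by the ordinary Steenrod class $\mathit{St}(b)$, and Lemma \ref{th:up-to-order-p} therefore determines $Q\Sigma_b$ modulo $I_{\mathit{diff}}[[t,\theta]]$ from $\mathit{St}(b)$ together with the quantum product (which encodes the connection). What remains is to exclude any $I_{\mathit{diff}}$-valued contribution to the specific element $\mathit{QSt}(b)$.

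For this I would use monotonicity together with a lattice argument. The class $\mathit{QSt}(b)$ has total degree $2p$, so every Novikov monomial $q^A$ appearing in its expansion satisfies $c_1(A) \leq p$. Under Assumption \ref{th:2-generates}, Poincar\'e duality and the nondegeneracy of the cup-product pairing $H^2 \otimes H^{2n-2} \to H^{2n}$ imply that the integral map $H_2^{\mathit{sphere}}(M;\bZ)/\text{torsion} \to \mathit{Hom}(H^2(M;\bZ),\bZ)$ has saturated image, so a monomial $q^A \in I_{\mathit{diff}}$ forces $A = pA'$ for some $A' \in H_2^{\mathit{sphere}}$. By monotonicity this gives $c_1(A) = p c_1(A') \geq pN$, where $N$ denotes the minimal Chern number; since $N \geq 2$ under our hypotheses (see below), this contradicts $c_1(A) \leq p$. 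Hence no $I_{\mathit{diff}}$ terms can appear in $\mathit{QSt}(b)$, and $\mathit{QSt}(b)$ is completely determined by $\mathit{St}(b)$ and the quantum product; feeding this back into Lemma \ref{th:2-generates-2} yields all other quantum Steenrod operations.

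The main obstacle is the bound $N \geq 2$. For standard examples satisfying Assumption \ref{th:2-generates} (projective spaces, Grassmannians, flag varieties, quadrics, and products of these) it is immediate; more generally, $N = 1$ would force the existence of a sphere class $A_0$ with $c_1(A_0) = 1$, whose appearance in quantum cohomology tends to obstruct $H^*(M)$ being generated multiplicatively by $H^2$ (for instance, Fano threefolds of index one always carry odd-dimensional cohomology). A small cohomological argument of this flavour should handle any remaining edge case; alternatively, the Cartan-type identity in \eqref{eq:so-called-cartan}, applied to $b \ast b$, provides an additional constraint that pins down the at-most-one-dimensional residual ambiguity in $H^0(M;\bF_p)$. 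Everything else is a mechanical consequence of the results already established.
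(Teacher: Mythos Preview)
Your reduction to $\mathit{QSt}(b)$ for $b\in H^2(M;\bF_p)$ via Lemma~\ref{th:2-generates-2}, and the use of covariant constancy to determine $Q\Sigma_b$ modulo $I_{\mathit{diff}}$, are correct and match the paper's strategy. The gap is in how you dispose of the residual $I_{\mathit{diff}}$ ambiguity.

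Your argument hinges on the minimal Chern number satisfying $N\geq 2$, but this already fails for the cubic surface treated in the paper, where the $27$ lines have $c_1=1$; that manifold is monotone and satisfies Assumption~\ref{th:2-generates}. The remarks about Fano threefolds of index one and the unspecified ``Cartan-type constraint'' do not repair this. The lattice saturation detour is also unnecessary: for any generator $q^{A'}$ of $I_{\mathit{diff}}$ one has $c_1\cdot A'\equiv 0\pmod p$ simply because $c_1\in H^2(M;\bZ)$, and monotonicity then gives $c_1(A')\geq p$, hence $c_1(A)\geq p$ for every monomial $q^A\in I_{\mathit{diff}}$. No statement about saturated sublattices or about the value of $N$ is needed for that bound.

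The paper closes the gap differently. Since $\mathit{QSt}(b)$ has total degree $2p$ and (under Assumption~\ref{th:2-generates}) $H^{\mathrm{odd}}(M;\bF_p)=0$, every term has the form $t^j q^A c$ with $j+c_1(A)\leq p$. Combined with $c_1(A)\geq p$ for $q^A\in I_{\mathit{diff}}$, the only possible $I_{\mathit{diff}}$ contribution has $j=0$ and $c_1(A)=p$. But the $t^0\theta^0$ part of $Q\Sigma_b(1)$ is, by \eqref{eq:non-equivariant}, the $p$-fold quantum power $b^{\ast p}$, which is already part of the quantum product data you are allowed to use. So the residual term is not an ambiguity to be excluded; it is directly computable from the quantum product, and the proof goes through with no hypothesis on $N$.
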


\begin{proof}
Take $b \in H^2(M;\bF_p)$. Then $\mathit{QSt}(b)$ has degree $2p$. The monomials in it that can have nonzero coefficients are $t^j q^A$, where $j+c_1(A) \leq p$. The terms with $j = 0$ and $c_1(A) = p$ are part of \eqref{eq:non-equivariant}. The remaining terms are determined by covariant constancy, since any monomial $q^A$ that lies in $I_{\mathit{diff}}$ must necessarily have $c_1(A) \geq p$. Having determined $\mathit{QSt}(b)$, Lemma \ref{th:2-generates-2} does the rest.
\end{proof}

As a concrete illustration, let's consider a cubic surface $M \subset \bC P^3$, which is a del Pezzo surface, and hence a monotone symplectic manifold. For simplicity, instead of the whole Novikov ring, we will work with a single Novikov variable $q$, which counts the Chern number of holomorphic curves. Let's first take coefficients in $\bZ$. Take $h_2$ to be the first Chern class of $M$, and $h_4$ to be the Poincar{\'e} dual of a point. Computations in \cite{crauder-miranda94, difrancesco-itzykson94} show that
\begin{equation} \label{eq:cubic-quantum}
\begin{aligned}
& h_2 \ast h_2 = 3h_4 + 9q\, h_2 + 108q^2, \\
& h_2 \ast h_4 = 36 q^2\, h_2 + 252 q^3.
\end{aligned}
\end{equation}
At one point we will use another class in $H^2(M)$, the Poincar{\'e} dual of a Lagrangian sphere, denoted by $l_2$. This satisfies
\begin{equation} \label{eq:l2-square}
l_2 \ast l_2 = -2h_4 + 4q\, h_2 + 12 q^2.
\end{equation}

\begin{example}
Take the cubic surface with $p = 2$ (this computation is of the same kind as those in \cite{wilkins18}, only expressed in slightly different language). First of all,
\begin{equation} \label{eq:st-degree-2}
\mathit{QSt}(c) = c \ast c + tc \;\; \text{for all $c \in H^2(M;\bF_2)$.}
\end{equation}
A priori, $\mathit{QSt}(c)$ could also have a $tq$ term, which would lie in $H^0(M;\bF_2)$. This would come from classes with $c_1(A) = 1$. To get a nonzero output in $H^0(M;\bF_2)$, one would need to have a stable $A$-curve going through every point of $M$. But each $A$ is represented by a unique embedded $(-1)$-sphere, hence the term must vanish, leaving \eqref{eq:st-degree-2}. 

By combining \eqref{eq:cubic-quantum}, \eqref{eq:st-degree-2}, and \eqref{eq:rewrite-constancy}, one gets
\begin{equation}
\begin{aligned}
& \mathit{QSt}(h_2) = h_4 + (q+t)h_2, \\
& Q\Sigma_{h_2}(h_2) = tq\partial_q \mathit{QSt}(h_2) + h_2 \ast \mathit{QSt}(h_2) = (q+t)h_4 + q^2 h_2, \\
& Q\Sigma_{h_2}(h_2 \ast h_2) = tq\partial_q Q\Sigma_{h_2}(h_2) + h_2 \ast Q\Sigma_{h_2}(h_2) 
= (tq+q^2) h_4 + q^3 h_2. %\\
%& Q\Sigma_{h_2}(h_4) = Q\Sigma_{h_2}(h_2 \ast h_2) - q Q\Sigma_{h_2}(h_2) = 0.
\end{aligned}
\end{equation}
Using \eqref{eq:so-called-cartan}, we get the result announced in Example \ref{th:cubic-surface-mod-2}:
\begin{equation}
\begin{aligned}
& \mathit{QSt}(h_4) = \mathit{QSt}(h_2 \ast h_2 + qh_2) = 
Q\Sigma_{h_2}(Q\mathit{St}(h_2)) + q^2 \mathit{QSt}(h_2) 
\\ & \qquad = Q\Sigma_{h_2}(h_2 \ast h_2 + th_2) + q^2 \mathit{QSt}(h_2)
= t^2 h_4.
\end{aligned}
\end{equation}
\end{example}

\begin{example}
Let's again look at the cubic surface, but now with $p = 3$. Here, the fact that we work with a single Novikov variable $q$ will limit the effectiveness of our computation, leading to an incomplete result.
As explained in Proposition \ref{th:fano}, we can use covariant constancy to determine the quantum Steenrod operations on $H^2(M;\bF_3)$. In the same way, one can compute $Q\Sigma_{b}(c)$ for $b,c \in H^2(M;\bF_3)$ except for the $q^3 t$ term, which lies in $H^0(M;\bF_3)$. We will only describe the outcome (code that carries out this computation is available at \cite{seidel20}):
\begin{equation}
\begin{aligned}
& \mathit{QSt}(h_2) = -t^2 h_2, \\
& \mathit{QSt}(l_2) = -t^2 l_2, \\
& \mathit{Q\Sigma}_{l_2}(l_2) = -t^2 h_4 + (\text{\it term lying in } H^0(M;\bF_3) q^3t).
\end{aligned}
\end{equation}
From that one gets, using \eqref{eq:l2-square},
\begin{equation}
\begin{aligned}
& \mathit{QSt}(h_4) = \mathit{QSt}(l_2 \ast l_2 - q h_2) = Q\Sigma_{l_2}(\mathit{QSt}(l_2)) - q^3 \mathit{QSt}(h_2) \\ & \qquad = t^4 h_4 + q^3t^2 h_2 + (\text{\it term lying in } H^0(M;\bF_3) q^3t^3).
\end{aligned}
\end{equation}
Note that, unlike the $p = 2$ case, $\mathit{QSt}(h_4)$ contains a non-classical (quantum) term.
\end{example}

\subsection{}
We conclude our discussion with a higher-dimensional case: the intersection of two quadrics in $\bC P^5$, which is a monotone symplectic $6$-manifold. Let's first work with $\bZ$-coefficients. The even degree cohomology has a basis $\{1,h_2,h_4,h_6\}$, where the subscript denotes the dimension. There is also odd degree cohomology, $H^3(M;\bZ) = \bZ^4$, but that will play no role in our argument. We can identify the Novikov ring with $\bZ[[q]]$, but since $c_1(M)$ is twice the positive area generator of $H^2(M;\bZ)$, the formal variable $q$ has degree $4$. The quantum product, as computed in \cite{donaldson92}, satisfies
\begin{equation} \label{eq:donaldson}
\begin{aligned}
& h_2 \ast h_2 = 4(h_4 + q), \\
& h_2 \ast h_4 = h_6 + 2 q h_2, \\
& h_2 \ast h_6 = 4q h_4 + 4q^2, \\
& h_4 \ast h_4 = 2q h_4 + 3q^2.
\end{aligned}
\end{equation}

\begin{example}
Taking our intersection of quadrics, let's set $p = 2$. The classical Steenrod operations are
\begin{equation} \label{eq:classical-steenrod-3-fold}
\mathit{Sq}(h_k) = t^{k/2} h_k.
\end{equation}
For $h_2$, this is because $h_2^2 = 0 \in H^4(M;\bF_2)$, which one can read off from the classical term in \eqref{eq:donaldson}. For $h_4$, its Poincar{\'e} dual of is represented by a line $\bC P^1 \subset M$. The normal bundle of that line has first Chern class $0$; by the geometric description of Steenrod squares through Stiefel-Whitney classes, this implies vanishing of $\mathit{Sq}^2(h_4)$.

Since the quantum product with $h_2$ agrees with its classical counterpart, the cup product with any element of $H^*(M;\bF_2)$ is a covariantly constant endomorphism for the quantum connection. From that, Theorem \ref{th:covariantly-constant}, and \eqref{eq:classical-steenrod-3-fold}, one gets
\begin{equation}
\begin{aligned}
& Q\Sigma_{h_2}(c) = th_2 c + \text{(terms lying in $H^k(M;\bF_2)$ with $k < |c|-4$),} \\
& Q\Sigma_{h_4}(c) = t^2 h_4 c + q^2 c + \text{(terms lying in $H^k(M;\bF_2)$ with $k < |c|$).}
\end{aligned}
\end{equation}
Therefore,
\begin{equation}
\begin{aligned}
& \mathit{QSt}(h_2) = th_2, \\
& \mathit{QSt}(h_4) = t^2 h_4 + q^2, \\
& \mathit{QSt}(h_6) = Q\Sigma_{h_2 \ast h_4}(1) = Q\Sigma_{h_2}(Q\mathit{St}(h_4)) %\\
%& \notag \qquad 
= Q\Sigma_{h_2}(t^2h_4 + q^2) = t^3h_6 + q^2t h_2.
\end{aligned}
\end{equation}
\end{example}

\begin{example}
Still for our intersection of quadrics, take $p = 3$. Then, the quantum product and covariant constancy completely determine $Q\Sigma_{h_2}$, for degree reasons (in fact, the same is true for any $p>2$). Explicitly (see again \cite{seidel20} for code), the action on $H^{\mathit{even}}(M;\bF_3)$ is
%The quantum connection on the even degree cohomology is
%\begin{equation} \label{eq:explicit-nabla-threefold}
%\nabla = t q\partial_q + \begin{pmatrix} 
%0 & q & 0 & q^2 \\
%1 & 0 & 2q & 0 \\
%0 & 4 & 0 & 4q \\
%0 & 0 & 1 & 0
%\end{pmatrix}. 
%\end{equation}
\begin{equation}
Q\Sigma_{h_2} = \begin{pmatrix} 
qt & q^2 & -q^2t & q^3 \\
-t^2 & qt & 0 & q^2t \\
0 & -t^2 +q & -qt & q^2 \\
1 & 0 & -t^2 & -qt
\end{pmatrix}.
\end{equation}
From that, we get
\begin{equation}
\begin{aligned}
&
Q\mathit{St}(h_2) = Q\Sigma_{h_2}(1) = qt \, 1 - t^2 \, h_2 + h_6, \\
&
Q\mathit{St}(h_4) = Q\Sigma_{h_4}(1) = Q\Sigma_{h_2*h_2 - q1}(1) = Q\Sigma_{h_2}^2(1) - q^3\,1 \\
\notag &
\qquad =  qt(q + t^2)\, h_2 + (q + t^2)^2\, h_4, \\
&
Q\mathit{St}(h_6) = Q\Sigma_{h_6}(1) = Q\Sigma_{h_2*h_2*h_2}(1) = Q\Sigma_{h_2}^3(1) \\
\notag & 
\qquad = q^2 t (q^2 - q t^2 - t^4)\, 1 + q^2 t^4\, h_2 + qt^3 (q + t^2)\, h_4 + (q^3 - q^2 t^2 + q t^4 - t^6) h_6.
\end{aligned}
\end{equation}
\end{example}

\end{document}